\documentclass[11pt]{amsart}
\usepackage{amsxtra}
\usepackage{amsmath,amsthm,amssymb,enumerate}
\usepackage{setspace}
\addtolength{\topmargin}{-0.6cm}
\addtolength{\textheight}{2.2cm}
\addtolength{\evensidemargin}{-0.6cm}
\addtolength{\oddsidemargin}{-0.6cm}
\addtolength{\textwidth}{2.0cm}
%%%%%%%%%%%%%%%%%%%%%%%%%%%%%%%%%%%%%%%%%%%%%%%%%%%%%%%%%%%%%%%%%%%%%

\newtheorem{theorem}{Theorem}[section]
\newtheorem*{theorem*}{Theorem}
\newtheorem{lemma}[theorem]{Lemma}
\newtheorem*{lemma*}{Lemma}
\newtheorem{example}[theorem]{Example}
\newtheorem*{example*}{Example}
\newtheorem{corollary}[theorem]{Corollary}
\newtheorem*{corollary*}{Corollary}
\newtheorem{definition}[theorem]{Definition}
\newtheorem*{definition*}{Definition}
\newtheorem{remark}[theorem]{Remark}
\newtheorem*{remark*}{Remark}
\newtheorem{notation}[theorem]{Notation}
\newtheorem*{notation*}{Notation}
\newtheorem{proposition}[theorem]{Proposition}
\newtheorem*{proposition*}{Proposition}

\def \g  {\mathfrak{g}}   % Lie algebra letters
\def \h  {\mathfrak{h}}

\def \n  {\mathfrak{n}}

\def \b  {\mathfrak{b}}

\def \sl {\mathfrak{sl}}

\def \t  {\mathfrak{t}}

\def \Cset {{\mathbb C}}
\def \Zset {{\mathbb Z}}
\def \Rset {{\mathbb{R}}}
\def \calA {{\mathcal{A}}}

\def \bfu {{\bf u}}

\def \sL {{\scriptscriptstyle{L}}}
\def \piL {\pi_{\sL}}
\def \sZ {{\scriptscriptstyle{Z}}}
\def \piZ {\pi_{\sZ}}
\def \sQ {{\scriptscriptstyle{Q}}}
\def \piQ {\pi_{\sQ}}

\def \sX {{\scriptscriptstyle{X}}}
\def \piX {\pi_{\sX}}
\def \sY {{\scriptscriptstyle{Y}}}
\def \piY {\pi_{\sY}}
\def \sU {{\scriptscriptstyle{U}}}

\def \pist {\pi_{\rm st}}
\def \Zu {Z_{\bfu}}
\def \la {\langle}
\def \ra {\rangle}
\def \lara {\la \, , \, \ra}
\def \O {\mathcal{O}}
\def \Upu {\Upsilon_{\bfu}}

\def \hs {\hspace{.2in}}
\def \ds {\dot{s}}

\def \pa {\partial}
\def \ga {\gamma}

\def \lal {\lambda_\alpha}
\def \sal {\sqrt{\lambda_\alpha}}
\def \lrw {\longrightarrow}

\def \T {{\mathbb{T}}}
\def \calC {{\mathcal{C}}}

\def \bk {{\bf k}}

\begin{document}
\setlength{\baselineskip}{1.1\baselineskip}

\title[Bott-Samelson varieties and Poisson Ore extensions]
{Bott-Samelson varieties and Poisson Ore extensions}
\author{Bal\'{a}zs Elek}
\address{
Department of Mathematics   \\
Cornell University\\
Ithaca, NY 14853, USA}              
\email{ bazse@math.cornell.edu}
\author{Jiang-Hua Lu}
\address{
Department of Mathematics   \\
The University of Hong Kong \\
Pokfulam Road               \\
Hong Kong}
\email{jhlu@maths.hku.hk}
\date{}
\begin{abstract} 
Let $G$ be a connected complex semi-simple Lie group, and let $Z_{{\bf u}}$ be 
an $n$-dimensional Bott-Samelson variety of $G$, where ${\bf u}$ is any sequence of simple reflections in 
the Weyl group of $G$.  We study the Poisson 
structure $\pi_n$ on $Z_{\bf u}$  defined by 
a standard multiplicative Poisson structure $\pi_{\rm st}$ on $G$.  We explicitly express 
$\pi_n$ on each of the $2^n$ affine coordinate
charts, one for every subexpression of ${\bf u}$, in terms of the root strings and the structure
constants of the Lie algebra of $G$. We show that 
the restriction of $\pi_n$ to each affine coordinate chart gives rise to a Poisson structure on the polynomial algebra 
${\mathbb{C}}[z_1, \ldots, z_n]$ which is an {\it iterated Poisson Ore extension} of $\mathbb{C}$ compatible with a rational action by 
a maximal torus of $G$. For canonically chosen $\pi_{\rm st}$, we show that
the induced Poisson structure on 
${\mathbb{C}}[z_1, \ldots, z_n]$ for every affine coordinate chart is in fact defined over ${\mathbb Z}$,
thus giving rise to 
an iterated Poisson Ore extension of any field ${\bf k}$ of arbitrary characteristic.  The special case of 
$\pi_n$ on the affine chart corresponding to the full subexpression of ${\bf u}$ yields an explicit formula for 
the standard Poisson structures on {\it generalized Bruhat cells} in Bott-Samelson coordinates. The paper 
establishes the foundation on generalized Bruhat cells and sets up the stage for their applications, some of which are
discussed in the Introduction of the paper. 
\end{abstract}

%\begin{abstract}
%Abstrait: 

%\end{abstract}

\maketitle
%\tableofcontents
%%%%%%%%%%%%%%%%%%%%   Introduction   %%%%%%%%%%%%%%%%%%%%%%%%%%%%

\section{Introduction}
\pagenumbering{arabic}
\label{Zw}
\subsection{Introduction}\label{intro-intro}
Poisson Ore extensions were introduced in \cite{Oh} as
Poisson analogs of Ore extensions in the theory of
non-commutative rings. 
Let $\bk$ be any field. A polynomial Poisson $\bk$-algebra 
\[
A = (\bk[z_1, z_2, \ldots, z_n], \;\{\, , \, \})
\]
is called an {\it iterated Poisson Ore extension (of $\bk$)} if for each $1 \leq i \leq n-1$, 
\begin{equation}\label{eq-zzz-0}
\{z_i, \; \bk[z_{i+1}, \ldots, z_n]\} \subset z_i \bk[z_{i+1}, \ldots, z_n] + \bk[z_{i+1}, \ldots, z_n],
\end{equation}
and such an extension is said to be {\it symmetric} if one also has
\[
\{\bk[z_{1}, \ldots, z_{i-1}], \; z_i\} \subset z_i \bk[z_{i}, \ldots, z_{i-1}] + \bk[z_1, \ldots, z_{i-1}], 
\hs 2 \leq i \leq n.
\]
When a split $\bk$-torus $\T$ acts rationally on a polynomial Poisson $\bk$-algebra by Poisson algebra automorphisms, one
may impose a certain compatibility condition between the $\T$-action and the iterations, leading to the notion of
{\it iterated $\T$-Poisson Ore extensions}. A sub-class of iterated $\T$-Poisson Ore extensions are 
the {\it symmetric iterated $\T$-Poisson Ore extensions}, which are also called {\it symmetric Poisson 
Cauchon-Goodearl-Letzter (CGL) extensions} by K. Goodearl and M. Yakimov
\cite{GY:Poi}. The Poisson bracket for a symmetric iterated $\T$-Poisson Ore extension  has the additional property that
\[
\{z_i, \; z_k\} = c_{i,k} z_iz_k + f_{i, k} \;\;\; \mbox{with} \;
c_{i, k} \in \Cset, \;f_{i,k} \in \bk[z_{i+1}, \ldots, z_{k-1}], \;\forall 
1 \leq i < k \leq n.
\]
We refer to $\S$\ref{subsec-Poi-Ore} for the precise definitions of (symmetric) iterated $\T$-Poisson Ore extensions. In the
rest of the Introduction, we will use the terms {\it symmetric iterated $\T$-Poisson Ore extensions} and {\it symmetric 
Poisson CGL extensions} interchangeably. 

Iterated $\T$-Poisson Ore extensions have been studied
in \cite{GL, L-L:prime} for their $\T$-invariant Poisson prime ideals, and are shown to arise 
as semi-classical limits of certain quantum coordinate rings. 
Poisson CGL extensions form 
an axiomized class of Poisson algebras introduced and studied in \cite{GY:Poi}
by K. Goodearl and M. Yakimov as the Poisson analogs of {\it CGL extensions}. 
CGL extensions  were, in turn, introduced
in \cite{LLR:CGL} by S. Launois, T. Lenagan and L. Rigal 
as a class of non-commutative unique factorization domains to which 
the {\it Cauchon deleting derivation theory} \cite{Cauchon} and the {\it Goodearl-Letzter stratification theory} 
\cite{Goodearl-Letzter} apply. 
Poisson CGL extensions and CGL extensions are 
the starting points of the remarkable body of work of K. Goodearl and M. Yakimov on 
classical and quantum cluster algebras related to Lie theory, and especially on the 
classical and quantum Berenstein-Zelevinsky conjectures \cite{Goodearl-Yakimov:PNAS, GY:BZ-conjecture,
GY:AMSbook, GY:Poi}.
In particular, 
it is shown in \cite{GY:Poi} that the structure of a symmetric Poisson CGL extension on the polynomial algebra $A$
gives rise to a cluster algebra structure on $A$ compatible with the Poisson structure in the sense
of Gekhtman-Shapiro-Vainshtein \cite{GSV:book}. Similar results for CGL extensions are obtained in \cite{Goodearl-Yakimov:PNAS,
GY:AMSbook}. 

In this paper, for any connected complex semisimple Lie group $G$ with a maximal torus $T$,
we construct a class of iterated $T$-Poisson Ore extensions and a sub-class of
symmetric Poisson CGL extensions 
associated to Bott-Samelson varieties of $G$. The extensions 
will be initially defined over $\Cset$, but we show that 
the polynomials $\{z_i, z_k\} \in \Cset[z_1, \ldots, z_n]$ all have
integer coefficients, resulting in iterated Poisson Ore extensions of an arbitrary field $\bk$.
 While the main text of the paper 
concentrates on these examples coming from Bott-Samelson varieties, we devote a large part of the Introduction discussing
applications, some of which have already been carried out.

Briefly, associated to any sequence $\bfu = (s_1, s_2, \ldots, s_n)$ of simple reflections in the Weyl group
$W$ of $G$, one has the $n$-dimensional Bott-Samelson variety $\Zu$ and a so-called {\it standard Poisson
structure} $\pi_n$ on $\Zu$. On the other hand, $\Zu$ has a natural atlas consisting of $2^n$ 
coordinate charts, one chart $\O^\gamma$ for each {\it subexpression} $\gamma$ of $\bfu$ (see $\S$\ref{coordcharts}) 
and all parametrized by $\Cset^n$. We prove that the
restriction of $\pi_n$ to {\it each} coordinate chart $\O^\gamma$ gives rise to 
an iterated  $T$-Poisson Ore extension, and for the case of $\gamma = \bfu$, 
a symmetric iterated $T$-Poisson Ore extension.
Moreover, the restriction of $\pi_n$ to $\O^\gamma$ for $\gamma = (e, e, \ldots, e)$ (see $\S$\ref{coordcharts})
is always log-canonical.
More importantly, 
we give explicit formulas for the Poisson structure $\pi_n$ in all the coordinate charts
in terms of the root strings and the structure constants
of the Lie algebra $\g$ of $G$. Such formulas made it possible for the first author to write a computer
program in the GAP  language \cite{GAP4}
which computes the Poisson bracket $\{\, , \, \}_\gamma$ on ${\mathbb{Z}}[z_1, \ldots, z_n]$ for {\it any triple} 
$(G, \bfu, \gamma)$, where $G$ is a connected complex simple Lie group (the results only depend on the
isogeny class of $G$), $\bfu$ is a length $n$ sequence of simple reflections
in the Weyl group of $G$, and $\gamma$ is a subexpression of $\bfu$. 
Some examples, such as when $\g$ is of type $G_2$,  are given in $\S$\ref{subsec-examples}.

The origin of the Poisson structure $\pi_n$ on an $n$-dimensional Bott-Samelson variety $\Zu$ is 
the so-called {\it standard multiplicative Poisson structure} $\pist$ on $G$, and 
the Poisson Lie group $(G, \pist)$ is the semi-classical limit of the much studied quantum group
\cite{CP, dr:quantum, Etingof-Schiffmann}
associated to $G$. Results of this paper have applications to quantum groups, 
integrable systems, cluster algebras, and the algebraic geometry of various important varieties associated to $G$. 
In the next $\S$\ref{subsec-BS-intro} and $\S$\ref{subsec-explicit-intro}, we give more details on the results of the paper.
The rest of the Introduction, $\S$\ref{subsec-quantum} - $\S$\ref{subsec-degeneration}, are devoted to
discussions on applications and future research problems.

%In $\S$\ref{subsec-BS-intro}, we introduce the Poisson structure $\pi_n$ on a Bott-Samelson
%variety $\Zu$ of $G$, where $u = (s_1, s_2, \ldots, s_n)$ is any sequence of simple reflections 
%in the Weyl group of $G$, and we give the outline of the results of the paper. 

\subsection{Bott-Samelson varieties and iterated Poisson Ore extensions}\label{subsec-BS-intro}
Let $G$ again be a connected complex semi-simple Lie group with a fixed Borel subgroup $B$ and a maximal torus $T \subset B$,
and let $\mathfrak{g}$,
${\mathfrak{b}}$, and $\mathfrak{h}$ be the respective Lie algebras of $G$, $B$, and $T$. Let
$\Delta_+\subset \mathfrak{h}^*$ be the set of positive roots determined by $\b$ and 
let $\Gamma\subset\Delta_+$ the corresponding set of simple roots.  
Let $W=N_G(T)/T$ be the Weyl group, where $N_G(T)$ is the normalizer of $T$ in $G$. For $\alpha \in \Gamma$, let
$s_\alpha \in W$ be the corresponding simple reflection. 

Let ${\bfu}=(s_1,s_2,\cdots ,s_n)$ be any sequence of simple reflections in $W$, and for $1\leq i \leq n$, 
let $P_{s_i}=B\cup Bs_iB$, the parabolic subgroup of $G$ containing $B$ that is associated to $s_i$.
Consider the product manifold
$P_{s_1}\times  \ldots \times P_{s_n}$ with the right action of $B^n$  by
\[
(p_1, p_2,\ldots ,p_n) \cdot (b_1,b_2,\ldots, b_n)=(p_1b_1,\, b_1^{-1}p_2b_2,\, \ldots, \, b_{n-1}^{-1}p_nb_n), 
\hs p_i \in P_{s_i}, \, b_i \in B.
\]
The quotient space, denoted by  $\Zu=P_{s_1}\times_B \ldots \times_B P_{s_n}/B$, is the Bott-Samelson variety associated to $\bfu$. For $(p_1, \ldots, p_n)\in P_{s_1} \times \ldots\times P_{s_n}$, let
$[p_1,  \ldots, p_n]\in \Zu$ denote the 
image of $(p_1, \ldots, p_n)$ in $\Zu$.
Multiplication in the group $G$ gives a well-defined map
\begin{equation}\label{eq-mu-intro}
\mu: \;\Zu\longrightarrow G/B:\;\; \mu([p_1, \, p_2,\, \ldots, \, p_n])= p_1p_2\cdots p_nB/B.
\end{equation}
When $\bfu$ is a reduced word, $\mu$ is a resolution of singularities of the Schubert variety 
$\overline{BuB/B}$ in $G/B$, where $u = s_1 s_2\cdots s_n \in W$.
Bott-Samelson varieties have been studied extensively in the literature and play an important role in
geometric representation theory. See, for example, \cite{Brion-flag, BK} and the references therein.

It is well-known (see, for example, \cite[$\S$1.5]{CP} or \cite[$\S$4.4]{Etingof-Schiffmann}) that
the choice of the pair $(B,T)$,
together with that of a symmetric non-degenerate invariant bilinear form $\lara$ on $\g$,
give rise to a {\it standard multiplicative holomorphic Poisson structure} $\pist$, making
$(G, \pist)$ into the {\it standard complex semisimple Poisson Lie group} (see $\S$\ref{subsec-pist}).
 Every parabolic subgroup of $G$ containing $B$ is a Poisson submanifold of $(G, \pist)$.
Consequently, for any sequence $\bfu = (s_1,  \ldots, s_n)$ of simple reflections in $W$, 
the restriction to $P_{s_1}\times\ldots \times P_{s_n} \subset G^n$ of the $n$-fold product Poisson structure
$\pi_{\rm st}^n =\pist \times\ldots \times \pist$ on $G^n$  projects to a well-defined Poisson structure,
denoted by $\pi_n$, on the Bott-Samelson variety $\Zu$ (see $\S$\ref{piBS} for details).
We refer to $\pi_n$ as {\it a standard Poisson structure on $\Zu$}. 

%The complex Poisson manifolds $(\Zu, \pi_n)$, $n \geq 1$, are the objects of study of this paper.

Fixing root vectors $\{e_\alpha: \alpha \in \Gamma\}$ and extending them to a Chevalley basis 
of $\g$, one obtains an
atlas 
\begin{equation}\label{eq-calA}
\calA = \{(\Phi^\gamma: \; \Cset^n \longrightarrow \O^\gamma \subset Z_\bfu): \;\; \gamma \in \Upu\},
\end{equation}
on $Z_\bfu$, where 
$\Upu$ is the set of all the $2^n$ {\it subexpressions} of $\bfu$ (see $\S$\ref{coordcharts}). 
While referring to $\S$\ref{coordcharts} for the precise definition of the
parametrization $\Phi^\gamma: \Cset^n \to \O^\gamma$ for an arbitrary $\gamma \in \Upu$, we point out here that for
$\gamma = \bfu$,
\begin{equation}\label{eq-OU}
\O^\bfu = \varpi(Bs_1B \times Bs_2B \times \cdots \times Bs_nB) \subset \Zu,
\end{equation}
where $\varpi: P_{s_1} \times \cdots \times P_{s_n} \to \Zu$ is the projection. The coordinate chart
$\Phi^\bfu: \Cset^n \to \O^\bfu$ will play a very special role in this paper and for applications of the
results in this paper.

In $\S$\ref{subsec-coor-I}, we give our first formula (Lemma \ref{le-zizj-1})
of the Poisson structure $\pi_n$ in each coordinate chart in terms of certain vector fields on Bott-Samelson
subvarieties of $\Zu$. It is also shown in $\S$\ref{subsec-log} that $\pi_n$ is log-canonical in some of the
coordinate charts. The first main result of the paper is Theorem \ref{th-zizj}, in which we further express the
vector fields in Lemma \ref{le-zizj-1}
in terms of root strings and the structure constants of $\g$. In particular, $\pi_n$ is algebraic in every coordinate chart
$\Phi^\gamma: \Cset^n \to \O^\gamma$.
 
For $\gamma \in \Upu$, 
let $\{\, , \, \}_\gamma$ be the Poisson bracket on the polynomial algebra
$\Cset[z_1, \ldots, z_n]$ defined by $\pi_n$ through the parametrization
$\Phi^\gamma: \Cset^n \to \O^\gamma$.
As consequences of Theorem
\ref{th-zizj}, we prove in $\S$\ref{sec-pi-gamma} the following prominent features of the Poisson polynomial algebras
$(\Cset[z_1, \ldots, z_n], \{\, , \, \}_\gamma)$ for every $\gamma \in \Upu$:

1)  The Poisson polynomial algebra $(\Cset[z_1, \ldots, z_n], \{\, , \, \}_\gamma)$   
is an {\it iterated $T$-Poisson Ore extension of ${\mathbb C}$} and is of the form
\begin{equation}\label{eq-z-f}
\{z_i, \; z_k\}_\gamma = c_{i, k} z_i z_k + b_i(z_k), \hs 1 \leq i < k \leq n,
\end{equation}
where $c_{i, k} \in \Cset$ and $b_i$ is a derivation of $\Cset[z_{i+1}, \ldots, z_k]$.
When $\gamma = \bfu$, the iterated $T$-Poisson Ore extension 
$(\Cset[z_1, \ldots, z_n], \{\, , \, \}_\bfu)$
of $\Cset$ is {\it symmetric}
(and is thus a {\it symmetric Poisson CGL extension} in the terminology of \cite{GY:Poi}), and in particular
\begin{equation}\label{eq-symmetric}
b_i(z_k) \in \Cset[z_{i+1}, \ldots, z_{k-1}], \hs \forall \;\; 1 \leq i < k \leq n. 
\end{equation}
See $\S$\ref{subsec-Poi-Ore} for the precise definitions and Theorem \ref{thm-Ore} for the precise statements.
We will also give more details on $\{\, , \, \}_\bfu$ in $\S$\ref{subsec-explicit-intro}.

2) Choose the bilinear form $\lara$ on $\g$ such that
$\frac{\la \alpha, \alpha\ra}{2} \in {\mathbb Z}$ for each root $\alpha$. Then for any $\gamma \in \Upu$
and $1 \leq i < k \leq n$, the polynomial $\{z_i, z_k\}_\gamma \in \Cset[z_1, \ldots, z_n]$ 
has integer coefficients, so $\{\, , \, \}_\gamma$ defines a Poisson bracket on $\Zset[z_1, \ldots, z_n]$.
Consequently, each $\gamma \in \Upu$ gives rise to an iterated Poisson Ore extension 
$({\bf k}[z_1, \ldots, z_n], \{\, , \, \}_\gamma)$ of any field
${\bf k}$ of arbitrary characteristic. In particular, when the shortest roots $\alpha$ satisfy
$\la \alpha, \alpha \ra = 2$, associated to $\gamma = \bfu$ one then 
has a symmetric Poisson CGL extension of any field ${\bf k}$
with ${\rm char}(k) \neq 2, 3$.
See Theorem \ref{th-Z} and Remark \ref{re-mod-2}.

When $u = (s_1, s_2, \ldots, s_n)$ is a reduced word, the map $\mu$ in \eqref{eq-mu-intro} restricts to an
isomorphism between $\O^\bfu$ to $BuB/B \subset G/B$, where $u = s_1s_2 \cdots s_n$. On the other hand, 
as $B$ is a Poisson Lie subgroup of $(G, \pist)$, the Poisson structure $\pist$ on $G$ projects to a well-defined Poisson structure on $G/B$,
denoted as $\pi_1$ (this notation will become clear in $\S$\ref{subsec-g-cell}), with respect to which $BuB/B$ is a Poisson submanifold \cite{Goodearl-Yakimov:GP}.
It then follows from the 
definition of $\pi_n$ and the multiplicativity of $\pist$ that 
\begin{equation}\label{eq-mu-O}
\mu|_{\O^\bfu}: \;\; (\O^\bfu, \; \pi_n) \longrightarrow (BuB/B, \; \pi_1)
\end{equation}
is a Poisson isomorphism. Referring to the coordinates $(z_1, \ldots, z_n)$ on $\O^\bfu$ as
the {\it Bott-Samelson coordinates on $BuB/B$ (defined by the reduced expression $u = s_1s_2 \cdots s_n$)}
via the isomorphism $\mu|_{\O^\bfu}$, 
Theorem \ref{thm-Ore} then says that the Poisson coordinate ring of $(BuB/B, \pi_1)$ becomes 
a symmetric Poisson CGL extension in the Bott-Samelson coordinates on $BuB/B$. 

In applications, however, it is crucial that we have a symmetric Poisson CGL extension
associated to an arbitrary (i.e., not necessarily reduced) sequence $\bfu = (s_1, s_2, \ldots, s_n)$
of simple reflections. See $\S$\ref{subsec-g-cell} - $\S$\ref{subsec-degeneration}.

\subsection{The explicit formulas in Theorem \ref{th-zizj}}\label{subsec-explicit-intro}
Let again $\bfu = (s_1, s_2, \ldots, s_n)$ be any sequence of simple reflections, not necessarily reduced. 
For $1 \leq j \leq n$, let $\alpha_j$ be the simple root such that
$s_j = s_{\alpha_j}$.
For each subexpression $\gamma$ of $\bfu$,  Theorem \ref{th-zizj} gives the explicit formulas for 
the Poisson brackets $\{z_i , \, z_k\}_\gamma$ among the coordinate functions $(z_1, \ldots, z_n)$ on $\O^\gamma$.
To show the nature of the explicit formulas, and in particular to show how root strings and the 
structure constants of the Lie algebra $\g$ of $G$ appear in the formulas, 
we (taking the risk of burdening the reader with too much detail in the Introduction) now
state the formulas for the case of $\gamma = \bfu$, which will be the most importance case for applications.

\begin{theorem}\label{th-intro}
In the coordinates $(z_1, \ldots, z_n)$ on  $\O^\bfu$, and for $1 \leq i < k \leq n$, one has
\[
\{z_i, \, z_k\}_\bfu = c_{i, k}  z_iz_k + b_i(z_k),
\]
where $c_{i, k}=-\la s_1s_2 \cdots s_{i-1}(\alpha_i), \; s_{1}s_{2} \cdots s_{k-1}(\alpha_k)\ra$, and
$b_i(z_k) \in \Cset[z_{i+1}, \ldots, z_{k-1}]$ is  given as follows:

1) If $k = i+1$, one has $b_i(z_{i+1}) = 0$ if $\alpha_{i+1} \neq \alpha_i$, and
$b_i(z_{i+1}) = -\la \alpha_i, \alpha_i\ra$ if $\alpha_{i+1} = \alpha_i$;

2) Assume that $k > i+1$. For $(j_{i+1}, \ldots, j_{k-1}) \in {\mathbb{N}}^{k-i-1}$
and $i+1 \leq l \leq k-1$, let 
\begin{align*}
\beta_{(j_{i+1}, \ldots, j_l)} &= s_l s_{l-1} \cdots s_{i+2}s_{i+1}(\alpha_i) -
j_{i+1} s_l s_{l-1} \cdots s_{i+2}(\alpha_{i+1}) - \cdots - j_{l-1} s_l(\alpha_{l-1}) - j_l\alpha_l\\
& = s_l(\beta_{(j_{i+1}, \ldots, j_{l-1})})-j_l\alpha_l  \in \h^*,
\end{align*}
where $\beta_{(j_{i+1}, \ldots, j_{l-1})} = \alpha_i$ if $l = i+1$.
Let $J_{i, k} \subset {\mathbb{N}}^{k-i-1}$ be given by
\begin{align*}
J_{{i, k}} &= \{(j_{i+1}, \ldots, j_{k-1}) \in {\mathbb{N}}^{k-i-1}: \; \beta_{(j_{i+1}, \ldots, j_l)} \in \Delta_+
 \; \forall \, i+1 \leq l \leq k-1 \; \mbox{and} \;\\
&\hspace{2in}\;\;  
\beta_{(j_{i+1}, \ldots, j_{k-1})} = \alpha_k\}. 
\end{align*}
If $J_{{i, k}} = \emptyset$, then $b_i(z_k) = 0$. Otherwise,
\begin{equation}\label{eq-bizk}
b_i(z_k) = -\la \alpha_i, \, \alpha_i\ra \sum_{(j_{i+1}, \ldots, j_{k-1}) \in J_{i,k}} c_{j_{i+1}, \ldots, j_{k-1}} z_{i+1}^{j_{i+1}} \cdots z_{k-1}^{j_{k-1}},
\end{equation}
where for  $(j_{i+1}, \ldots, j_{k-1}) \in J_{i,k}$,
\[
c_{j_{i+1}, \ldots, j_{k-1}}= c_{\alpha_{i+1}, \alpha_i}^{s_{i+1}, j_{i+1}} \, 
c_{\alpha_{i+2}, \beta_{(j_{i+1})}}^{s_{i+2}, j_{i+2}} \,\cdots \, 
c_{\alpha_{k-1}, \beta_{(j_{i+1}, \ldots, j_{k-2})}}^{s_{k-1}, j_{k-1}} \neq 0,
\]
and for $i+1 \leq l \leq k-1$, $c_{\alpha_l, \beta_{(j_{i+1}, \ldots, j_{l-1})}}^{s_l, j_l}$ is a certain binomial
coefficient with plus or minus sign, with the binomial coefficient being determined by the $\alpha_l$-string of roots through 
$\beta_{(j_{i+1}, \ldots, j_{l-1})}$ and the plus or minus sign determined by the signs of the structure constants of
$\g$ in the chosen Chevalley basis,  as in \eqref{constants-1}, \eqref{const} and \eqref{phibeta}.
\end{theorem}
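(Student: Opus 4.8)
The plan is to derive the formula for $\{z_i, z_k\}_\bfu$ from the general coordinate-chart formula of Theorem \ref{th-zizj} specialized to $\gamma = \bfu$, which we are entitled to assume. The starting point is Lemma \ref{le-zizj-1}, which expresses $\pi_n$ in each chart in terms of vector fields on Bott-Samelson subvarieties, together with the further reduction of Theorem \ref{th-zizj} expressing those vector fields via root strings and Chevalley structure constants. First I would recall the explicit parametrization $\Phi^\bfu: \Cset^n \to \O^\bfu$ from $\S$\ref{coordcharts}, namely $\Phi^\bfu(z_1,\ldots,z_n) = \varpi(\dot s_1 x_{\alpha_1}(z_1), \ldots, \dot s_n x_{\alpha_n}(z_n))$ or the appropriate variant, where $x_{\alpha_j}$ is the one-parameter unipotent subgroup attached to the simple root $\alpha_j$. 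The leading term $c_{i,k} z_i z_k$ should come from the abelian part of the computation: conjugating the root vector $e_{\alpha_i}$ successively by $\dot s_1, \ldots, \dot s_{i-1}$ turns its weight into $s_1 \cdots s_{i-1}(\alpha_i)$, and the pairing against the corresponding weight $s_1 \cdots s_{k-1}(\alpha_k)$ coming from the $k$-th slot, via the invariant form $\lara$ entering $\pist$, produces exactly $c_{i,k}$. This is the part of the bracket detected already by the log-canonical piece, so I would isolate it first.

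The main work is the lower-order term $b_i(z_k)$. I would trace through how the vector field from Lemma \ref{le-zizj-1} acts on the coordinate $z_k$ once everything is pushed into the chart $\O^\bfu$. The key mechanism is that when one commutes a fixed root vector past the product $\dot s_{i+1} x_{\alpha_{i+1}}(z_{i+1}) \cdots \dot s_{k-1} x_{\alpha_{k-1}}(z_{k-1})$, at each step $l$ one picks up, from the adjoint action of $x_{\alpha_l}(z_l)$, a polynomial in $z_l$ whose coefficients are the $\ad$-nilpotent expansion coefficients — precisely the binomial-type coefficients $c_{\alpha_l, \beta}^{s_l, j_l}$ governed by the $\alpha_l$-root string through the current weight $\beta$, with signs read off from the Chevalley structure constants as in \eqref{constants-1}, \eqref{const}, \eqref{phibeta}. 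Iterating from $l = i+1$ up to $l = k-1$ produces a sum over multi-indices $(j_{i+1}, \ldots, j_{k-1})$, with the running weight $\beta_{(j_{i+1},\ldots,j_l)}$ defined recursively by $\beta_{(j_{i+1},\ldots,j_l)} = s_l(\beta_{(j_{i+1},\ldots,j_{l-1})}) - j_l \alpha_l$; the term survives (i.e. contributes a nonzero root vector rather than landing in the Cartan or outside the root system) exactly when every intermediate $\beta_{(j_{i+1},\ldots,j_l)}$ stays in $\Delta_+$, and it contributes to $\{z_i, z_k\}$ only when the final weight equals $\alpha_k$. This is the content of the index set $J_{i,k}$, and the overall factor $-\la \alpha_i, \alpha_i\ra$ appears because the initial root vector in the $i$-th slot is normalized so that the pairing producing the whole bracket carries that scalar. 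That $b_i(z_k)$ depends only on $z_{i+1}, \ldots, z_{k-1}$, hence lies in $\Cset[z_{i+1}, \ldots, z_{k-1}]$ (establishing symmetry, \eqref{eq-symmetric}), follows because no commutation past the slots $\leq i$ or $\geq k$ can change the $z_k$-linear coefficient: the slots after $k$ do not interact, and the slots before $i+1$ only affect the constant $c_{i,k}$ already accounted for.

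I would organize the proof as: (i) set up the chart and the pushforward of $\pi_n$ from Theorem \ref{th-zizj}; (ii) extract the coefficient of $z_i z_k$ and identify it with $c_{i,k}$ using $W$-equivariance of $\lara$; (iii) handle the base case $k = i+1$ directly, where the only contribution is from the single commutation $x_{\alpha_{i+1}}(z_{i+1})$ acting on $e_{\alpha_i}$, nonzero iff $\alpha_{i+1} = \alpha_i$ (giving $-\la\alpha_i,\alpha_i\ra$) since otherwise $\alpha_i$ is already lowest in its $\alpha_{i+1}$-string and $[e_{\alpha_{i+1}}, e_{\alpha_i}]$ has the wrong weight to meet $\alpha_k = \alpha_{i+1}$; (iv) for $k > i+1$, set up the recursion for $\beta_{(j_{i+1},\ldots,j_l)}$ and the coefficients $c_{\alpha_l,\beta}^{s_l,j_l}$, verify by induction on $l$ that the expansion coefficient of $z_{i+1}^{j_{i+1}} \cdots z_l^{j_l}$ is $c_{\alpha_{i+1},\alpha_i}^{s_{i+1},j_{i+1}} \cdots c_{\alpha_l, \beta_{(j_{i+1},\ldots,j_{l-1})}}^{s_l,j_l}$ times the appropriate root vector of weight $\beta_{(j_{i+1},\ldots,j_l)}$, the inductive step being a single application of the root-string commutation identity; and (v) collect the terms with final weight $\alpha_k$, giving \eqref{eq-bizk}, and note nonvanishing of each $c_{j_{i+1},\ldots,j_{k-1}}$ on $J_{i,k}$ since each factor is a nonzero binomial coefficient (the string conditions guarantee we never hit a zero). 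The hard part will be step (iv): keeping the bookkeeping of which $\ad$-nilpotent expansion one is computing (the action of $\mathrm{Ad}(\dot s_l x_{\alpha_l}(z_l))$ versus $\mathrm{Ad}(x_{\alpha_l}(z_l)\dot s_l)$, and the correct normalization of root vectors across the $s_l$-twists) consistent with the sign conventions \eqref{constants-1}, \eqref{const}, \eqref{phibeta}, so that the signs in the $c_{\alpha_l,\beta}^{s_l,j_l}$ match; the algebra is routine once the conventions are pinned down, but the sign tracking is where errors would creep in, so I would state a clean lemma isolating the single-slot commutation formula and its sign, then feed it into the induction.
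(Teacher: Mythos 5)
Your proposal is correct and follows essentially the same route as the paper: Theorem \ref{th-intro} is obtained by specializing Lemma \ref{le-zizj-1}, Theorem \ref{epsilonbeta} and \eqref{eq-Vi-zk} to $\gamma = \bfu$, with your ``single-slot commutation lemma'' and induction in step (iv) being precisely Lemma \ref{useful-1}, Lemma \ref{recursive-1} and the proof of Theorem \ref{epsilonbeta}. The one point worth making explicit (which your step (iv) only gestures at) is that for $\gamma = \bfu$ every $\gamma_l = s_l$, so the correction term $\psi^\gamma_{i,k}$ in \eqref{psibeta} vanishes identically and \eqref{eq-Vi-zk} reduces to $\sigma_i(z_k) = \phi^\bfu_{i,k}(z_{i+1},\ldots,z_{k-1})$; this is exactly why $b_i(z_k)$ is independent of $z_k$ and lies in $\Cset[z_{i+1},\ldots,z_{k-1}]$.
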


Theorem \ref{th-intro} is extracted from Theorem \ref{th-zizj} and \eqref{eq-Vi-zk}
for the case of $\gamma = \bfu$. Formulas for the Poisson bracket $\{\, , \, \}_\gamma$
for the general case of $\gamma \in \Upsilon_\bfu$
are of similar nature but more involved. We refer to Theorem \ref{th-zizj} for detail.

In the remaining $\S$\ref{subsec-quantum} - $\S$\ref{subsec-degeneration} of the Introduction, we 
discuss applications via quantization and the ubiquitous presence of the so-called
{\it generalized Bruhat cells} in Lie theory (see $\S$\ref{subsec-g-cell} and $\S$\ref{subsec-building-blocks}). 

\subsection{The Poisson analog of the Levendorskii-Soibelman strengthening law for quantum Schubert cells}\label{subsec-quantum}
Consider again the case when $\bfu = (s_1, s_2, \ldots, s_n)$ is reduced, and let
$u = s_1s_2 \cdots s_n \in W$. 
Recall that the Bruhat cell $BuB/B$ is a Poisson submanifold of $G/B$ with respect to the Poisson structure $\pi_1$,
the projection of $\pist$ to $G/B$.  It is well-known that $(BuB/B, -\pi_1)$ is
the semi-classical analog of the quantum Schubert cell $\mathcal{U}^-[u]$ introduced by 
De Concini-Kac-Procesi \cite{DKP} and Lusztig \cite{Lusztig:quantum} (see \cite[Lemma 4.3]{Milen:strata}), and that the coordinates
$(z_1, \ldots, z_n)$ on $\O^\bfu$, now regarded as regular functions on $BuB/B$ via the 
isomorphism $\mu|_{\O^\bfu}: \O^\bfu \cong BuB/B$, are the semi-classical analogs of the Lusztig root vectors
$F_1, \ldots, F_n \in {\mathcal{U}}^-[u]$ (see \cite[$\S$9.2]{GY:AMSbook}). 
Recall \cite[$\S$9.2]{GY:AMSbook} \cite[I.6.10]{Brown-Goodearl:book} that the Lusztig
root vectors $F_1, \ldots, F_n$ satisfy the {\it Levendorskii-Soibelman straightening law}
\[
F_iF_k-q^{\la \beta_i, \beta_k\ra}F_kF_i = \sum_{(j_{i+1}, \ldots, j_{k+1}) \in {\mathbb{N}}^{k-i-1}}
\xi_{j_{i+1}, \ldots, j_{k+1}} F_{i+1}^{j_{i+1}} \cdots F_{k-1}^{j_{k-1}},\hs 1 \leq i < k \leq n,
\]
where $\beta_i = s_1s_2 \cdots s_{i-1}(\alpha_i)$, $\beta_k = s_1s_2 \cdots s_{k-1}(\alpha_k)$,
and $\xi_{j_{i+1}, \ldots, j_{k+1}} \in {\mathbb{Q}}[q, q^{-1}]$ for $(j_{i+1}, \ldots, j_{k+1}) \in {\mathbb{N}}^{k-i-1}$.
Thus the fact that the Poisson bracket
$\{\, , \, \}_\bfu$ is of the form
\[
\{z_i, \, z_k\}_\bfu = - \la \beta_i, \beta_k\ra z_iz_k + b_i(z_k), \hs 1 \leq i < k \leq n,
\]
with $b_i(z_k) \in \Cset[z_{i+1}, \ldots, z_{k-1}]$ is the Poisson analog of the Levendorskii-Soibelman straightening law.
However, while we have expressed all the polynomials $b_i(z_k)$ explicitly in terms of roots strings and structure
constants of the Lie algebra $\g$ in Theorem \ref{th-intro}, there are no similar formulas, as far as we know,
neither for
the subset of all indices $(j_{i+1}, \ldots, j_{k+1})$ for which $\xi_{j_{i+1}, \ldots, j_{k+1}}\neq 0$ nor 
for the elements $\xi_{j_{i+1}, \ldots, j_{k+1}} \in {\mathbb{Q}}[q, q^{-1}]$ themselves
(see, however, \cite[Appendix, (A4)-(A8)]{DP}
for some concrete formulas for the cases of $u$ being the longest elements in $W$ for the rank $2$ groups). 
It would thus be very interesting to
seek for a {\it quantization} of the formulas in Theorem \ref{th-intro} to obtain 
explicit expressions of the Levendorskii-Soibelman straightening law, and see in particular how the 
$q$-analogs of the binomial 
coefficients in Theorem \ref{th-intro} may appear such formulas. Partial results in this direction have been 
obtained in  \cite{Mi:thesis}.

\subsection{Symmetric Poisson CGL extensions through generalized Bruhat cells}\label{subsec-g-cell} 
With the notation as in $\S$\ref{intro-intro}, for any integer $n \geq 1$, let $B^n$ act on $G^n$ by 
\[
(g_1, g_2\ldots g_n) \cdot (b_1,b_2,\ldots, b_n)=(g_1b_1,\, b_1^{-1}g_2b_2,\, \ldots, \, b_{n-1}^{-1}g_nb_n), 
\hs g_i \in G, \, b_i \in B,
\]
and denote the quotient manifold by
\begin{equation}\label{eq-Fn}
F_n = G \times_B \cdots \times_B G/B.
\end{equation}
It is shown in \cite[$\S$7.1]{Lu-Mou:mixed} (see also $\S$\ref{piBS}) that the 
$n$-fold product Poisson structure $\pi_{\rm st}^n$ on $G^n$ projects to a well-defined Poisson structure on $F_n$, which
will also be denoted by $\pi_n$. Note that for any sequence $\bfu = (s_1, \ldots, s_n)$ of
simple reflections in $W$, the Bott-Samelson variety $\Zu$ 
is isomorphic to a closed submanifold of $F_n$ under the embedding $P_{s_1} \times \cdots \times P_{s_n} \subset G^n$.
As $P_{s_1} \times \cdots \times P_{s_n}$ is a Poisson submanifold of $G^n$   with respect to
$\pi_{\rm st}^n$, it follows from the definitions that $\Zu$, with the Poisson structure $\pi_n$ defined in 
$\S$\ref{intro-intro}, is a Poisson submanifold of
$(F_n, \pi_n)$. Note also that the $T$-action on the first factor of $G^n$ by left translation descends to 
a $T$-action on $F_n$ preserving $\pi_n$.

For an arbitrary sequence $\bfu = (u_1, \ldots, u_n)$ in the Weyl group $W$, where the $u_i$'s 
are not necessarily simple reflections,
the image of $B u_1B \times \cdots \times Bu_nB$ in $F_n$, denoted by
\[
B \bfu B/B = (B u_1 B)\times_B \cdots \times_B (B u_n B)/B \subset F_n,
\]
 is called 
a {\it generalized Bruhat cell} in \cite[$\S$1.3]{Lu-Mou:flags}. It follows from the Bruhat decomposition
$G = \bigsqcup_{u \in W} BuB$ of $G$ that one has the decomposition
\begin{equation}\label{eq-BuB}
F_n = \bigsqcup_{\bfu \in W^n} B\bfu B/B
\end{equation}
of $F_n$ into the disjoint union of generalized Bruhat cells.
As each $BuB$, where $u \in W$, is a Poisson submanifold of $G$ with respect to $\pist$, each generalized Bruhat cell
$B \bfu B/B$ is a $T$-invariant Poisson submanifold of $(F_n, \pi_n)$.

A generalized Bruhat cell of the form 
$B (s_1, \ldots, s_n) B/B \subset F_n$, where  
each $s_i$ is a simple reflection, is said to be of 
{\it Bott-Samelson type} \cite[$\S$1.3]{Lu-Mou:flags}. 
In the notation of the current paper, a generalized Bruhat cell $B (s_1, \ldots, s_n) B/B$ in $F_n$ of Bott-Samelson type
is nothing but the affine chart $\O^\bfu$ in the Bott-Samelson variety
$\Zu \subset F_n$, where $\bfu = (s_1, \ldots, s_n)$. See \eqref{eq-OU}.
Given an arbitrary $\bfu = (u_1, \ldots, u_n) \in W^n$, 
choose any reduced decomposition $u_i = s_{i, 1} s_{i, 2} \cdots s_{i, l(u_i)}$ for each $u_i$, where 
$l: W \to {\mathbb{N}}$ is the length function of 
$W$, and consider the sequence 
\[
\tilde{\bfu} = (s_{1, 1}, \; \ldots, \; s_{1, l(u_1)}, \; s_{2, 1}, \; \ldots, \; s_{2, l(u_2)}, \; \ldots,
s_{n, 1}, \; \ldots, \; s_{n, l(u_n)})
\]
of simple reflections of length $l(\bfu) =l(u_1) + \cdots + l(u_n)$. Then the
multiplication map on $G$ induces a $T$-equivariant Poisson isomorphism
\begin{equation}\label{eq-iso}
(Z_{\tilde{\bfu}}, \, \pi_{l(\bfu)}) \supset (\O^{\tilde{\bfu}}, \, \pi_{l(\bfu)}) =
(B \tilde{\bfu} B/B, \, \pi_{l(\bfu)}) \;\stackrel{\sim}{\lrw} \;
(B \bfu B/B, \,\pi_n) \subset (F_n, \, \pi_n)
\end{equation}
(see \cite[$\S$1.3]{Lu-Mou:flags}). Thus, as Poisson manifolds, every generalized Bruhat cell $B \bfu B/B$ 
is Poisson isomorphic to one of Bott-Samelson type. We will refer to 
the coordinates $(z_1, \ldots, z_{l(\bfu)})$ 
on $\O^{\tilde{\bfu}}=B \tilde{\bfu} B/B$ as defined in $\S$\ref{coordcharts} of the present paper as 
{\it Bott-Samelson coordinates} on $B \bfu B/B$ via the isomorphism in \eqref{eq-iso}.
Theorem \ref{thm-Ore} then immediately leads to the following conclusion on generalized Bruhat cells.

\begin{theorem}\label{th-g-cell}
For any generalized Bruhat cell $B \bfu B/B$, where $\bfu =(u_1, \ldots, u_n) \in W^n$,
the standard Poisson structure on $B \bfu B/B$ makes its coordinate ring into a symmetric Poisson
CGL extension in any Bott-Samelson coordinates $(z_1, \ldots, z_{l(\bfu)})$ defined through the isomorphism in
\eqref{eq-iso}; the corresponding
Poisson bracket on $\Cset[z_1, \ldots, z_{l(\bfu)}]$ is explicitly given in Theorem \ref{th-intro}.
\end{theorem}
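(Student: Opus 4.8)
The plan is to deduce Theorem \ref{th-g-cell} from the already-stated results, essentially as a matter of transport of structure. First I would recall the setup: given $\bfu = (u_1, \ldots, u_n) \in W^n$ and fixed reduced decompositions $u_i = s_{i,1}\cdots s_{i,l(u_i)}$, we form the length-$l(\bfu)$ sequence $\tilde{\bfu}$ of simple reflections as in the excerpt, so that $\tilde{\bfu}$ is a sequence of simple reflections to which all of $\S$\ref{subsec-BS-intro} and Theorem \ref{thm-Ore} apply directly. In particular, by Theorem \ref{thm-Ore} applied to the sequence $\tilde{\bfu}$ and the subexpression $\gamma = \tilde{\bfu}$, the Poisson polynomial algebra $(\Cset[z_1, \ldots, z_{l(\bfu)}], \{\,,\,\}_{\tilde{\bfu}})$ coming from $(\O^{\tilde{\bfu}}, \pi_{l(\bfu)})$ via the chart $\Phi^{\tilde{\bfu}}$ is a symmetric iterated $T$-Poisson Ore extension of $\Cset$, i.e.\ a symmetric Poisson CGL extension, and its bracket is given explicitly by the formulas of Theorem \ref{th-intro} (with the roles of indices relabeled according to $\tilde{\bfu}$).

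Next I would invoke the $T$-equivariant Poisson isomorphism \eqref{eq-iso}, namely $\mu$ induces $(\O^{\tilde{\bfu}}, \pi_{l(\bfu)}) \stackrel{\sim}{\lrw} (B\bfu B/B, \pi_n)$, citing \cite[$\S$1.3]{Lu-Mou:flags}. Since the notion of (symmetric) iterated $T$-Poisson Ore extension is defined purely in terms of the Poisson bracket and the $T$-action on the polynomial algebra, and since a Poisson-and-$T$-equivariant isomorphism of affine spaces transports one such structure to another, the pullback of the coordinate ring of $(B\bfu B/B, \pi_n)$ along this isomorphism is exactly $(\Cset[z_1, \ldots, z_{l(\bfu)}], \{\,,\,\}_{\tilde{\bfu}})$. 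By definition, the Bott-Samelson coordinates $(z_1, \ldots, z_{l(\bfu)})$ on $B\bfu B/B$ are precisely the coordinates $z_i$ on $\O^{\tilde{\bfu}}$ transported through \eqref{eq-iso}, so the Poisson bracket on $\Cset[z_1, \ldots, z_{l(\bfu)}]$ computed in these coordinates coincides with $\{\,,\,\}_{\tilde{\bfu}}$. This immediately gives both assertions: the coordinate ring is a symmetric Poisson CGL extension, and its bracket is the one displayed in Theorem \ref{th-intro}.

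The only subtlety worth spelling out — and the step I expect to be the main (though still mild) obstacle — is checking that the structural axioms in question are genuinely preserved under the isomorphism \eqref{eq-iso} with no loss: one must confirm that the ordering of the coordinates $z_1, \ldots, z_{l(\bfu)}$ inherited from $\tilde{\bfu}$ is exactly the ordering with respect to which the Poisson Ore iteration \eqref{eq-zzz-0} and its symmetric counterpart hold, and that the rational $T$-action used on the $B\bfu B/B$ side (the one from left translation on the first factor of $G^n$, which descends to $F_n$ and restricts to $B\bfu B/B$) matches, under the isomorphism, the $T$-action used in Theorem \ref{thm-Ore} on $\O^{\tilde{\bfu}}$. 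Both of these are built into the statement of \eqref{eq-iso} as a $T$-equivariant Poisson isomorphism, so strictly speaking nothing new needs to be proved; I would simply remark that the claim is then a direct consequence of Theorem \ref{thm-Ore} together with \eqref{eq-iso}, and note that the explicit bracket formula is Theorem \ref{th-intro} read off in the relabeled indices coming from $\tilde{\bfu}$.
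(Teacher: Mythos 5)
Your proposal is correct and matches the paper's treatment: the paper presents Theorem \ref{th-g-cell} as an immediate consequence of Theorem \ref{thm-Ore} (applied to $\tilde{\bfu}$ with $\gamma=\tilde{\bfu}$) transported through the $T$-equivariant Poisson isomorphism \eqref{eq-iso}, with no further argument given. Your added remarks on the compatibility of the coordinate ordering and the $T$-actions are exactly the points implicitly absorbed into the statement of \eqref{eq-iso}, so nothing is missing.
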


We also point out that for an arbitrary generalized Bruhat cell $B\bfu B/B$, where $\bfu = (u_1, u_2, \ldots, u_n) \in W^n$,
the $T$-orbits of symplectic leaves of $\pi_n$ in $B \bfu B/B$, also called {\it $T$-leaves}, are described in
\cite[Theorem 1.1]{Lu-Mou:flags}. Namely, the $T$-leaves of $\pi_n$ in $B \bfu B/B$ are 
precisely all the submanifolds of $B \bfu B/B$ of the form 
\[
R^\bfu_w = \{[g_1, g_2, \ldots, g_n] \in B \bfu B/B: \; g_1g_2\cdots g_n \in B_-wB\},
\]
where $w \in W$, and $w \leq u_1 \ast u_2 \ast \cdots \ast u_n$, with $\ast$ being the monoidal product
on $W$. Here $B_-$ is the Borel subgroup of $G$ such
that $B \cap B_- = T$. Moreover, the dimension of every symplectic leaf of $\pi_n$ in $R^\bfu_w$ is shown in 
\cite[Theorem 1.1]{Lu-Mou:flags} to be 
equal to 
\[
l(\bfu) - l(w) -\dim {\rm ker}(1+u_1u_2 \cdots u_n w^{-1}),
\]
where $1+u_1u_2 \cdots u_n w^{-1}$ denotes
the linear operator on the Lie algebra $\h$ of $T$ given by $x \mapsto x + u_1u_2 \cdots u_n w^{-1}(x)$, $x \in \h$.
The {\it leaf-stabilizer subalgebra of $\h$} in $R^\bfu_w$ is explicitly described in \cite[Theorem 1.1]{Lu-Mou:flags}.
We refer to \cite[Theorem 1.1]{Lu-Mou:flags} for more detail. 

We regard Theorem \ref{th-g-cell} and the 
description of their $T$-leaves in \cite[Theorem 1.1]{Lu-Mou:flags} 
as two basic results on the standard Poisson structures on generalized Bruhat cells.
These two results set the foundation for 
applications of generalized Bruhat cells, some of which will be discussed in the remainder of the Introduction.

In the next $\S$\ref{subsec-BS-coor-Guv}, we describe an application from \cite{Lu-Mi:integrable} of Theorem \ref{th-g-cell} to
the Kogan-Zelevinsky integrable systems on double Bruhat cells. 
In $\S$\ref{subsec-building-blocks}, we explain how 
generalized Bruhat cells
are building blocks for many of the Poisson varieties related to the 
Poisson Lie group $(G, \pist)$, and how, as a result, important varieties in Lie theory may be regarded as being
{\it glued together by symmetric Poisson CGL extensions} constructed out of generalized Bruhat cells. In $\S$\ref{subsec-mutation},
 we discuss the implications
of such gluings in the context of mutations of iterated Poisson Ore extensions and cluster algebras.
To give evidence that symmetric Poisson CGL extensions constructed in this paper may be intimately related to
algebraic geometry, we explain in 
$\S$\ref{subsec-degeneration} a result from \cite{Lu-Peng:degeneration, Peng:thesis} that 
connects, for any Bott-Samelson variety $\Zu$, the symmetric Poisson CGL extension
$(\Cset[z_1, z_2, \ldots, z_n], \{\, , \, \}_\bfu)$
with toric degenerations of $\Zu$ through tropical geometry.

\subsection{Bott-Samelson coordinates on double Bruhat cells}\label{subsec-BS-coor-Guv}
Recall \cite{FZ:total}  that double Bruhat cell in $G$ are defined as 
\[
G^{u, v} = (B u B) \cap (B_-vB_-),
\]
where $u, v \in W$. Fomin and Zelevinsky introduced in \cite{FZ:total} certain regular functions on $G^{u, v}$, called
{\it twisted generalized minors}, which play crucial roles in the theory of total positivity and cluster
algebra structures \cite{BZ} on the coordinate ring of $G^{u, v}$. Moreover, for the case of $u = v$, Kogan and Zelevinsky
introduced in \cite{KZ} an integrable system on the Poisson manifold $(G^{u, u}, \pist)$ formed by some twisted generalized minors on $G^{u, u}$.
Generalizing results of M. Gekhtman and M. Yakimov \cite{Misha-Milen} for the case of $SL(n)$,
it is proved in \cite{Lu-Mi:integrable} that the Hamiltonian vector fields 
of
all the Fomin-Zelevinsky twisted generalized minors on every $G^{u, v}$ 
are {\it complete} in the sense
that all of their integral curves are defined on the whole of $\Cset$. Consequently 
all the Hamiltonian flows of the Kogan-Zelevinsky integrable system 
on each $G^{u, u}$ are defined on the whole of $\Cset$, much in the same spirit as the results of
B. Kostant and N. Wallach in  \cite{KW1, KW2} on the completeness of the Hamiltonian flows of the complex Gelfand-Tseitlin 
system on the space of all $m \times m$ complex matrices.

The main tools used in \cite{Lu-Mi:integrable} are the generalized Bruhat cell
$B(v^{-1}, u)B/B$ and a certain open Poisson embedding
\begin{equation}\label{eq-FZ-embedding}
F^{u, v}: \;\;\; (G^{u, v}, \; \pist) \longrightarrow (T \times (B(v^{-1}, u)B/B),\; \, 0 \bowtie \pi_2),
\end{equation}
called the 
{\it Fomin-Zelevinsky embedding}, where $0 \bowtie \pi_2$ is the sum of the
product Poisson structure $0 \times \pi_2$ and a {\it mixed term} defined using the
$T$-action on $B(v^{-1}, u)B/B$ by left translation. Defining {\it Boot-Samelson coordinates}
on $G^{u, v}$ to be the combination of any (algebraic) coordinates on $T$ and Bott-Samelson coordinates on
$B(v^{-1}, u)B/B$
(defined using reduced words for $u$ and $v$ as in $\S$\ref{coordcharts} of the present paper), it is shown in \cite{Lu-Mi:integrable} that 
all the Fomin-Zelevinsky twisted generalized minors on $G^{u, v}$ become certain distinguished
polynomials in the Bott-Samelson coordinates, and the fact that the Poisson structure $\pi_2$ on 
$B (v^{-1}, u)B/B$ is a symmetric iterated $T$-Poisson Ore extension is used in an essential way in 
\cite{Lu-Mi:integrable} to 
prove that the Hamiltonian flows of the distinguished polynomials are complete.
The Kogan-Zelevinsky integrable systems are also generalized 
in \cite{Lu-Mi:integrable} to the setting of
arbitrary generalized Bruhat cells.

\subsection{Generalized Bruhat cells as building blocks}\label{subsec-building-blocks}
It is a fundamental fact that the Poisson Lie group
$(G, \pist)$ is {\it generated by} the $3$-dimensional Poisson subgroups $SL_\alpha(2, \Cset)$, one for
each simple root $\alpha$ (see \cite[$\S$2.2]{KZ} for a precise statement). This salient feature of 
$(G, \pist)$ makes it possible to build, through actions of its Poisson subgroup $(B, \pist)$, 
coordinates on certain Poisson manifolds 
by a {\it one simple root each time} procedure, resulting in the Poisson structure being 
constructed {\it one 
coordinate each time} in the fashion of \eqref{eq-zzz-0}. 
Lemma \ref{le-Z-Pi-1}  of this paper
gives a precise statement to this effect
on building Poisson structures on ${\mathbb{P}}^1$-extensions, of which Bott-Samelson varieties
are prototypical examples.
Indeed, Lemma \ref{le-Z-Pi-1} is the key to the appearance of iterated Poisson Ore extensions associated to
the affine coordinate charts on Bott-Samelson varieties. 

More precisely, we say that an $n$-dimensional complex 
algebraic Poisson manifold $(P, \pi)$ is an
iterated Poisson Ore extension (of a point) (resp. a symmetric Poisson CGL extension (of a point))
if there exists an isomorphism $P \cong \Cset^n$ through which the Poisson coordinate ring of
$(P, \pi)$ becomes an iterated Poisson Ore extension (resp. a symmetric Poisson CGL extension) of $\Cset$.
A Poisson manifold $(X, \piX)$ is {\it paved (resp. covered) by iterated Poisson Ore extensions}
if it is the disjoint union of (resp. has an open cover by) iterated Poisson Ore extensions.
Theorem \ref{thm-Ore} in this paper, then, says that every Bott-Samelson variety $\Zu$ with the Poisson
structure $\pi_n$ is {\it covered} by iterated Poisson Ore extensions. The
decomposition in 
\eqref{eq-BuB} says that the Poisson manifold $(F_n, \pi_n)$ is {\it paved} by symmetric Poisson CGL extensions, namely
by generalized Bruhat cells $B\bfu B/B \subset F_n$.

Consider now the Poisson manifold $(F_1, \pi_1) = (G/B, \pi_1)$ and the finite open cover
\[
G/B = \bigcup_{u \in W} uB_-B/B.
\]
In their study in \cite{KWY} of singularities of Richardson varieties in $G/B$,
 A. Knutson, A. Woo, and A. Yong introduced, for each $u \in W$, an isomorphism
\[
K^u: \;\; uB_-B/B \longrightarrow (Bw_0uB/B) \times (BuB/B) \subset (G/B) \times (G/B),
\]
where $w_0$ is the longest element in $W_0$, 
which they then use to express singularities of Richardson varieties in $G/B$ in terms of that of
Schubert varieties in $(G/B) \times (G/B)$. Equip 
both $Bw_0uB/B$ and $BuB/B$ with the Poisson structure $\pi_1$, it is shown in \cite{Yu:thesis} that
$K^u$ is a Poisson isomorphism from the open Poisson submanifold $uB_-B/B$ of $(G/B, \pi_1)$ to 
the {\it mixed product Poisson manifold}
\[
((Bw_0uB/B) \times (BuB/B), \; \, \pi_1 \bowtie \pi_1),
\]
where $\pi_1 \bowtie \pi_1$ denotes, again,  the sum of the product Poisson structure $\pi_1 \times \pi_1$ with a 
mixed term defined using the $T$-actions on $Bw_0uB/B$ and $BuB/B$ by left translation. 
Consequently, $((Bw_0uB/B) \times (BuB/B), \, \pi_1 \bowtie \pi_1)$ is a symmetric Poisson CGL extension, built
as a {\it $T$-mixed product of two Bruhat cells}. The Poisson manifold $(G/B, \pi_1)$ is thus  
{\it covered} by symmetric Poisson CGL extensions. Similarly, for $G/T$ with the Poisson structure
$\pi_{\rm st}^\prime$ that is the projection of $\pist$ to $G/T$, it is shown in 
\cite{Yu:thesis} that $(G/T, \pi_{\rm st}^\prime)$ is covered by {\it $T$-mixed products of
generalized Bruhat cells} of 
the form 
\[
((Bw_0uB/B) \times (B(u, w_0)B/B), \; \, \pi_1 \bowtie \pi_2).
\]
We refer to \cite{Yu:thesis} for more details and for other examples of Poisson homogeneous spaces of
the Poisson Lie group $(G, \pist)$ that are covered by $T$-mixed products of generalized Bruhat cells.

\subsection{Mutations and gluings of iterated Poisson Ore extensions and cluster algebras}\label{subsec-mutation}
Consider an $n$-dimensional smooth Poisson variety
$(Z, \pi)$ with a (finite) cover by iterated Poisson Ore extensions, as is the case for Bott-Samelson
varieties and for $(G/B, \pi_1)$ and $(G/T, \pi_{\rm st}^\prime)$ discussed in $\S$\ref{subsec-building-blocks}.
We can then regard $(Z, \pi)$ as being {\it glued together by finitely many iterated Poisson Ore extensions}, called
{\it Poisson Ore charts}.
On the other hand, the changes of coordinates 
between these coordinate charts are, in general, very nontrivial birational maps from $\Cset^n$ to itself.
See Example \ref{ex-e-beta} for an example for Bott-Samelson varieties.
It seems a miracle that such highly complicated birational maps from $\Cset^n$ to $\Cset^n$
in fact 
transform one iterated Poisson Ore extension to another (see \cite[Appendix A]{Balazs:thesis} for 
some direct computations related to Example \ref{ex-e-beta}). One may thus ask: are the 
changes of coordinates between the Poisson Ore charts on $Z$ compositions of some simpler 
{\it one-step mutations of iterated Poisson Ore extensions}? Can one 
start with some particular {\it Poisson Ore seed} on $(Z, \pi)$ and obtain all the other Poisson Ore charts
on $Z$ through finitely many steps of mutations? 

Note first
that although similar in spirit, the changes of coordinates between the Poisson Ore
charts on $Z$ are not cluster transformations as
in the theory of cluster algebras. Indeed, a mutation in our sense produces a new  iterated Poisson Ore extension
from  an old one, while those in cluster algebra theory mutate between polynomial Poisson algebras with {\it log-canonical
Poisson brackets} \cite{GSV:book}. 

On the other hand, results in this paper on Bott-Samelson varieties provide testing ground for the questions above.
Indeed, for a Bott-Samelson variety $\Zu$, regarded as being glued together by the $2^n$ coordinate charts in the
atlas $\calA$ in \eqref{eq-calA}, it is not hard to see that one can connect any coordinate chart $\O^\gamma$ with the 
special chart $\O^\bfu$ by a chain of {\it adjacent} charts, and that the changes of coordinates between adjacent charts
are determined by the $T$-action and the vector fields $\sigma_i$ in \eqref{eq-Vi} on sub-Bott-Samelson varieties of $\Zu$.
On the other hand, by \eqref{eq-zizj-1} in Lemma \ref{le-zizj-1} and
\eqref{eq-zizj-th} in 
Theorem {\ref{th-zizj}}, the $T$-action and the vector fields $\sigma_i$ can be read off 
from the symmetric Poisson CGL extension 
$(\Cset[z_1, \ldots, z_n], \{\, , \, \}_\bfu)$. This 
suggests that 
{\it all the information needed to build up $\Zu$ as an algebraic variety by 
gluing $2^n$ copies of $\Cset^n$ is encoded in the symmetric Poisson CGL extension 
$(\Cset[z_1, \ldots, z_n], \{\, , \, \}_\bfu)$}. While such a statement is not precise, we will 
explain in $\S$\ref{subsec-degeneration} a precise statement from \cite{Lu-Peng:degeneration, Peng:thesis}
that relates degenerations of the Poisson algebra $(\Cset[z_1, \ldots, z_n], \{\, , \, \}_\bfu)$ with toric
degenerations of $\Zu$.

For Poisson varieties $(Z, \pi)$ that can be covered by symmetric Poisson CGL extensions, such as $(G/B, \pi_1)$ and
$(G/T, \pi_{\rm st}^\prime)$ discussed in $\S$\ref{subsec-building-blocks}, one may consider the
cluster algebra structure on each Poisson Ore chart defined by
the corresponding symmetric Poisson CGL extension using the
Goodearl-Yakimov theory \cite{GY:Poi} and
ask how they glue together to give some {\it global cluster structure} on $Z$.
This project will be taken up in the future. We point out for now that the
explcit result from this paper on the symmetric Poisson CGL extensions coming from generalized Bruhat cells
will be crucial for such a project.

\subsection{Poisson and toric degenerations of Bott-Samelson varieties}\label{subsec-degeneration}
Suppose that $Z$ is an $n$-dimensional smooth projective variety over $\Cset$ equipped
with a finite algebraic atlas of the form
\[
\calA = \{(\phi^\gamma: \Cset^n \longrightarrow \phi^\gamma(\Cset^n) \subset Z:\; \gamma \in \Upsilon\}.
\]
One can then {\it degenerate the pair $(Z, \calA)$}, i.e., degenerate the variety $Z$ by
rescaling the coordinates in the charts in $\calA$. Such a theory is developed in 
\cite{Lu-Peng:degeneration, Peng:thesis}, where the degenerations
are expressed in terms of the  {\it tropicalizations} and the {\it initial forms}
of the birational maps from $\Cset^n$ to $\Cset^n$ which are the 
changes of coordinates of the atlas $\calA$. In particular, one may require the degeneration to be toric, i.e.,
the central fiber be a toric variety. It is shown in \cite{Lu-Peng:degeneration, Peng:thesis}
that one can associate a cone $\calC_{\rm toric}(Z, \calA)$ in $\Rset^n$,
called the {\it toric degeneration cone of $(Z, \calA)$}, whose set of integral points parametrizes all the
toric degenerations of $(Z, \calA)$ up to equivalences. 

Turning to the Bott-Samelson variety $Z_\bfu$ with the atlas $\calA$ in \eqref{eq-calA}, the toric degeneration
cone $\calC_{\rm toric}(Z_\bfu, \calA)$, as well as all toric degenerations of $(Z_\bfu, \calA)$, are described explicitly in \cite{Lu-Peng:degeneration, Peng:thesis}. In particular, the zero fiber of
every toric degeneration is shown to be always isomorphic to the {\it Bott-Tower} 
$Z_\bfu^0$, which is a smooth projective toric variety introduced by M. Grossberg and Y. Karshon \cite{GK:BTower}
in their study of extended characters of representations of compact Lie groups. Closely related
 to this paper is the statement proved in \cite{Lu-Peng:degeneration, Peng:thesis} that
\begin{equation}\label{eq-C-C}
\calC_{\rm toric}(Z_\bfu, \calA) \cong \calC_{{\rm log-can}}(\Cset[z_1, z_2, \ldots, z_n], \{\, , \, \}_\bfu),
\end{equation}
by an element in $GL(n, \mathbb{Z})$, 
where $\calC_{{\rm log-can}}(\Cset[z_1, z_2, \ldots, z_n], \{\, , \, \}_\bfu)$ is the {\it log-canonical degeneration cone}
of the polynomial Poisson algebra $(\Cset[z_1, z_2, \ldots, z_n], \{\, , \, \}_\bfu)$.
Here the notion of the 
log-canonical degeneration cone for 
a polynomial Poisson algebra is introduced in \cite{Anton-I} by A. Alexseev and I. Davydenkova in their study of
degenerations of polynomial Poisson brackets to their log-canonical terms. The isomorphism in \eqref{eq-C-C}
essentially says that the ``directions" in which the Bott-Samelson
variety $\Zu$ can be degenerated to a toric variety via coordinate rescalings are 
the same as those in which the Poisson polynomial algebra $(\Cset[z_1, \ldots, z_n], \{\,, \, \}_\bfu)$
can be degenerated to its log-canonical term. The proof of \eqref{eq-C-C} in 
\cite{Lu-Peng:degeneration, Peng:thesis}
uses in
a very essential way the explicit formulas for the Poisson algebra $(\Cset[z_1, z_2, \ldots, z_n], \{\, , \, \}_\bfu)$
as given in Theorem \ref{th-intro} and Theorem \ref{th-zizj}.

\subsection{Acknowledgments} 
Parts of the paper, notably $\S$\ref{sec-coor-II}, $\S$\ref{subsec-Z}, and $\S$\ref{subsec-examples}, are from the first author's Mphil thesis. The authors would like to thank Allen Knutson and Victor Mouquin for helpful discussions.
This work was partially supported by a University of Hong Kong Post-graduate Studentship and by the Research Grants Council of the Hong Kong SAR, China (GRF HKU 704310 and 703712). 

\subsection{Notation}\label{notations}
Continuing with the notation from $\S$\ref{intro-intro}, let 
$\mathfrak{g}=\mathfrak{h} + \sum_{\alpha\in\Delta}\mathfrak{g}_\alpha$
be the root decomposition of $\mathfrak{g}$ with respect to $\h$. For $\alpha\in\Delta$, 
let $h_\alpha$ be the unique element in $[ \mathfrak{g}_\alpha ,\mathfrak{g}_{-\alpha} ]$ such that $\alpha(h_\alpha)=2$, and let
$\alpha^\vee: \Cset^\times \rightarrow T$ be the co-character of $T$ defined by $h_\alpha$. 
Let $\Delta_+ \subset \Delta$ be the set of positive roots
determined by $\b$, and let 
$\b_- = \h + \sum_{\alpha\in \Delta_+}\g_{-\alpha}$. The Borel subgroup of $G$ with Lie algebra
$\b_-$ is denoted by $B_-$.

Let $\alpha \in \Delta_+$.
If $e_\alpha \in \g_\alpha$ and $e_{-\alpha} \in \g_{-\alpha}$ are such that $[e_\alpha ,e_{-\alpha}]=h_\alpha$,
we call $\{h_\alpha, e_\alpha, e_{-\alpha}\}$ an $\sl(2, \Cset)$-triple for $\alpha$. Clearly, any non-zero
$e_\alpha \in \g_\alpha$
uniquely determines an $\sl(2, \Cset)$-triple $\{h_\alpha, e_\alpha, e_{-\alpha}\}$, and every other 
$\sl(2, \Cset)$-triple for $\alpha$ is of the form $\{h_\alpha, \lambda e_\alpha, \lambda^{-1}e_{-\alpha}\}$
for a unique $\lambda \in \Cset$. Given 
an $\sl(2, \Cset)$-triple $\{h_\alpha, e_\alpha, e_{-\alpha}\}$,
denote by
$\theta_\alpha: \mathfrak{sl}(2,\mathbb{C})\rightarrow \mathfrak{g}$ the Lie algebra homomorphism defined by
\[
\theta_\alpha: \;\;\; 
\begin{pmatrix}
1 & 0 \\
0 & -1
\end{pmatrix}\longmapsto h_\alpha,\qquad
\begin{pmatrix}
0 & 1 \\
0 & 0
\end{pmatrix}\longmapsto e_\alpha,\qquad
\begin{pmatrix}
0 & 0 \\
1 & 0
\end{pmatrix}\longmapsto e_{-\alpha},
\]
and denote also by  $\theta_\alpha: \mathrm{SL}(2,\mathbb{C})\rightarrow G$ the corresponding Lie group homomorphism, so that
\[
\alpha^\vee(t)=\theta_\alpha\left(
\begin{pmatrix}
t & 0 \\
0 & t^{-1}
\end{pmatrix}\right), \hs t\in\mathbb{C}^\times.
\]
An $\sl(2, \Cset)$-triple $\{h_\alpha, e_\alpha, e_{-\alpha}\}$ for $\alpha \in \Delta_+$ also gives rise to the one-parameter subgroups $u_{\pm \alpha}: \Cset \to G$ via 
\[
u_\alpha(z)=\theta_\alpha\left(
\begin{pmatrix}
1 & z \\
0 & 1
\end{pmatrix}\right)=\exp (ze_\alpha),\quad
u_{-\alpha}(z)=\theta_\alpha\left(
\begin{pmatrix}
1 & 0 \\
z & 1
\end{pmatrix}\right)=\exp (ze_{-\alpha}), \hs z \in \Cset.
\]
Let $W = N_G(T)/T$ be again the Weyl group of $(G, T)$.  
For $\alpha \in \Delta_+$, let $s_\alpha \in W$ be the reflection in $W$ determined by $\alpha$,
and if $\{h_\alpha, e_\alpha, e_{-\alpha}\}$ is an $\sl(2, \Cset)$-triple for $\alpha$, let
$\dot{s}_\alpha$
be the representative of $s_\alpha$ in $N_G(T)$ given by
\begin{equation}\label{eq-dots}
\dot{s}_\alpha=u_{\alpha}(-1)u_{-\alpha}\left(1\right)u_{\alpha}\left(-1\right) \in N_G(T).
\end{equation}

For a complex algebraic torus $\T$ with Lie algebra $\t$, 
we use the same notation for an element $\lambda \in {\rm Hom}(\T, \Cset^\times)$ and its
differential at the identity element of
$\T$, which is an element in $\t^*$. The values of $\lambda$ on $t \in \T$ and on $x \in \t$ are respectively denoted
as $t^\lambda \in \Cset^\times$ and $\lambda(x) \in \Cset$.

\section{Definition of the Poisson structure $\pi_n$ on $\Zu$}
\label{piZw}

\subsection{The standard semi-simple Poisson Lie group $(G, \pist)$}\label{subsec-pist}
Recall from \cite{CP, Etingof-Schiffmann} that if $L$ is a Lie group, a Poisson bivector field $\piL$ on $L$ is said to be multiplicative 
if the map 
\[
(L\times L, \; \piL\times \piL)\longrightarrow (L,\, \piL):\;\; (l_1,\, l_2)\longmapsto l_1l_2, \hs l_1, l_2 \in L,
\]
is Poisson, where $\piL \times \piL$ is the product Poisson structure on $L \times L$.
A Poisson Lie group is a pair $(L, \piL)$, where $L$ is a Lie group and $\piL$ is a 
multiplicative Poisson bivector field on $L$. A Poisson Lie subgroup of a Poisson Lie group $(L,\piL)$ is
a Lie subgroup $L_1$ of $L$ which is also a 
Poisson submanifold with respect to $\piL$, and in this case $(L_1, \piL|_{\sL_1})$, or simply denoted as
$(L_1, \piL)$, is a Poisson Lie group. 

Let $G$ be a connected complex semi-simple Lie group and let the 
notation be as in $\S$\ref{notations}. Fix, furthermore, a symmetric non-degenerate  invariant bilinear form $\lara$ on $\mathfrak{g}$, and denote also by $\lara$ the  induced bilinear form on $\mathfrak{h}^*$.
Define $\Lambda \in \wedge^2 \g$ by
\[
\Lambda=\sum_{\alpha \in\Delta_+}\frac{\langle \alpha,\alpha \rangle}{2}e_{-\alpha} \wedge e_{\alpha} \in \wedge^2\mathfrak{g},
\]
where for each $\alpha \in \Delta_+$, $\{h_\alpha, e_\alpha, e_{-\alpha}\}$ is an 
$\sl(2, \Cset)$-triple for $\alpha$.   Note that for any $\alpha \in \Delta_+$,  the element
$e_{-\alpha} \wedge e_\alpha \in \wedge^2 \g$ stays the same if the 
$\sl(2, \Cset)$-triple $\{h_\alpha, e_\alpha, e_{-\alpha}\}$ is changed to
$\{h_\alpha, \lambda e_\alpha, \frac{1}{\lambda}e_{-\alpha}\}$ for $\lambda \in \Cset^\times$.   Consequently, 
the element $\Lambda \in \wedge^2 \g$ depends on $\lara$ but {\it not} on the choices of the $\sl(2, \Cset)$-triples
for the positive roots. 
Let $\pist$ be the bivector field on $G$ given by
\[
\pist(g) =l_g (\Lambda)-r_g(\Lambda),\hs g \in G,
\] 
where for $g \in G$, $l_g$ and $r_g$ respectively denote the left and right translations on $G$ by $g$.
Then $(G,\pist)$ is a Poisson Lie group, called a {\it standard complex semi-simple Poisson Lie group}
\cite[$\S$4.4]{Etingof-Schiffmann}.
Moreover, the Poisson structure $\pist$ is invariant under the action of $T$ by left translation, and the
$T$-orbits of symplectic leaves, also called $T$-leaves, of $\pist$ are precisely the so-called double Bruhat cells $(BuB) \cap (B_-vB_-)$, where $u, v \in W$ (see \cite{Hodges-Levasseur, KZ}).
In particular, every $BuB$, where $u \in W$, is a Poisson submanifold of $(G, \pist)$, and
every parabolic subgroup $P$ of $G$ containing $B$, being a union of $(B, B)$-double cosets in $G$,
is a Poisson Lie subgroup of $(G, \pist)$. Similar statements hold if $B$ is replaced by $B_-$.

We state another important property of $(G, \pist)$: let $\alpha$ be a simple root and
consider the group homomorphism $\theta_\alpha: SL(2, \Cset) \to G$ defined in $\S$\ref{notations} corresponding to any choice
of an $sl(2, \Cset)$-triple $\{h_\alpha, e_\alpha, e_{-\alpha}\}$ for $\alpha$.
Equip $SL(2, \Cset)$ with the multiplicative Poisson structure
\begin{equation}\label{eq-pi-alpha}
\pi_{\scriptscriptstyle{SL(2, \Cset)}} (g) = l_g(\Lambda_0) - r_g(\Lambda_0), \hs g \in SL(2, \Cset),
\end{equation}
where $\displaystyle \Lambda_0 = \left(\begin{array}{cc} 0 & 0 \\ 1 & 0\end{array}\right) \wedge
\left(\begin{array}{cc} 0 & 1 \\ 0 & 0\end{array}\right) \in \wedge^2 \sl(2, \Cset).$
Then \cite{k-s:quantum}
\begin{equation}\label{theta-Poisson}
\theta_\alpha: \;\; \left(SL(2, \Cset), \; \frac{\la \alpha, \alpha \ra}{2}\pi_{\scriptscriptstyle{SL(2, \Cset)}}\right) \longrightarrow (G, \, \pist)
\end{equation}
is a Poisson map. 
It follows that $\theta_\alpha(SL(2, \Cset))$ is a Poisson Lie subgroup of $(G, \pist)$. 
Moreover,   let
$g = u_{-\alpha}(z)$ and $g' = u_{\alpha}(z)\dot{s}_\alpha$, where $z \in \Cset$.
Then 
\begin{align}\label{eq-lpi-1}
\pist(g) &= \frac{\la \alpha, \alpha \ra}{2} l_g(z h_\alpha \wedge e_{-\alpha}), \\
\label{eq-lpi}
\pist(g')& = \frac{\la \alpha, \alpha \ra}{2} l_{g'}(z h_\alpha \wedge e_{-\alpha} -2e_\alpha
\wedge e_{-\alpha}) = \frac{\la \alpha, \alpha \ra}{2} r_{g'}(z e_\alpha \wedge h_\alpha + 2 e_\alpha \wedge e_{-\alpha}).
\end{align}

\subsection{The definition of the Poisson structure $\pi_n$ on $\Zu$}
\label{piBS}
Recall that given a Poisson Lie group $(L,\piL)$ and a Poisson manifold $(Y, \piY)$, 
a left Lie group action $\sigma: L \times Y \to Y$ of $L$ on $Y$ is said to be a Poisson action if
$\sigma$ is a Poisson map from the product Poisson manifold $(L \times Y, \; \piL \times \piY)$ to $(Y, \piY)$.
Right Poisson actions of Poisson Lie groups are similarly defined.

Let $(Q, \piQ)$ be a Poisson Lie group, let $(X, \piX)$ be a Poisson manifold with a
right Poisson action by $(Q, \piQ)$, and let $(Y, \piY)$ a Poisson submanifold with a left Poisson action
by $(Q, \piQ)$. Define the right action of $Q$ on $X \times Y$ by 
\[
(x, \, y) \cdot q = (xq, \, q^{-1}y), \hs x \in X, \, y \in Y, \, q \in Q,
\]
and assume that the quotient space of $X \times Y$ by $Q$, denoted by $X \times_Q Y$, is a smooth manifold. 
Then (see \cite[$\S$7.1]{Lu-Mou:mixed} and \cite{STS2}) the direct product Poisson structure
$\piX \times \piY$ on $X \times Y$ projects to a well-defined Poisson structure on $X \times_Q Y$.

\begin{example}\label{ex-LQY}
{\em Let $(Q, \piQ)$ be a closed Poisson Lie subgroup of a Poisson Lie group $(L, \piL)$, and let
$(Y, \piY)$ be a Poisson manifold with a left Poisson action by $(Q, \piQ)$. Consider the quotient manifold
$Z = L \times_Q Y$,
where $Q$ acts on $L$ by right translation. Then $Z$  has the Poisson structure $\piZ$
that is the projection 
to $Z$ of 
the direct product Poisson structure $\piL \times \piY$ on $L \times Y$. Denoting the image in $Z$ of $(l, y) \in L \times Y$ by
$[l, y]$, it follows from the multiplicativity of $\piL$ that
the left action of $L$ on $Z$ given by
\begin{equation}\label{eq-g-dot}
l \cdot [l_1, \; y] = [ll_1, \; y], \hs l, \,l_1 \in L, \, y \in Y,
\end{equation}
is a Poisson action of the Poisson Lie group $(L, \piL)$ on the Poisson manifold $(Z, \, \piZ)$. Moreover, since $\piL(e) = 0$,
where $e$ is the identity element of $L$, the inclusion $Y \hookrightarrow L \times Y, y \mapsto (e, y)$, $y \in Y$,
is a Poisson embedding of $(Y, \piY)$ into $(L \times Y, \, \piL \times \piY)$. Consequently, 
\[
Y \hookrightarrow Z, \;\; y \longmapsto [e, \; y], \hs y \in Y,
\]
is a Poisson embedding of $(Y, \piY)$ into the Poisson manifold $(Z, \piZ)$.
\hfill $\diamond$}
\end{example}

Consider now the standard semisimple Poisson Lie group $(G, \pist)$ in $\S$\ref{subsec-pist}.
Let $\bfu=(s_1,\ldots, s_n)$ be any sequence of simple reflections in the Weyl group $W$. Then for each $1 \leq i \leq n$, 
the parabolic subgroup $P_{s_i} = B \cup Bs_iB$ is a Poisson Lie subgroup of $(G, \pist)$. By taking 
$(L, \piL) = (P_{s_i}, \pist)$ and $Q = B$ in
Example \ref{ex-LQY} and repeat the construction therein, one sees that the direct product Poisson structure
$\pi_{\rm st}^n$, regarded as a Poisson structure on the product manifold $P_{s_1} \times \cdots \times P_{s_n}$, projects to a well-defined 
Poisson structure, denoted by $\pi_n$, on the Bott-Samelson variety $\Zu$.
It also follows from Example \ref{ex-LQY} that the left action of $P_{s_1}$ on $\Zu$ given by 
\begin{equation}\label{eq-P-on-Zu}
p \cdot [p_1, p_2, \ldots, p_n] = [pp_1, \, p_2, \ldots, \, p_n], \hs p \in P_{s_1}, \, p_j \in P_{s_j}, \; 1 \leq j \leq n,
\end{equation}
is a Poisson action of the Poisson group $(P_{s_1}, \pist)$ on the Poisson manifold $(\Zu, \pi_n)$.
In particular, since $\pist(t) = 0$ for $t \in T$, the action of $T$ on $\Zu$ via (\ref{eq-P-on-Zu}) is by 
Poisson isomorphisms of $\pi_n$.

\subsection{${\mathbb P}^1$-extensions}\label{subsec-P-1}
To prepare for the calculation of the Poisson structure $\pi_n$ in coordinates, we first look at a special case of
Example \ref{ex-LQY}: let $(Y, \piY)$ be 
a Poisson manifold with a left Poisson action $\sigma$ by the Poisson Lie subgroup $(B, \pist)$ of $(G, \pist)$,
and let $\alpha$ be a simple root. One then has  the quotient manifold
$Z = P_{s_\alpha} \times_B Y$, which fibers over $P_{s_\alpha}/B \cong {\mathbb P}^1$ with fibers diffeomorphic to $Y$.
 Let $\piZ$ denote the projection to $Z$ of the
product Poisson structure $\pist \times \piY$ on $P_{s_\alpha} \times Y$. 
Choose any non-zero $e_\alpha \in \g_\alpha$, giving rise to the $\sl(2, \Cset)$-triple
$\{h_\alpha, e_\alpha, e_{-\alpha}\}$ for $\alpha$, and let
the notation be as in $\S$\ref{notations}. 
Consider the two open subsets 
\[
Z_- =\{[u_{-\alpha}(z), \, y]: \, z \in \Cset, \,y \in Y\} \hs \mbox{and} \hs
Z_+ =\{[u_{\alpha}(z) \ds_\alpha, \, \, y]: z \in \Cset, y \in Y\}
\]
of $Z$ with parametrizations 
\begin{align*}
&\psi_-: \;\; \Cset \times Y \longrightarrow Z_-,\;\; \psi_-(z, y) = [u_{-\alpha}(z), \; y], \\
&\psi_+: \;\; \Cset \times Y \longrightarrow Z_+,\;\;\psi_+(z, y) =[u_{\alpha}(z)\ds_\alpha, \; y].
\end{align*}
We will compute $\psi_-^{-1}(\piZ)$ and $\psi_+^{-1}(\piZ)$ as bi-vector fields on $\Cset \times Y$.
For $x \in \b$, let $\eta_x$ be the vector field on $Y$ given by 
$\eta_x(y) = \frac{d}{dt}|_{t = 0} \exp (tx) y$ for $y \in Y$.
In the statement of the following Lemma \ref{le-Z-Pi-1}, we use the obvious way of 
viewing vector fields on $\Cset$ and on $Y$ 
as that on $\Cset \times Y$.

\begin{lemma}\label{le-Z-Pi-1}
With the notation as above, one has
\begin{align}\label{eq-Pi-zy-1}
\psi_-^{-1}(\piZ)(z, y) &= -\frac{\la \alpha, \alpha \ra}{2} z \frac{d}{dz} 
\wedge \eta_{h_\alpha}(y) + \piY(y),\\
\label{eq-Pi-zy-2}
\psi_+^{-1}(\piZ)(z, y) &= 
\frac{\la \alpha, \alpha \ra}{2} \frac{d}{dz} \wedge \left(z \eta_{h_\alpha}(y) -2
\eta_{e_\alpha}(y)\right) + \piY(y).
\end{align}
\end{lemma}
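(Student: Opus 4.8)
The plan is to write each parametrization as $\psi_\pm=q\circ\iota_\pm$, where $q\colon P_{s_\alpha}\times Y\to Z$ is the quotient map and $\iota_\pm$ is a section of $q$, and then to read off $\psi_\pm^{-1}(\piZ)$ from the formulas \eqref{eq-lpi-1} and \eqref{eq-lpi} for $\pist$ along the curves $z\mapsto u_{-\alpha}(z)$ and $z\mapsto u_\alpha(z)\ds_\alpha$ in $P_{s_\alpha}$. Recall from $\S$\ref{subsec-P-1} that $q$ is a surjective submersion along which the bivector $\pist\times\piY$ on $P_{s_\alpha}\times Y$ projects to $\piZ$, i.e.\ $dq_m\big((\pist\times\piY)(m)\big)=\piZ(q(m))$ for all $m$; since the $B$-action $(p,y)\cdot b=(pb,b^{-1}y)$ is free, $\ker(dq_{(g,y)})=\{((l_g)_*x,\,-\eta_x(y)):x\in\b\}$. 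Put $\iota_-(z,y)=(u_{-\alpha}(z),y)$ and $\iota_+(z,y)=(u_\alpha(z)\ds_\alpha,y)$, so that $q\circ\iota_\pm=\psi_\pm$. Since $u_{-\alpha}$ is a one-parameter subgroup, the velocity of the first curve at $g=u_{-\alpha}(z)$ is $(l_g)_*e_{-\alpha}$; for the second curve, writing $g'=u_\alpha(z)\ds_\alpha$, the velocity is $(r_{g'})_*e_\alpha=(l_{g'})_*\big(\mathrm{Ad}_{g'^{-1}}e_\alpha\big)$, and a direct $SL(2,\Cset)$ computation from \eqref{eq-dots} gives $\mathrm{Ad}_{g'^{-1}}e_\alpha=\mathrm{Ad}_{\ds_\alpha^{-1}}e_\alpha=-e_{-\alpha}$, so that velocity equals $-(l_{g'})_*e_{-\alpha}$. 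In both cases the velocity is transverse to the $B$-orbit (as $e_{-\alpha}\notin\b$), so a dimension count gives the pointwise splitting $T(P_{s_\alpha}\times Y)=(d\iota_\pm)\big(T(\Cset\times Y)\big)\oplus\ker(dq)$ along $\iota_\pm$.

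The computation now rests on the following elementary observation: if a bivector $\pi$ on $M$ projects along a surjective submersion $q\colon M\to N$ to a bivector $\bar\pi$ on $N$, and $\iota\colon U\to M$ is an embedding such that $q\circ\iota$ is a diffeomorphism onto an open subset of $N$ and $TM|_{\iota(U)}=(d\iota)(TU)\oplus\ker(dq)$, then
\[
\big((q\circ\iota)^{-1}\big)_*(\bar\pi)=(d\iota)^{-1}\circ\mathrm{pr}\circ(\pi\circ\iota),
\]
where $\mathrm{pr}$ is the projection $\wedge^2 TM|_{\iota(U)}\to\wedge^2(d\iota)(TU)$ annihilating every wedge term that involves a $\ker(dq)$-direction. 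Indeed $dq$ kills $\ker(dq)$, hence $dq(\pi(\iota(w)))=dq(\mathrm{pr}(\pi(\iota(w))))$, while $dq$ restricts on $(d\iota)(T_wU)$ to the isomorphism $d(q\circ\iota)_w\circ(d\iota_w)^{-1}$ onto $T_{q(\iota(w))}N$.

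Applying this to $\iota_-$ and using \eqref{eq-lpi-1}, one has $(\pist\times\piY)(\iota_-(z,y))=\frac{\la\alpha,\alpha\ra}{2}z\,((l_g)_*h_\alpha,0)\wedge((l_g)_*e_{-\alpha},0)+\piY(y)$, in which $((l_g)_*e_{-\alpha},0)=(d\iota_-)(\frac{d}{dz})$ and $\piY(y)$ already lie in $\wedge^2(d\iota_-)(TU)$, whereas $((l_g)_*h_\alpha,0)=((l_g)_*h_\alpha,-\eta_{h_\alpha}(y))+(d\iota_-)(\eta_{h_\alpha}(y))$ splits off a $\ker(dq)$-vector (as $h_\alpha\in\b$); applying $\mathrm{pr}$ and then $(d\iota_-)^{-1}$ produces \eqref{eq-Pi-zy-1}. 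Applying it to $\iota_+$, substitute $(l_{g'})_*e_{-\alpha}=-(d\iota_+)(\frac{d}{dz})$ into \eqref{eq-lpi} to rewrite $\pist(g')=\frac{\la\alpha,\alpha\ra}{2}(d\iota_+)(\frac{d}{dz})\wedge\big(z(l_{g'})_*h_\alpha-2(l_{g'})_*e_\alpha\big)$; since $h_\alpha,e_\alpha\in\b$, both $((l_{g'})_*h_\alpha,0)$ and $((l_{g'})_*e_\alpha,0)$ differ from $(d\iota_+)(\eta_{h_\alpha}(y))$ and $(d\iota_+)(\eta_{e_\alpha}(y))$ by $\ker(dq)$-vectors, and applying $\mathrm{pr}$ and $(d\iota_+)^{-1}$ to $(\pist\times\piY)(\iota_+(z,y))$ yields \eqref{eq-Pi-zy-2}.

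The only genuine obstacle is organizational: one must set up the two complementary decompositions $(d\iota_\pm)(TU)\oplus\ker(dq)$ correctly — in particular identify the curve velocities with $\pm(l_g)_*e_{-\alpha}$, which for the $Z_+$ chart requires the explicit $\mathrm{Ad}_{\ds_\alpha}$-computation from \eqref{eq-dots} — and then keep careful track of which of the wedge terms coming from \eqref{eq-lpi-1} and \eqref{eq-lpi} acquire a $\b$-component and are hence discarded by $\mathrm{pr}$. Once that is done, \eqref{eq-Pi-zy-1} and \eqref{eq-Pi-zy-2} drop out with no further calculation.
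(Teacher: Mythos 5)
Your argument is correct: the transversality of $\mathrm{im}(d\iota_\pm)$ to $\ker(dq)$ holds (since $e_{-\alpha}\notin\b$ and the $B$-action is free), your projection lemma for bivectors along a submersion with a chosen complement is sound, and the bookkeeping — velocities $\pm(l_g)_*e_{-\alpha}$, and the replacement of the $\b$-directions $(l_g)_*h_\alpha$, $(l_{g'})_*e_\alpha$ by $\eta_{h_\alpha}(y)$, $\eta_{e_\alpha}(y)$ modulo $\ker(dq)$ — reproduces \eqref{eq-Pi-zy-1} and \eqref{eq-Pi-zy-2} exactly. The route is genuinely different from the paper's, though. The paper does not work upstairs on $P_{s_\alpha}\times Y$ at all: it invokes the Poisson-action identity from Example \ref{ex-LQY}, writing $\piZ(q)=\lambda_g(\piZ([e,y]))+\rho_y(\pist(g))$ for $q=\lambda_g([e,y])$, identifies the first summand with $\piY(y)$ via the Poisson embedding $Y\hookrightarrow Z$, $y'\mapsto[e,y']$, and then pushes the Lie-algebra elements $h_\alpha$, $e_{\pm\alpha}$ forward under $\psi_\pm^{-1}\lambda_g\rho_y$ (the relation $[pb,y]=[p,by]$ is what converts $\b$-directions into the action vector fields $\eta_x$ there). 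Your version buys self-containedness — it needs only the defining property $\piZ=q_*(\pist\times\piY)$ plus linear algebra on a transversal, and in fact your projection lemma is close in spirit to Lemma \ref{le-linear-algebra} in the Appendix — at the cost of having to set up and verify the splitting $T(P_{s_\alpha}\times Y)=(d\iota_\pm)(T(\Cset\times Y))\oplus\ker(dq)$; the paper's version is shorter because the action identity does that splitting for free. Both arrive at the same two pushforward computations, so the core content coincides.
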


\begin{proof} For $g \in P_{s_\alpha}$ and $y \in Y$, let
\begin{align*}
&\lambda_g:\;\;  Z \longrightarrow  Z: \;\; [p, \; y'] \longmapsto [gp, \; y'], \hs p \in P_{s_\alpha}, \, y' \in Y, \\
&\rho_y: \;\; P_{s_\alpha} \longrightarrow Z:\;\;  p \longmapsto [p, \; y], \hs p \in P_{s_\alpha}.
\end{align*}
Fix $z \in \Cset$ and $y \in Y$, and let $g = u_{-\alpha}(z) \in P_{s_\alpha}$ and 
$q =[u_{-\alpha}(z), \,y] = \lambda_g([e, \, y]) \in Z$. 
By Example \ref{ex-LQY},   
$\piZ(q) = \lambda_g (\piZ([e, y])) + \rho_y (\pist(g))$. 
Using  (\ref{eq-lpi-1}), one has
\[
\piZ(q)  = \lambda_g (\piZ([e, y])) + \frac{\la \alpha, \alpha \ra}{2}(\rho_y l_g) (z h_\alpha \wedge e_{-\alpha}) =
\lambda_g (\piZ([e, y])) + \frac{\la \alpha, \alpha \ra}{2}(\lambda_g \rho_y) (z h_\alpha \wedge e_{-\alpha})
\]
and thus
\[
(\psi_-^{-1}(\piZ))(z, y)=\psi_-^{-1}(\piZ(q)) =
(\psi_-^{-1} \circ \lambda_g) (\piZ([e, y])) + \frac{\la \alpha, \alpha \ra}{2} (\psi_-^{-1} \lambda_g \rho_y) (z h_\alpha \wedge e_{-\alpha}).
\]
Since the inclusion $(Y, \piY) \hookrightarrow (Z, \piZ): y' \mapsto [e, y']$ is Poisson, 
$(\psi_-^{-1} \circ \lambda_g) (\piZ([e, y]))=\piY(y)$.
Direct calculations give 
\[
(\psi_-^{-1} \lambda_g \rho_y) (h_\alpha) = \eta_{h_\alpha}(y) \hs \mbox{and} \hs
(\psi_-^{-1} \lambda_g \rho_y) (e_{-\alpha})=\frac{d}{dz}.
\]
 One thus has
\eqref{eq-Pi-zy-1}. 
Similarly, for $z \in \Cset$ and $y \in Y$, letting $g' = u_{\alpha}(z)\ds_\alpha $ and using (\ref{eq-lpi}), one has
\[
\psi_+^{-1}(\piZ)(z, y)  = \piY(y) + \frac{\la \alpha, \alpha \ra}{2}(\psi_+^{-1} \lambda_{g'} \rho_y) 
((zh_\alpha - 2e_\alpha)\wedge e_{-\alpha}).
\]
Since $(\psi_+^{-1} \lambda_{g'} \rho_y)(h_\alpha) =\eta_{h_\alpha}, \, $ 
$(\psi_+^{-1} \lambda_{g'} \rho_y)(e_\alpha) =\eta_{e_\alpha}$, and
$(\psi_+^{-1} \lambda_{g'} \rho_y)(e_{-\alpha}) = -\frac{d}{dz}$, one has \eqref{eq-Pi-zy-2}.
\end{proof}

\section{The Poisson structure $\pi_n$ in affine coordinate charts, I}\label{sec-coors}
%\subsection{Notation}\label{nota-fix-simple}
Throughout $\S$\ref{sec-coors}, we fix a sequence $\bfu =(s_1,\ldots, s_n)$ of simple reflections in $W$, and let $\Zu$ be the Bott-Samelson variety associated to $\bfu$. Recall that $\Gamma$ denotes the set of all simple roots. 
For $1 \leq j \leq n$, let $\alpha_j\in\Gamma$ be such that $s_j=s_{\alpha_j}$.
To define local coordinates on $\Zu$, we also fix a root vector $e_\alpha$ for each $\alpha \in \Gamma$ and let $e_{-\alpha} 
\in \g_{-\alpha}$ be the unique element such that $[e_\alpha, e_{-\alpha}] = h_\alpha$. 
One then (see $\S$\ref{notations}) has the one-parameter subgroups $u_{\pm \alpha}: \Cset \to G$ for each $\alpha \in \Gamma$ and 
the representative $\dot{s}_\alpha \in N_G(T)$ for the simple reflection $s_\alpha \in W$. 

\subsection{Affine coordinate charts on $\Zu$}\label{coordcharts}
Let
\[
\Upu=\{e,s_1\}\times\{e,s_2\}\times\cdots \{e,s_n\},
\]
where $e$ denotes the identity element of $W$. Elements in $\Upu$ will be 
called {\it subexpressions} of $\bfu$. When $\gamma = \bfu$, we say that $\gamma$ is the {\it full subexpression}
of $\bfu$. For
$\gamma = (\gamma_1, \gamma_2, \ldots, \gamma_n) \in \Upu$,
let $\gamma^0 = e$ and 
$\gamma^i = \gamma_1\gamma_2 \cdots \gamma_i \in W$ for $1 \leq i \leq n$.

As a subgroup of $P_{s_1}$, the maximal torus $T$ of $G$ acts on $\Zu$ via (\ref{eq-P-on-Zu}),
with the fixed point set $(\Zu)^T =\{[\dot{\gamma}_1, \dot{\gamma}_2, \ldots, \dot{\gamma}_n]: 
(\gamma_1,\gamma_2, \ldots, \gamma_n)\in\Upu\}$, where $\dot{e}=e$. 
For each $\ga = (\ga_1, \ga_2, \ldots, \ga_n) \in \Upu$,
let $\mathcal{O}^\gamma \subset \Zu$ be the image of the embedding
$\Phi^\gamma:\mathbb{C}^n\rightarrow \Zu$ given by
\begin{equation}\label{eq-Phi-gamma}
\Phi^\gamma(z_1,\ldots, z_n)= [u_{-\gamma_1(\alpha_1)}(z_1)\dot{\gamma}_1,\, u_{-\gamma_2(\alpha_2)}(z_2)\dot{\gamma}_2,\,
\ldots, \,u_{-\gamma_n(\alpha_n)}(z_n)\dot{\gamma}_n].
\end{equation}
The parametrization $\Phi^\gamma$ of $\mathcal{O}^\gamma$ by $\mathbb{C}^n$ depends on the choice of the root vectors 
$\{e_\alpha: \alpha \in \Gamma\}$ for the simple
roots, but different choices of such root vectors only result in re-scalings of the
coordinate functions. In particular, the open subset $\mathcal{O}^\gamma$ of $\Zu$ is canonically defined.
It is also easy to see that each $\mathcal{O}^\gamma$ is $T$-invariant with
\begin{equation}\label{hvectorfield}
t \cdot \Phi^\gamma(z_1, z_2, \ldots, z_n) = \Phi^\gamma\left(t^{-\gamma^1(\alpha_1)}z_1, \;
t^{-\gamma^2(\alpha_2)}z_2, \;\ldots, \;t^{-\gamma^n(\alpha_n)}z_n\right), 
\end{equation}
where  $t \in T$ and $(z_1, z_2, \ldots, z_n) \in {\mathbb C}^n.$
Note also that $\bigcup_{\gamma\in\Upsilon_{\textbf{w}}}\mathcal{O}^\gamma=\Zu$, i.e.,
$\Zu$ is covered by the $2^n$ $T$-invariant affine coordinate charts
$\{(\Phi^\gamma: \Cset^n \to \mathcal{O}^\gamma): \gamma \in \Upu\}$, which we will also abbreviated as
the affine charts $\{\O^\gamma: \gamma \in \Upu\}$.
.

\subsection{The Poisson structure $\pi_n$ in coordinates, I}\label{subsec-coor-I}
For each $\gamma \in \Upu$, we now give 
our first formula for the Poisson structure $\pi_n$ on $\Zu$ in the coordinates $(z_1, z_2, \ldots, z_n)$ on 
$\mathcal{O}^\gamma$ given in \eqref{eq-Phi-gamma}. A more detailed formula, expressing each Poisson bracket $\{z_i, z_k\}$, 
where  $1 \leq i < k \leq n$, 
as a polynomial with coefficients explicitly given in terms of the structure constants of the Lie algebra
$\g$, will be given in $\S$\ref{sec-coor-II}. 

For $1\leq i \leq n-1$, let $\sigma_i$ be the holomorphic vector field on the Bott-Samelson variety $Z_{(s_{i+1},\ldots, s_n)}$ given by
\begin{equation}\label{eq-Vi}
\sigma_i(p)=\frac{d}{dt}|_{t=0} ((\exp (t e_{\alpha_i}))\cdot p),\hs \hs p\in Z_{(s_{i+1},\ldots, s_n)},
\end{equation}
where $\cdot$ denotes the left action of $B \subset P_{s_i}$ on $Z_{(s_{i+1},\ldots, s_n)}$ by left translation
(see (\ref{eq-P-on-Zu})).

\begin{lemma}\label{le-zizj-1}
Let $\gamma \in \Upu$. In the coordinates $(z_1,\ldots z_n)$ on
the affine chart ${\mathcal{O}^\gamma}$ given in \eqref{eq-Phi-gamma}, the Poisson structure $\pi_n$  on $\Zu$ is given by,
\begin{equation}\label{eq-zizj-1}
\{z_i , z_k \} = \begin{cases}
\langle \gamma^i(\alpha_i), \, \gamma^k(\alpha_k) \rangle z_iz_k,  &\mbox{if }\gamma_i=e \\
-\langle \gamma^i(\alpha_i), \, \gamma^k(\alpha_k) \rangle z_iz_k -\langle \alpha_i, \alpha_i \rangle \sigma_i(z_k) &\mbox{if }\gamma_i=s_i
\end{cases},
\quad 1\leq i < k \leq n,
\end{equation}
where $\sigma_i(z_k)$ denotes the action of the vector field $\sigma_i$ on $z_k$ as a local function
on $Z_{(s_{i+1},\ldots, s_n)}$ via the parametrization 
\[
\mathbb{C}^{n-i} \ni (z_{i+1}, \, \ldots, \, z_n) \longmapsto [u_{-\gamma_{i+1}(\alpha_{i+1})}(z_{i+1}) \dot{\gamma}_{i+1}, \, \ldots, u_{-\gamma_n(\alpha_n)}(z_n) \dot{\gamma}_n].
\]
\end{lemma}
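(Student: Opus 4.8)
The plan is to induct on $n$, peeling off the first simple reflection $s_1$ and applying Lemma~\ref{le-Z-Pi-1}. There is nothing to prove when $n=1$. For the inductive step, put $Y=Z_{(s_2,\ldots,s_n)}$, $\gamma'=(\gamma_2,\ldots,\gamma_n)$, and write $(\gamma')^{j}:=\gamma_2\gamma_3\cdots\gamma_{j+1}=\gamma_1^{-1}\gamma^{j+1}$ for the partial products of $\gamma'$. By the construction of $\pi_n$ in $\S$\ref{piBS}, carried out by quotienting $P_{s_1}\times\cdots\times P_{s_n}$ first by $\{e\}\times B^{n-1}$ and then by $B$, the Poisson manifold $(\Zu,\pi_n)$ is exactly the ${\mathbb P}^1$-extension of $(Y,\pi_{n-1})$ along $\alpha=\alpha_1$ from $\S$\ref{subsec-P-1}, i.e.\ $Z=\Zu$, $\piZ=\pi_n$, $\piY=\pi_{n-1}$. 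Comparing \eqref{eq-Phi-gamma} with the parametrizations $\psi_\pm$ of $\S$\ref{subsec-P-1} and using $-s_1(\alpha_1)=\alpha_1$, one has $\Phi^\gamma(z_1,\ldots,z_n)=\psi_-(z_1,\Phi^{\gamma'}(z_2,\ldots,z_n))$ when $\gamma_1=e$, and $\Phi^\gamma(z_1,\ldots,z_n)=\psi_+(z_1,\Phi^{\gamma'}(z_2,\ldots,z_n))$ when $\gamma_1=s_1$, where $\Phi^{\gamma'}$ parametrizes $\O^{\gamma'}\subset Y$.

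With this identification, the brackets $\{z_i,z_k\}_\gamma$ for $2\le i<k\le n$ come entirely from the $\piY$-summand of \eqref{eq-Pi-zy-1}--\eqref{eq-Pi-zy-2}: they equal the brackets of $\pi_{n-1}$ on $\O^{\gamma'}\subset Y$ in the coordinates $(z_2,\ldots,z_n)$, for which the inductive hypothesis applies verbatim. One then only has to rewrite the outcome in terms of $\bfu$: since $(\gamma')^{j}=\gamma_1^{-1}\gamma^{j+1}$, the $W$-invariance of $\lara$ gives $\la(\gamma')^{i-1}(\alpha_i),(\gamma')^{k-1}(\alpha_k)\ra=\la\gamma^i(\alpha_i),\gamma^k(\alpha_k)\ra$, while the vector field $\sigma_i$ of \eqref{eq-Vi} depends only on $Z_{(s_{i+1},\ldots,s_n)}$ and not on the ambient variety. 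This reproduces \eqref{eq-zizj-1} for these pairs.

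For the remaining brackets $\{z_1,z_k\}_\gamma$, $2\le k\le n$, I would expand the $\frac{d}{dz}$-summand of \eqref{eq-Pi-zy-1}--\eqref{eq-Pi-zy-2} in the coordinates on $Y=Z_{(s_2,\ldots,s_n)}$. The vector field $\eta_{e_{\alpha_1}}$ appearing there is, by the very definition \eqref{eq-Vi}, the vector field $\sigma_1$ on $Y$. For $\eta_{h_{\alpha_1}}$, note $h_{\alpha_1}\in\h$, so $\eta_{h_{\alpha_1}}$ is the generator of the $T$-action on $Y$; applying \eqref{hvectorfield} to $Z_{(s_2,\ldots,s_n)}$ and the identity $\mu(h_{\alpha_1})=2\la\mu,\alpha_1\ra/\la\alpha_1,\alpha_1\ra$ for $\mu\in\h^*$ gives
\[
\eta_{h_{\alpha_1}}=-\sum_{k=2}^{n}\frac{2\,\la (\gamma')^{k-1}(\alpha_k),\,\alpha_1\ra}{\la\alpha_1,\alpha_1\ra}\,z_k\,\frac{\partial}{\partial z_k},
\]
where $(\gamma')^{k-1}(\alpha_k)=\gamma^k(\alpha_k)$ if $\gamma_1=e$ and $(\gamma')^{k-1}(\alpha_k)=s_1(\gamma^k(\alpha_k))$ if $\gamma_1=s_1$. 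Substituting $\eta_{h_{\alpha_1}}$ and $\eta_{e_{\alpha_1}}=\sigma_1$ into \eqref{eq-Pi-zy-1} (case $\gamma_1=e$) or \eqref{eq-Pi-zy-2} (case $\gamma_1=s_1$) and using $\la s_1(\nu),\alpha_1\ra=-\la\nu,\alpha_1\ra$ in the latter, the factors $\la\alpha_1,\alpha_1\ra$ cancel, leaving $\{z_1,z_k\}_\gamma=\la\alpha_1,\gamma^k(\alpha_k)\ra z_1z_k$ if $\gamma_1=e$ and $\{z_1,z_k\}_\gamma=\la\alpha_1,\gamma^k(\alpha_k)\ra z_1z_k-\la\alpha_1,\alpha_1\ra\sigma_1(z_k)$ if $\gamma_1=s_1$. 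As $\gamma^1(\alpha_1)=\alpha_1$ in the first case and $\gamma^1(\alpha_1)=-\alpha_1$ in the second, these are exactly the $i=1$ instances of \eqref{eq-zizj-1}, closing the induction.

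The only real subtlety is the bookkeeping of $T$-weights and the $s_1$-twist: inside the $\psi_+$ chart the $Y$-coordinates $z_2,\ldots,z_n$ carry, with respect to the $T$-action on $\Zu$, the weight $-\gamma^k(\alpha_k)$ of \eqref{hvectorfield}, which differs by the reflection $s_1$ from their intrinsic weight $-(\gamma')^{k-1}(\alpha_k)$ on $Z_{(s_2,\ldots,s_n)}$; correctly tracking this twist through the $W$-invariance of $\lara$ and $s_1(\alpha_1)=-\alpha_1$ is precisely what yields the sign difference between the two cases of \eqref{eq-zizj-1}. All the rest --- identifying the ${\mathbb P}^1$-extension, evaluating $\eta_{h_{\alpha_1}}$ and $\eta_{e_{\alpha_1}}$, and the final substitution --- is routine.
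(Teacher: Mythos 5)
Your proposal is correct and follows essentially the same route as the paper: the paper also identifies $\O^\gamma\cong\Cset\times\O^{\gamma'}$, applies Lemma \ref{le-Z-Pi-1} to write $\pi_n$ as $\pi_{n-1}$ plus a $\frac{d}{dz_1}$-term involving $\sigma_1$ and the $T$-generator $\eta_{h_{\alpha_1}}$ (computed from the weights in \eqref{hvectorfield} exactly as you do), and then iterates. Your version merely makes the induction and the $\gamma_1^{-1}$-twist of the partial products explicit, which the paper compresses into ``repeatedly using \eqref{eq-Pi-Pi}.''
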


\begin{proof}
Identify $\O^\ga \cong \Cset \times \O^{\ga'}$, where
$\ga' = (\ga_2, \ldots, \ga_n) \in \Upsilon_{\bfu'}$ and $\bfu'=(s_2, \ldots, s_n)$. 
Equip $\O^{\ga'}$ with the Poisson structure $\pi_{n-1}$ on $Z_{(s_2, \ldots, s_n)}$. One has, by Lemma \ref{le-Z-Pi-1},
\begin{equation}\label{eq-Pi-Pi}
\pi_n = \begin{cases} -\frac{\la \alpha_1, \alpha_1\ra}{2} z_1 \frac{d}{dz_1} \wedge \eta_1 + \pi_{n-1}, & \;\; \mbox{if}\;\;
\ga_1 = e,\vspace{.1in}\\
\frac{\la \alpha_1, \alpha_1\ra }{2} \frac{d}{dz_1} \wedge (z_1\eta_1 -
2\sigma_1) + \pi_{n-1}, & \;\; \mbox{if} \;\;\ga_1 = s_1,\end{cases}
\end{equation}
where $\eta_1$ is the holomorphic vector field on $Z_{(s_2, \ldots, s_n)}$ given by
\[
\eta_1(q) = \frac{d}{dt}|_{t=1}(\check{\alpha}_1(t) \cdot q), \hs q \in Z_{(s_2, \ldots, s_n)}.
\]
By (\ref{hvectorfield}),  the vector field $\eta_1$ is given in 
the coordinates $(z_2, \ldots, z_n)$ on $\O^{\gamma'}$ by 
\[
\eta_1= \sum_{k=2}^n(-\ga_2 \cdots \ga_k(\alpha_k))(h_{\alpha_1}) z_k \frac{\pa}{\pa z_k} = 
-\sum_{k=2}^n \frac{2\la \ga^1(\alpha_1), \ga^k(\alpha_k)\ra}{\la \alpha_1, \alpha_1 \ra} z_k \frac{\pa}{\pa z_k}.
\]
Lemma \ref{le-zizj-1}) now follows  by repeatedly using \eqref{eq-Pi-Pi}.
\end{proof}

\begin{example}\label{ex-1}
{\em 
Consider $G = SL(3, \Cset)$ with the standard choices of $B$ and $B_-$ consisting respectively of
upper triangular
and lower triangular matrices in $SL(3, \Cset)$, and let the bilinear form $\lara$ on $\sl(3, \Cset)$ be given by
$\la X, Y\ra = {\rm tr}(XY)$ for $X, Y \in \sl(3, \Cset)$. Denote the two simple roots by $\alpha_1$ and $\alpha_2$
choose root vectors $e_{\alpha_1} = E_{12}$ and $e_{\alpha_2} = E_{23}$, where $E_{ij}$ has  $1$ at the $(i,j)$-entry and $0$ everywhere else. Let $\bfu = (s_{\alpha_1}, s_{\alpha_2}, s_{\alpha_1})$. Using Lemma \ref{le-zizj-1}, one can compute
directly the Poisson structure $\pi_3$ on $\Zu$ in any of the eight affine coordinate charts with coordinates $(z_1, z_2, z_3)$. For example, 
for $\gamma = \bfu$,  one has
\begin{equation}\label{eq-ex-0}
\{z_{1},z_{2}\} = -z_{1}z_{2}, \hs  
\{z_{1},z_{3}\} = z_{1}z_{3}- 2, \hs 
\{z_{2},z_{3}\} = -z_{2}z_{3}, 
\end{equation}
and for $\gamma =
(s_{\alpha_1}, e, e) \in \Upu$, one has 
\begin{equation}\label{eq-ex-1}
\{z_{1},z_{2}\} = z_{1}z_{2}, \hs  
\{z_{1},z_{3}\} = -2z_{1}z_{3}+2z_{3}^2, \hs 
\{z_{2},z_{3}\} = -z_{2}z_{3}. 
\end{equation}
\hfill $\diamond$
}
\end{example}

\subsection{Some log-canonical charts for $\pi_n$}\label{subsec-log} Let $\ga \in \Upu$. We say that the affine coordinate chart $\O^\ga$ 
of $\Zu$ is
{\it log-canonical} for the Poisson structure $\pi_n$, or that 
the Poisson structure $\pi_n$ is {\it log-canonical} in the affine coordinate chart
$\O^\ga$, if
the Poisson brackets between the coordinate functions $(z_1, z_2, \ldots, z_n)$ on
$\O^\ga$ have the form $\{z_i, z_k\} = \lambda_{ik} z_iz_k$ for some $\lambda_{ik} \in \Cset$ for each pair $1 \leq i < k \leq n$.
By Lemma \ref{le-zizj-1}, $\pi_n$ is log-canonical in $\O^\gamma$ if and only if 
\[
\{z_i, z_k\} = \epsilon_i \la \ga^i(\alpha_i), \ga^k(\alpha_k)\ra z_iz_k, \hs 1 \leq i < k \leq n,
\]
where $\epsilon_i = 1$ if $\ga_i = e$ and $\epsilon_i = -1$ if $\ga_i = s_i$.
The following Lemma \ref{le-zizj-eee}, which follows trivially from Lemma \ref{le-zizj-1}, says that $\pi_n$ is log-canonical in the affine chart $\O^{(e, e, \ldots, e)}$.

\begin{lemma}\label{le-zizj-eee} In the coordinates $(z_1, z_2, \ldots, z_n)$ on $\O^{(e, e, \ldots, e)}$, one has
\[
\{z_i, \, z_k \} =\la \alpha_i, \, \alpha_k \ra z_iz_k, \hs \forall \; 1 \leq i < k \leq n.
\]
\end{lemma}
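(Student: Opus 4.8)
The plan is to derive Lemma \ref{le-zizj-eee} as an immediate specialization of the general formula \eqref{eq-zizj-1} in Lemma \ref{le-zizj-1}. Take $\gamma = (e, e, \ldots, e) \in \Upu$, so that $\gamma_i = e$ for every $i$, and consequently every partial product $\gamma^i = \gamma_1 \gamma_2 \cdots \gamma_i$ equals the identity $e$ in $W$. With this choice, for each pair $1 \leq i < k \leq n$ we are always in the first (upper) case of \eqref{eq-zizj-1}, since $\gamma_i = e$; there is no contribution from the vector field $\sigma_i$, and the formula reads $\{z_i, z_k\} = \la \gamma^i(\alpha_i), \, \gamma^k(\alpha_k)\ra z_i z_k$.

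The only remaining step is to simplify the pairing. Since $\gamma^i = \gamma^k = e$ acts trivially on $\h^*$, we have $\gamma^i(\alpha_i) = \alpha_i$ and $\gamma^k(\alpha_k) = \alpha_k$, so $\la \gamma^i(\alpha_i), \gamma^k(\alpha_k)\ra = \la \alpha_i, \alpha_k\ra$, which is exactly the claimed bracket $\{z_i, z_k\} = \la \alpha_i, \alpha_k\ra z_i z_k$. This shows in particular that $\pi_n$ is log-canonical on $\O^{(e,e,\ldots,e)}$ in the sense just defined, with $\lambda_{ik} = \la \alpha_i, \alpha_k\ra$ and $\epsilon_i = 1$ for all $i$.

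There is essentially no obstacle here: the entire content is the observation that the full subexpression $(e,\ldots,e)$ kills both the sign-twisting (all $\gamma_i = e$) and the Weyl-group twisting (all $\gamma^i = e$) in \eqref{eq-zizj-1}, leaving the bare form $\lara$ evaluated on simple roots. If one wanted a slightly more self-contained argument, one could alternatively note from \eqref{eq-Phi-gamma} that $\Phi^{(e,\ldots,e)}(z_1,\ldots,z_n) = [u_{-\alpha_1}(z_1), \ldots, u_{-\alpha_n}(z_n)]$, and run the inductive computation of Lemma \ref{le-zizj-1} directly, using \eqref{eq-Pi-zy-1} of Lemma \ref{le-Z-Pi-1} at each step (which is the $\gamma_1 = e$ branch of \eqref{eq-Pi-Pi}); but since Lemma \ref{le-zizj-1} is already proved in full generality, invoking it is the cleanest route.
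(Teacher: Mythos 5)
Your proposal is correct and is exactly the argument the paper intends: the text introduces Lemma \ref{le-zizj-eee} as following ``trivially from Lemma \ref{le-zizj-1},'' and your specialization to $\gamma=(e,\ldots,e)$ --- every $\gamma_i=e$ forces the first case of \eqref{eq-zizj-1}, and every $\gamma^i=e$ reduces the pairing to $\la\alpha_i,\alpha_k\ra$ --- is precisely that observation. Nothing is missing.
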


To exhibit other log-canonical affine coordinate charts for $\pi_n$, we make the following observation
on the functions $\sigma_i(z_k)$, $1 \leq i < k \leq n$, in Lemma \ref{le-zizj-1}.

\begin{lemma}\label{le-zizj-2} Let $\ga = (\ga_1, \ldots, \ga_n) \in \Upu$, and let $1 \leq i \leq n$.
If $\ga_i = s_i$ and if $k > i$ is such that $s_{j} \neq s_i$ for all $i+1 \leq j \leq k$, then
$\sigma_i(z_k) = 0$.
\end{lemma}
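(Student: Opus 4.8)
The plan is to unravel the definition of the vector field $\sigma_i$ on $Z_{(s_{i+1}, \ldots, s_n)}$ and show that, when no $s_j$ equals $s_i$ for $i+1 \leq j \leq k$, the flow of $\sigma_i$ does not move the coordinate $z_k$. Recall from \eqref{eq-Vi} that $\sigma_i$ is the infinitesimal generator of the left action of $\exp(te_{\alpha_i}) = u_{\alpha_i}(t) \in B \subset P_{s_i}$ on $Z_{(s_{i+1}, \ldots, s_n)}$. So I need to understand how $u_{\alpha_i}(t)$ acts, via the formula \eqref{eq-P-on-Zu}, on a point $[u_{-\gamma_{i+1}(\alpha_{i+1})}(z_{i+1})\dot\gamma_{i+1}, \ldots, u_{-\gamma_n(\alpha_n)}(z_n)\dot\gamma_n]$, and in particular to track the last coordinate $z_k$ (and all coordinates $z_{i+1}, \ldots, z_k$).

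First I would observe that, as the group element $u_{\alpha_i}(t)$ is propagated from the first factor through the successive quotients by $B$, at each stage one must absorb the non-Borel part using the parabolic structure. Concretely, the action of $b \in B$ on the first factor $p_1 = u_{-\gamma_{i+1}(\alpha_{i+1})}(z_{i+1})\dot\gamma_{i+1}$ of $Z_{(s_{i+1}, \ldots, s_n)}$ produces $bp_1$, which one rewrites as $p_1' b'$ with $p_1' \in P_{s_{i+1}}$ of the required normal form and $b' \in B$; the new group element $b'$ is then pushed to the next factor, and so on. The key point is that starting from $b = u_{\alpha_i}(t)$ with $\alpha_i$ a simple root distinct from $\alpha_{i+1}$: conjugating $u_{\alpha_i}(t)$ past $\dot s_{\alpha_{i+1}}^{\pm 1}$ and across the unipotent element $u_{-\alpha_{i+1}}(z_{i+1})$ keeps it inside $U$ (the unipotent radical of $B$), and in fact, because $\alpha_i \neq \alpha_{i+1}$, the "$u_{-\alpha_{i+1}}$-component" that would change $z_{i+1}$ never gets created — one stays in the span of root vectors $e_\beta$ with $\beta \in \Delta_+$ until one hits a factor with $s_j = s_i$. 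Iterating this argument through factors $i+1, i+2, \ldots, k$, none of which is $s_i$, shows that the coordinates $z_{i+1}, \ldots, z_k$ are all unchanged by the flow, hence $\sigma_i(z_k) = 0$.

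To make this rigorous and clean, rather than tracking the full $B$-valued cocycle I would prefer an inductive/functorial argument: use the identification $\O^{\gamma'} \cong \Cset \times \O^{\gamma''}$ (as in the proof of Lemma \ref{le-zizj-1}), where the first $\Cset$-factor is the $z_{i+1}$-coordinate, to reduce to understanding the $P_{s_{i+1}}$-level action. Since $s_{i+1} \neq s_i$, the element $u_{\alpha_i}(t)$ acts on $P_{s_{i+1}}/B \cong \mathbb{P}^1$ trivially on the relevant affine chart — indeed $u_{\alpha_i}(t) \in B$ normalizes the parametrization $z_{i+1} \mapsto [u_{-\gamma_{i+1}(\alpha_{i+1})}(z_{i+1})\dot\gamma_{i+1}]$ only up to the $B$-element that gets passed on, and the claim is that $z_{i+1}$ is fixed and the passed-on element is again of the form $u_{\alpha_i}(t)$ up to a factor in a unipotent subgroup that acts trivially on the remaining normal-form coordinates. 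Here I would invoke the standard commutation relations among the one-parameter subgroups $u_{\pm\beta}$ (Chevalley relations), together with the fact that $\dot s_{\alpha_{i+1}}$ acts on $\g_{\alpha_i}$ by carrying it to $\g_{s_{\alpha_{i+1}}(\alpha_i)}$, a positive root space since $\alpha_i, \alpha_{i+1}$ are distinct simple roots. Then I induct on $k - i$, peeling off one factor at a time, the hypothesis $s_j \neq s_i$ for $i+1 \leq j \leq k$ being exactly what keeps the relevant root positive at every step.

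The main obstacle I anticipate is the bookkeeping of the $B$-valued factor as it travels across the factors of $Z_{(s_{i+1}, \ldots, s_n)}$: one must argue that, although this factor need not literally remain $u_{\alpha_i}(t)$ — it can pick up contributions from other positive root subgroups and from $T$ — none of those contributions alter the normal-form coordinates $z_{i+1}, \ldots, z_k$, because the only way a coordinate $z_j$ changes under a left $B$-translation is through the $u_{-\alpha_j}$-direction, which requires the traveling element to have a component along $\g_{\alpha_j}$ after being conjugated into position, and the hypothesis $s_j \neq s_i$ forbids exactly this as long as no earlier $s_l$ ($i+1 \leq l \leq j$) equals $s_i$ either. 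Making this "no negative simple root gets produced" claim precise, most likely by an explicit description of the $B$-cocycle in terms of the $\dot s$'s and the $u_{-\beta}$'s, is where the real work lies; everything else is a routine induction.
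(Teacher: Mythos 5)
Your overall strategy---push $u_{\alpha_i}(t)$ across the factors $p_{i+1},\ldots,p_k$ and argue that the $B$-valued element being handed from one factor to the next never acquires the component that would move a coordinate---is the same as the paper's. The problem is with the inductive invariant you propose to propagate. You track, in effect, that the travelling element ``stays in the span of root vectors $e_\beta$ with $\beta\in\Delta_+$'' (i.e.\ stays in the positive unipotent subgroup $N$), and you yourself flag that upgrading this to ``no component along $\g_{\alpha_{j}}$ is ever produced'' is where the real work lies. That upgrade is not a bookkeeping detail: positivity alone is genuinely too weak to close the induction, since an element of $N$ may well have a component along the \emph{simple} root space $\g_{\alpha_{j+1}}$, and if it did, crossing the factor $p_{j+1}$ would change $z_{j+1}$ and, upon conjugation by $\dot{s}_{\alpha_{j+1}}$, would spawn torus and negative-root components (compare ${\rm Ad}_{u_{-\alpha}(z)^{-1}}(e_\alpha)=e_\alpha+zh_\alpha-z^2e_{-\alpha}$), after which the induction no longer applies. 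So as written the proposal has a gap exactly at the step you identified as ``the real work.''

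The missing idea, which is what the paper uses and which reduces the whole proof to two lines, is to strengthen the invariant: every root occurring in the support of ${\rm Ad}_{(p_{i+1}\cdots p_j)^{-1}}(e_{\alpha_i})$ has $\alpha_i$-coefficient exactly $1$. This is immediate from a Levi-subgroup observation: each $p_l=u_{-\gamma_l(\alpha_l)}(z_l)\dot{\gamma}_l$ lies in $\theta_{\alpha_l}(SL(2,\Cset))$, so $p_{i+1}\cdots p_j$ lies in the Levi subgroup of the standard parabolic determined by $J=\{\alpha_{i+1},\ldots,\alpha_j\}$; since $\alpha_i\notin J$ by hypothesis, $e_{\alpha_i}$ lies in the nilradical of that parabolic, which the Levi normalizes, and conjugation by the Levi shifts weights only by the root lattice of $J$. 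Hence $(p_{i+1}\cdots p_j)^{-1}u_{\alpha_i}(t)\,p_{i+1}\cdots p_j\in N$ for every $i+1\leq j\leq k$, and moreover its support contains no root equal to $\alpha_{j+1}$ (a root with $\alpha_i$-coefficient $1$ cannot be a simple root different from $\alpha_i$, and $\alpha_{j+1}\neq\alpha_i$ for $j+1\leq k$), so it crosses the next factor without disturbing its normal form. Taking $j=k$ gives $[u_{\alpha_i}(t)p_{i+1},\ldots,p_n]=[p_{i+1},\ldots,p_k,\,b(t)p_{k+1},\ldots,p_n]$ with $b(t)\in B$, so $z_k$ is constant along the flow and $\sigma_i(z_k)=0$. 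With this one observation inserted, your argument becomes the paper's proof.
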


\begin{proof} For
$i+1 \leq j \leq n$, let $z_j \in \Cset$ and 
$p_{j} = u_{-\ga_{j}(\alpha_j)}(z_j)\dot{\gamma}_k$. For  $t \in \Cset$, consider  
\[
[u_{\alpha_i}(t)p_{i+1}, \; p_{i+2}, \;\ldots, \;p_n] \in Z_{(s_{i+1}, \ldots, s_n)}.
\] 
For each $i+1 \leq j \leq k$, since $p_{i+1} p_{i+2} \cdots p_j$ lies in the Levi subgroup of the parabolic subgroup of $G$ determined by the 
set of simple roots in $\{\alpha_{i+1}, \ldots, \alpha_j\}$ which does not contain $\alpha_i$, one has
\[
(p_{i+1} p_{i+2} \cdots p_j)^{-1} u_{\alpha_i}(t)p_{i+1} p_{i+2} \cdots p_j \in N,
\]
where $N$ is the unipotent subgroup of $G$ with Lie algebra $\n = \sum_{\alpha \in \Delta_+} \g_\alpha$.
It thus follows from the definition of the vector field $\sigma_i$ that $\sigma_{i}(z_k) = 0$, where
$z_k$ is now regarded as a local function on $Z_{(s_{i+1},\ldots, s_n)}$.
\end{proof}  

The next 
Lemma \ref{le-zizj-3}, which follows directly from Lemma
\ref{le-zizj-1} and Lemma \ref{le-zizj-2}, exhibits a log-canonical affine chart for $\pi_n$ associated to each 
$s \in \{s_1, \ldots, s_n\}$.

\begin{lemma}\label{le-zizj-3}
Let $s \in \{s_1, s_2, \ldots, s_n\}$ and let $i_0 = {\rm max}\{i: 1 \leq i \leq n, \, s_i = s\}$.
Let $\ga = (\ga_1, \ga_2, \ldots, \ga_n)$ be such that
$\ga_{i_0} = s$ and $\ga_{i} = e$ for all $i \neq i_0$. Then in the coordinates $(z_1, z_2, \ldots, z_n)$ on $\O^\ga$ and for all
$1 \leq i < k \leq n$, one has
\begin{equation}\label{eq-zizj-i0}
\{z_i, z_k\} = \begin{cases} \la \alpha_i, \alpha_k\ra z_iz_k, & \;\; 1 \leq i < k < i_0 \;\; \mbox{or} \;\; 
i_0 < i < k \leq n,\\
\la \alpha_i, s(\alpha_k)\ra z_iz_k, & \;\; 1 \leq i \leq i_0 \leq k \leq n, \; i \neq  k.\end{cases}
\end{equation}
\end{lemma}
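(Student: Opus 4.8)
The plan is to read the formula straight off Lemma \ref{le-zizj-1}, using the very simple shape of the subexpression $\ga$. First I would record the partial products $\ga^i = \ga_1\ga_2\cdots\ga_i$: since $\ga_j = e$ for every $j \neq i_0$ and $\ga_{i_0} = s$, one has $\ga^i = e$ for $1 \leq i < i_0$ and $\ga^i = s$ for $i_0 \leq i \leq n$.

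Next I would split into the cases appearing in the statement. For every $i \neq i_0$ the coordinate $\ga_i$ equals $e$, so the first branch of Lemma \ref{le-zizj-1} applies and $\{z_i, z_k\} = \la \ga^i(\alpha_i), \, \ga^k(\alpha_k)\ra z_iz_k$ with no $\sigma_i$-term. When $1 \leq i < k < i_0$, both $\ga^i$ and $\ga^k$ equal $e$, giving $\la \alpha_i, \alpha_k\ra z_iz_k$; when $i_0 < i < k \leq n$, both equal $s$, and $W$-invariance of $\lara$ gives $\la s(\alpha_i), s(\alpha_k)\ra = \la \alpha_i, \alpha_k\ra$; when $1 \leq i < i_0 \leq k \leq n$, we get $\la \alpha_i, s(\alpha_k)\ra z_iz_k$. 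Together these cover every pair $i < k$ with $i \neq i_0$.

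The one remaining case is $i = i_0$, and hence $i_0 < k \leq n$. Here $\ga_{i_0} = s_{i_0} = s$, so the second branch of Lemma \ref{le-zizj-1} gives
\[
\{z_{i_0}, z_k\} = -\la s(\alpha_{i_0}), \, s(\alpha_k)\ra z_{i_0}z_k - \la \alpha_{i_0}, \alpha_{i_0}\ra \sigma_{i_0}(z_k).
\]
Since $s = s_{\alpha_{i_0}}$ sends $\alpha_{i_0}$ to $-\alpha_{i_0}$, the first term equals $\la \alpha_{i_0}, s(\alpha_k)\ra z_{i_0}z_k$. Finally, because $i_0$ is by definition the largest index with $s_{i_0} = s$, one has $s_j \neq s_{i_0}$ for all $i_0 < j \leq k$, so Lemma \ref{le-zizj-2} yields $\sigma_{i_0}(z_k) = 0$; assembling the cases gives the asserted formula.

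The main (and essentially the only) subtlety is this last case, where the maximality of $i_0$ is used precisely in order to invoke Lemma \ref{le-zizj-2} and eliminate the $\sigma_{i_0}$-term; everything else is bookkeeping with the partial products $\ga^i$ together with the $W$-invariance of the bilinear form $\lara$.
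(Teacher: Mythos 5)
Your proof is correct and follows exactly the route the paper intends: the paper states that this lemma "follows directly from Lemma \ref{le-zizj-1} and Lemma \ref{le-zizj-2}," and your case analysis (computing the partial products $\ga^i$, invoking $W$-invariance of $\lara$, and using the maximality of $i_0$ to apply Lemma \ref{le-zizj-2} and kill the $\sigma_{i_0}$-term in the only case where the second branch of Lemma \ref{le-zizj-1} arises) supplies precisely the bookkeeping the paper leaves to the reader.
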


The following Corollary \ref{co-zizj} also follows directly from Lemma \ref{le-zizj-1} and Lemma \ref{le-zizj-2}.

\begin{corollary}\label{co-zizj}
If $\bfu = (s_1, s_2, \ldots, s_n)$ is such that $s_i \neq s_j$ for all $i \neq j$, then the Poisson 
structure $\pi_n$ on $\Zu$ is log-canonical in every one of the $2^n$ affine coordinate charts
$\{\O^\ga: \ga \in \Upu\}$.
\end{corollary}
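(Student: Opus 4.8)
The plan is to derive Corollary \ref{co-zizj} directly from Lemma \ref{le-zizj-1} together with Lemma \ref{le-zizj-2}, exactly as the statement announces. Recall that by Lemma \ref{le-zizj-1}, for any $\gamma = (\gamma_1, \ldots, \gamma_n) \in \Upu$ and $1 \leq i < k \leq n$, the bracket $\{z_i, z_k\}$ in the chart $\O^\gamma$ equals $\langle \gamma^i(\alpha_i), \gamma^k(\alpha_k)\rangle z_iz_k$ when $\gamma_i = e$, and equals $-\langle \gamma^i(\alpha_i), \gamma^k(\alpha_k)\rangle z_iz_k - \langle \alpha_i, \alpha_i\rangle \sigma_i(z_k)$ when $\gamma_i = s_i$. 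Thus the only obstruction to $\pi_n$ being log-canonical in a given chart $\O^\gamma$ is the possible presence of a nonzero term $\sigma_i(z_k)$ for some index $i$ with $\gamma_i = s_i$.

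First I would fix an arbitrary $\gamma \in \Upu$ and an arbitrary pair $1 \leq i < k \leq n$ with $\gamma_i = s_i$, and show $\sigma_i(z_k) = 0$. The hypothesis on $\bfu$ is that $s_l \neq s_m$ whenever $l \neq m$; in particular, since $i < j \leq k$ forces $j \neq i$, we have $s_j \neq s_i$ for every $j$ with $i+1 \leq j \leq k$. This is precisely the hypothesis of Lemma \ref{le-zizj-2}, which therefore yields $\sigma_i(z_k) = 0$. Consequently, by Lemma \ref{le-zizj-1}, for every pair $1 \leq i < k \leq n$ one has
\[
\{z_i, z_k\} = \epsilon_i \langle \gamma^i(\alpha_i), \gamma^k(\alpha_k)\rangle z_iz_k,
\]
where $\epsilon_i = 1$ if $\gamma_i = e$ and $\epsilon_i = -1$ if $\gamma_i = s_i$ (when $\gamma_i = e$ there is no $\sigma_i$ term at all, and when $\gamma_i = s_i$ the $\sigma_i$ term vanishes by the above). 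Since $\gamma$ and the pair $(i,k)$ were arbitrary, this shows $\pi_n$ is log-canonical in every one of the $2^n$ charts $\O^\gamma$, $\gamma \in \Upu$, which is the claim.

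There is essentially no obstacle here: the corollary is a formal consequence of the two cited lemmas, and the only point requiring a moment's thought is the trivial observation that under the standing hypothesis $s_i \neq s_j$ for $i \neq j$, the range $i+1 \leq j \leq k$ automatically avoids the value $i$, so Lemma \ref{le-zizj-2} applies to \emph{every} relevant pair $(i,k)$ in \emph{every} chart simultaneously. (Note the distinct but related statement of Lemma \ref{le-zizj-3} handles the case where a single reflection repeats but one deliberately places the $s$-factor in the last occurrence; here, with no repetitions at all, the argument is uniform across all charts and all indices.) Hence the proof is a two-line deduction, and the main content of the result lies entirely in Lemmas \ref{le-zizj-1} and \ref{le-zizj-2}.
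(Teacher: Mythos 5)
Your proof is correct and is exactly the argument the paper intends: the paper states that Corollary \ref{co-zizj} ``follows directly from Lemma \ref{le-zizj-1} and Lemma \ref{le-zizj-2}'' without further detail, and your deduction — that the no-repetition hypothesis makes Lemma \ref{le-zizj-2} applicable to every pair $(i,k)$ with $\gamma_i = s_i$ in every chart, killing all the $\sigma_i(z_k)$ terms — is precisely that direct deduction, carried out correctly.
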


\section{The Poisson structure $\pi_n$ in affine coordinates charts, II}\label{sec-coor-II}
Throughout $\S$\ref{sec-coor-II}, fix a sequence $\bfu = (s_1, \ldots, s_n)$ of simple reflections, and let $\Zu$ be the
corresponding  Bott-Samelson variety. 
To better understand the Poisson structure $\pi_n$ in the coordinates
$(z_1, z_2, \ldots, z_n)$ on the affine chart ${\mathcal O}^\gamma$ defined in $\S$\ref{coordcharts}, where $\gamma \in \Upu$, one needs to compute more explicitly 
the vector field $\sigma_i$ in Lemma \ref{le-zizj-1} on the Bott-Samelson variety
$Z_{(s_{i+1},\ldots, s_n)}$ for $1 \leq i \leq n-1$. 
For $x \in \b$, define the
vector field $\sigma_x$ on $\Zu$ by 
\begin{equation}\label{eq-Vx}
\sigma_x(p)=\frac{d}{dt}|_{t=0} ((\exp t x)\cdot p),
\qquad p \in \Zu,
\end{equation}
where $\cdot$ denotes the left action of $B \subset P_{s_1}$ on $\Zu$ given in (\ref{eq-P-on-Zu}).
Using some facts on root strings of the root system of $\g$ reviewed in $\S$\ref{subsec-root-strings}, 
for any $\beta \in \Delta_+$ and $e_\beta \in \g_\beta$, we give in
$\S$\ref{subsec-V-beta}
an explicit formula for $\sigma_{e_\beta}$ in 
the coordinates $(z_1, z_2, \ldots, z_n)$ one each  affine chart ${\mathcal O}^\gamma$ of $\Zu$. 
The formula for $\sigma_{e_\beta}$, given in Theorem \ref{epsilonbeta}, is expressed explicitly
in terms of the root strings and the structure constants of $\g$. As a consequence
(see Theorem \ref{th-zizj}), the Poisson structure $\pi_n$
can also be expressed  in each affine coordinate 
chart $\O^\gamma$ in terms of
root strings and the structure constants of $\g$. We believe that our formula for the vector fields
$\sigma_{e_\beta}$ is of interest irrespective of the Poisson structure $\pi_n$.

\subsection{Some lemmas on root strings}\label{subsec-root-strings}
In $\S$\ref{subsec-root-strings}, let
\begin{equation}\label{eq-basis}
\{h_\alpha\}_{\alpha \in \Gamma} \cup \{e_{\pm\alpha} \in \g_{\pm\alpha}\}_{\alpha \in \Delta_+}
\end{equation}
be any basis of $\g$ such that $[e_\alpha, e_{-\alpha}] = h_\alpha$ for each $\alpha \in \Delta_+$. One then has the Lie group homomorphism $\theta_\alpha: SL(2, \Cset) \to G$ for each $\alpha \in \Delta_+$. Let the notation be
as in $\S$\ref{notations}.
For $\alpha$, $\beta\in\Delta$ such that $\alpha+\beta\in\Delta$, let $N_{\alpha,\beta}\neq 0$ be such that $[e_\alpha,e_\beta]=N_{\alpha,\beta}e_{\alpha+\beta}$.

\begin{lemma}
\label{ddtcommute}
For $\alpha\in \Delta_+$, one has
\begin{align}
\label{eb1}
u_\alpha(t)u_{\alpha}(z)\dot{s}_\alpha&=u_{\alpha}(t+z)\dot{s}_\alpha, &&t,z\in\mathbb{C}, \\
\label{eb2}
u_\alpha(t)u_{-\alpha}(z)&=u_{-\alpha}\left( \frac{z}{1+tz} \right)u_\alpha(t(1+tz))\alpha^\vee(1+tz), && t,z\in\mathbb{C}, 1+tz\neq 0, \\
u_{-\alpha}(t)&=u_\alpha\left( \frac{1}{t} \right)\dot{s}_\alpha u_\alpha(t)\alpha^\vee(t), &&t\in\mathbb{C}^\times. \label{change}
\end{align}
For $\alpha, \beta\in\Gamma$ and $\alpha \neq \beta$, one has . 
\begin{equation}\label{ezcommutation}
u_\beta(t)\beta^\vee(t)u_{-\alpha}(z)=
u_{-\alpha}\left( t^{\frac{-2\langle \alpha,\beta\rangle}{\langle \beta,\beta \rangle}}z \right) u_\beta(t)\beta^\vee(t),
\;\;\;\;\;\;\;t\in\mathbb{C}^\times, \, z \in {\mathbb{C}}.
\end{equation}
\end{lemma}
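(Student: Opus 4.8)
The plan is to verify each of the four identities \eqref{eb1}--\eqref{ezcommutation} by reducing everything to an explicit $2 \times 2$ matrix computation inside $\mathrm{SL}(2, \Cset)$ and then applying the homomorphisms $\theta_\alpha$, together with the commutation rule \eqref{ezcommutation} which mixes two different simple roots. Recall from $\S$\ref{notations} that for $\alpha \in \Delta_+$ we have $u_{\pm\alpha}(z) = \theta_\alpha\!\left(\begin{pmatrix} 1 & z \\ 0 & 1 \end{pmatrix}\right)$ or $\theta_\alpha\!\left(\begin{pmatrix} 1 & 0 \\ z & 1 \end{pmatrix}\right)$, that $\alpha^\vee(t) = \theta_\alpha\!\left(\begin{pmatrix} t & 0 \\ 0 & t^{-1} \end{pmatrix}\right)$, and that $\dot{s}_\alpha = u_\alpha(-1) u_{-\alpha}(1) u_\alpha(-1) = \theta_\alpha\!\left(\begin{pmatrix} 0 & -1 \\ 1 & 0 \end{pmatrix}\right)$.

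For \eqref{eb1}: since $\theta_\alpha$ is a homomorphism, it suffices to check $\begin{pmatrix} 1 & t \\ 0 & 1 \end{pmatrix}\begin{pmatrix} 1 & z \\ 0 & 1 \end{pmatrix}\begin{pmatrix} 0 & -1 \\ 1 & 0 \end{pmatrix} = \begin{pmatrix} 1 & t+z \\ 0 & 1 \end{pmatrix}\begin{pmatrix} 0 & -1 \\ 1 & 0 \end{pmatrix}$, which is immediate from $\begin{pmatrix} 1 & t \\ 0 & 1 \end{pmatrix}\begin{pmatrix} 1 & z \\ 0 & 1 \end{pmatrix} = \begin{pmatrix} 1 & t+z \\ 0 & 1 \end{pmatrix}$. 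For \eqref{eb2} and \eqref{change}, the same strategy works: \eqref{eb2} is the Bruhat-style factorization of $\begin{pmatrix} 1 & t \\ 0 & 1 \end{pmatrix}\begin{pmatrix} 1 & 0 \\ z & 1 \end{pmatrix} = \begin{pmatrix} 1+tz & t \\ z & 1 \end{pmatrix}$ as a product of a lower-unitriangular, an upper-unitriangular, and a diagonal matrix, valid exactly when $1+tz \neq 0$ (the $(1,1)$-entry must be invertible); one reads off the entries $\frac{z}{1+tz}$, $t(1+tz)$, and $1+tz$ by direct multiplication. Identity \eqref{change} is the special case / rearrangement expressing $\begin{pmatrix} 1 & 0 \\ t & 1 \end{pmatrix} = \begin{pmatrix} 1 & 1/t \\ 0 & 1 \end{pmatrix}\begin{pmatrix} 0 & -1 \\ 1 & 0 \end{pmatrix}\begin{pmatrix} 1 & t \\ 0 & 1 \end{pmatrix}\begin{pmatrix} t & 0 \\ 0 & t^{-1} \end{pmatrix}$, again a one-line matrix check valid for $t \neq 0$; applying $\theta_\alpha$ gives the claim since $\theta_\alpha\!\left(\begin{pmatrix} 0 & -1 \\ 1 & 0 \end{pmatrix}\right) = \dot{s}_\alpha$.

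The one identity that is \emph{not} internal to a single $\mathrm{SL}(2)$ is \eqref{ezcommutation}, and this is the step I expect to be the main (though still mild) obstacle, since it genuinely uses the root-system geometry. The key point is that the torus element $\beta^\vee(t)$ acts on the root subgroup $U_{-\alpha} = \{u_{-\alpha}(z)\}$ by conjugation through the character $(-\alpha)$ evaluated on the cocharacter $\beta^\vee$: explicitly, $\beta^\vee(t)\, u_{-\alpha}(z)\, \beta^\vee(t)^{-1} = u_{-\alpha}\!\left(t^{\langle -\alpha,\, \beta^\vee\rangle} z\right)$, because $\mathrm{Ad}(\beta^\vee(t)) e_{-\alpha} = t^{(-\alpha)(\beta^\vee)} e_{-\alpha}$ and $u_{-\alpha}(z) = \exp(z e_{-\alpha})$. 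Since $\langle \alpha, \beta^\vee \rangle = \frac{2\langle \alpha, \beta\rangle}{\langle \beta, \beta\rangle}$, the exponent is $-\frac{2\langle \alpha, \beta \rangle}{\langle \beta, \beta\rangle}$, matching the statement. Finally one commutes $u_\beta(t)$ past $u_{-\alpha}(z)$: for $\alpha, \beta \in \Gamma$ distinct simple roots, $\alpha - \beta$ is never a root and $-\alpha + \beta$ is never a root either (simple roots are not comparable under subtraction), so by the Chevalley commutator formula $u_\beta(t)$ and $u_{-\alpha}(z)$ commute — there are no "mixed" terms $u_{-\alpha+k\beta}$ to worry about. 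Combining the two observations, $u_\beta(t)\beta^\vee(t)\, u_{-\alpha}(z) = u_\beta(t)\, u_{-\alpha}(z)\, \beta^\vee(t) \cdot \beta^\vee(t)^{-1}\big(\cdots\big)$; more cleanly, move $\beta^\vee(t)$ to the right first using the torus conjugation, getting $u_\beta(t)\, u_{-\alpha}\!\left(t^{-2\langle\alpha,\beta\rangle/\langle\beta,\beta\rangle} z\right)\beta^\vee(t)$, and then use commutativity of $u_\beta(t)$ with $u_{-\alpha}(\,\cdot\,)$ to swap the first two factors, yielding \eqref{ezcommutation}. The only subtlety to flag is the case-free justification that no root $-\alpha + k\beta$ with $k \geq 1$ occurs, which follows because $\alpha$ and $\beta$ being distinct simple roots forces the $\beta$-string through $-\alpha$ to be a singleton.
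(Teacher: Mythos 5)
Your proof is correct and follows essentially the same route as the paper, which likewise disposes of \eqref{eb1}--\eqref{change} by direct $2\times 2$ computations in $\mathrm{SL}(2,\Cset)$ pushed through $\theta_\alpha$, and derives \eqref{ezcommutation} from the torus conjugation formula together with the fact that the root subgroups for $-\alpha$ and $\beta$ commute. One small caveat: the $\beta$-string through $-\alpha$ need not be a singleton (it can extend in the $-\beta$ direction, e.g.\ $-\alpha-\beta$ is a root in type $A_2$); what you actually need, and what your sign argument correctly establishes, is that no $i\beta - j\alpha$ with $i,j\geq 1$ is a root, so the Chevalley commutator of $u_\beta(t)$ and $u_{-\alpha}(z)$ is trivial.
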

\begin{proof}
Identities (\ref{eb2}) and (\ref{change}) follow from computations in $\mathrm{SL}(2,\mathbb{C})$, and (\ref{ezcommutation})
follows from the fact that the two root subgroups corresponding to $-\alpha$ and $\beta$ commute.
\end{proof}

Let $\alpha$ and $\beta$ be two linearly independent roots, $\alpha \in \Delta_+$, and let
 $\{\beta+j\alpha: -p \leq j \leq q\}$, where $p$ and $q$ are non-negative integers, be the $\alpha$-string through $\beta$. 
Then the subspace 
\[
L=\sum_{j=-p}^q \mathfrak{g}_{\beta+j\alpha}
\] 
of $\mathfrak{g}$ becomes an $SL(2, \mathbb{C})$-module via the 
group homomorphism $\theta_\alpha: SL(2, \mathbb{C}) \to G$ and the adjoint representation of $G$ on
$\mathfrak{g}$. On the other hand, let $L^{p+q}$ be the vector space of
homogeneous polynomials in $(x, y)$ of degree $p+q$ with the action of
$\mathrm{SL}(2,\mathbb{C})$ by 
\[
\left(\begin{pmatrix}
a & b \\
c & d 
\end{pmatrix} \cdot f\right)(x,y)=f\left(  (x , y)  \begin{pmatrix} 
a & b \\
c & d 
\end{pmatrix} \right) = f \left(ax+cy, \, bx+dy\right), \;\; \begin{pmatrix}
a & b \\
c & d 
\end{pmatrix} \in SL(2, \Cset).
\]
Let $\{u_0,\ldots, u_{p+q}\}$ be the basis of $L^{p+q}$ given by
\begin{equation}\label{eq-ui}
u_i=\varepsilon_0\varepsilon_1\cdots \varepsilon_{i-1}\binom{p+q}{i}x^iy^{p+q-i},\qquad 0 \leq i \leq p+q,
\end{equation}
where  for 
$0 \leq j \leq p+q-1$, $\varepsilon_j \in \Cset$ is defined by
\begin{equation}\label{epsilon-i}
\varepsilon_j=\frac{j+1}{N_{\alpha,\beta-(p-j)\alpha}},
\end{equation}
and it is understood that
$\varepsilon_0\varepsilon_1\cdots \varepsilon_{i-1}=1$ when $i = 0$ in (\ref{eq-ui}).

\begin{lemma}
\label{sl2lemma}
With the notation as above, the linear map
\begin{equation}\label{chi}
\chi:\; L\longrightarrow L^{p+q}:\;\; \chi(e_{\beta+j\alpha})=u_{p+j},\quad -p\leq j \leq q,
\end{equation}
is an $\mathrm{SL}(2,\mathbb{C})$-equivariant isomorphism.
\end{lemma}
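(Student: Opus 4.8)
The plan is to recognize both $L$ and $L^{p+q}$ as irreducible $\sl(2,\Cset)$-modules of dimension $p+q+1$ and to show that $\chi$ is, up to a scalar, the unique module homomorphism between them. First I would recall the standard root-string facts: since $\{\beta+j\alpha:-p\le j\le q\}$ is the unbroken $\alpha$-string through $\beta$, the subspace $L=\sum_{j=-p}^{q}\g_{\beta+j\alpha}$ is stable under $\mathrm{ad}(\theta_\alpha(\sl(2,\Cset)))$, and $h_\alpha$ acts on $\g_{\beta+j\alpha}$ by the scalar $(\beta+j\alpha)(h_\alpha)=(p-q)+2j$, so $L$ has one-dimensional weight spaces whose weights form the single unbroken string $-(p+q),-(p+q)+2,\dots,p+q$; such an $\sl(2,\Cset)$-module is irreducible. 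On the other side, $L^{p+q}$ is the classical irreducible $SL(2,\Cset)$-module of binary forms of degree $p+q$. Since $SL(2,\Cset)$ is connected, it is enough to prove that $\chi$ is $\sl(2,\Cset)$-equivariant. The three matrices that $\theta_\alpha$ sends to $h_\alpha$, $e_\alpha$, $e_{-\alpha}$ act on $L$ via $\mathrm{ad}$ as $\mathrm{ad}_{h_\alpha}$, $\mathrm{ad}_{e_\alpha}$, $\mathrm{ad}_{e_{-\alpha}}$; on $L^{p+q}$, differentiating the corresponding one-parameter subgroups shows that $\bigl(\begin{smallmatrix}1&0\\0&-1\end{smallmatrix}\bigr)$ acts on $x^{i}y^{p+q-i}$ by the scalar $2i-(p+q)$, that $\bigl(\begin{smallmatrix}0&1\\0&0\end{smallmatrix}\bigr)$ acts as the operator $x\,\pa/\pa y$, and that $\bigl(\begin{smallmatrix}0&0\\1&0\end{smallmatrix}\bigr)$ acts as $y\,\pa/\pa x$.

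With these formulas in hand, equivariance for $\mathrm{ad}_{h_\alpha}$ is immediate: $\chi$ sends $e_{\beta+j\alpha}$, of weight $(p-q)+2j$, to $u_{p+j}$, and $x^{p+j}y^{q-j}$ has weight $2(p+j)-(p+q)=(p-q)+2j$, so $\chi$ is weight-preserving. The substantive verification is equivariance for $\mathrm{ad}_{e_\alpha}$. Using the elementary identity $\binom{p+q}{i}(p+q-i)=(i+1)\binom{p+q}{i+1}$ one finds
\[
\Bigl(\begin{smallmatrix}0&1\\0&0\end{smallmatrix}\Bigr)\cdot u_i=\frac{i+1}{\varepsilon_i}\,u_{i+1}\qquad(0\le i\le p+q-1),
\]
and by the definition \eqref{epsilon-i} of $\varepsilon_i$ one has $(i+1)/\varepsilon_i=N_{\alpha,\beta+(i-p)\alpha}$, which is exactly the coefficient in $\chi\bigl(\mathrm{ad}_{e_\alpha}e_{\beta+(i-p)\alpha}\bigr)=N_{\alpha,\beta+(i-p)\alpha}\,u_{i+1}$; at the top of the string, $i=p+q$, both sides vanish.

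The step I expect to be the main obstacle is equivariance for $\mathrm{ad}_{e_{-\alpha}}$, because a head-on computation needs the value of the product $N_{\alpha,\beta+j\alpha}N_{-\alpha,\beta+(j+1)\alpha}$, whose normalization requires care. I would close the argument conceptually instead. The two verifications above already exhibit $\chi$ as a linear isomorphism (it carries $\{e_{\beta+j\alpha}\}$ onto $\{u_i\}$) that intertwines $\mathrm{ad}_{h_\alpha}$ and $\mathrm{ad}_{e_\alpha}$ and sends $e_{\beta-p\alpha}$ to $u_0$. Since every $N_{\alpha,\beta+j\alpha}$ with $-p\le j\le q-1$ is nonzero, the vectors $\mathrm{ad}_{e_\alpha}^{\,k}e_{\beta-p\alpha}$ ($0\le k\le p+q$) form a basis of $L$, so $\chi$ is completely determined by those two properties, namely $\chi\bigl(\mathrm{ad}_{e_\alpha}^{\,k}e_{\beta-p\alpha}\bigr)={\bigl(\begin{smallmatrix}0&1\\0&0\end{smallmatrix}\bigr)}^{k}\cdot u_0$. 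On the other hand, $L$ and $L^{p+q}$ are irreducible of the same dimension, so Schur's lemma supplies an $SL(2,\Cset)$-module isomorphism $\phi\colon L\to L^{p+q}$; being weight-preserving it may be rescaled so that $\phi(e_{\beta-p\alpha})=u_0$, and then $\phi$ satisfies the identical recursion, forcing $\phi=\chi$. Hence $\chi$ is an $SL(2,\Cset)$-module isomorphism. (Alternatively, the $\mathrm{ad}_{e_{-\alpha}}$-equivariance can be checked head-on: applying $[\mathrm{ad}_{e_\alpha},\mathrm{ad}_{e_{-\alpha}}]=\mathrm{ad}_{h_\alpha}$ to $e_{\beta+(j+1)\alpha}$ and telescoping the resulting recursion, with the top term set to $0$, yields $N_{\alpha,\beta+j\alpha}N_{-\alpha,\beta+(j+1)\alpha}=(q-j)(p+j+1)$, which is exactly the value the action of $\bigl(\begin{smallmatrix}0&0\\1&0\end{smallmatrix}\bigr)$ on the $u_i$ demands.)
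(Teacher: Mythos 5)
Your proof is correct and follows essentially the same route as the paper's: both identify $L$ and $L^{p+q}$ as irreducible $SL(2,\Cset)$-modules of dimension $p+q+1$, invoke Schur's lemma for the unique equivariant isomorphism normalized by $e_{\beta-p\alpha}\mapsto u_0$, and pin it down by the computation involving $\mathrm{ad}_{e_\alpha}$ and the $\varepsilon_j$. Your explicit check of the $h_\alpha$- and $e_\alpha$-equivariance, together with the observation that the vectors $\mathrm{ad}_{e_\alpha}^{k}e_{\beta-p\alpha}$ span $L$, is precisely the ``straightforward calculation'' the paper leaves to the reader, and it neatly sidesteps the $\mathrm{ad}_{e_{-\alpha}}$ verification.
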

\begin{proof} The two irreducible representations of $SL(2, \mathbb{C})$ on $L$ and on
$L^{p+q}$, being of the same dimension, must be isomorphic, and by Schur's lemma, there
is a unique $SL(2, \mathbb{C})$-equivariant isomorphism $\chi: L \to L^{p+q}$ 
such that $\chi(e_{\beta-p\alpha}) = u_0$. Straightforward calculations show that 
$\chi$ must be given as in (\ref{chi}).
See also \cite[Lemma 6.2.2]{Car}.
\end{proof}

The following Lemma \ref{useful} is the key to the proof of Theorem \ref{epsilonbeta} in $\S$\ref{subsec-V-beta}.

\begin{lemma}
\label{useful}
Let $\alpha \in \Delta_+$ and $\beta \in \Delta$ be linearly independent, and let
$\{\beta+j\alpha: -p \leq j \leq q\}$ be the $\alpha$-string through $\beta$. Then for any $t\in\mathbb{C}$, one has
\begin{align}
\label{+comm}
\mathrm{Ad}_{(u_\alpha (t) \dot{s}_{\alpha})^{-1}}(e_\beta) &= \sum_{j=0}^q(-1)^p\frac{\varepsilon_0\varepsilon_1\cdots \varepsilon_{p-1}}{\varepsilon_0\varepsilon_1\cdots\varepsilon_{q-j-1}}\binom{p+j}{j}t^j e_{s_\alpha(\beta)-j\alpha}, \\
\label{-comm}
\mathrm{Ad}_{(u_{-\alpha}(t))^{-1}}(e_\beta)&=\sum_{j=0}^p(-1)^j \varepsilon_{p-j}\varepsilon_{p-j+1}\cdots\varepsilon_{p-1}\binom{q+j}{j}t^je_{\beta-j\alpha}.
\end{align}
\end{lemma}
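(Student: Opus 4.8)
The plan is to prove Lemma \ref{useful} by working inside the finite-dimensional $SL(2,\Cset)$-module $L = \sum_{j=-p}^q \g_{\beta+j\alpha}$ and transporting everything to the explicit model $L^{p+q}$ via the isomorphism $\chi$ of Lemma \ref{sl2lemma}. Both identities amount to computing the action of a specific element of $SL(2,\Cset)$ on the basis vector $e_\beta$ (which sits in position $p$ inside $L$, i.e. $\chi(e_\beta) = u_p$), so the whole computation reduces to a polynomial identity in the two-variable model that can be checked directly.

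First I would set up \eqref{-comm}. Under $\theta_\alpha$, the element $u_{-\alpha}(t) = \theta_\alpha\left(\begin{pmatrix} 1 & 0 \\ t & 1\end{pmatrix}\right)$, so $(u_{-\alpha}(t))^{-1}$ corresponds to $\begin{pmatrix} 1 & 0 \\ -t & 1\end{pmatrix}$. Using the $SL(2,\Cset)$-action on $L^{p+q}$ recalled in the excerpt, $\begin{pmatrix} 1 & 0 \\ -t & 1\end{pmatrix}$ sends $f(x,y)$ to $f(x - ty,\, y)$. Applying this to $u_p = \varepsilon_0\cdots\varepsilon_{p-1}\binom{p+q}{p} x^p y^q$ and expanding $(x-ty)^p$ by the binomial theorem, I collect the coefficient of $x^{p-j}y^{q+j}$ and compare it with $\chi(e_{\beta-j\alpha}) = u_{p-j} = \varepsilon_0\cdots\varepsilon_{p-j-1}\binom{p+q}{p-j} x^{p-j}y^{q+j}$. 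The ratio of binomial coefficients $\binom{p}{j}\binom{p+q}{p} / \binom{p+q}{p-j}$ simplifies to $\binom{q+j}{j}$, and the ratio $\varepsilon_0\cdots\varepsilon_{p-1}/(\varepsilon_0\cdots\varepsilon_{p-j-1}) = \varepsilon_{p-j}\varepsilon_{p-j+1}\cdots\varepsilon_{p-1}$; together with the sign $(-t)^j = (-1)^j t^j$ this yields \eqref{-comm}. The only slightly delicate point is the boundary behaviour (that the sum genuinely terminates at $j=p$, consistent with the string length), which is automatic since $x^p$ has degree $p$.

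For \eqref{+comm} the strategy is the same, but the group element is $(u_\alpha(t)\ds_\alpha)^{-1} = \ds_\alpha^{-1} u_\alpha(-t)$. Here $u_\alpha(-t)$ corresponds to $\begin{pmatrix} 1 & -t \\ 0 & 1\end{pmatrix}$ and $\ds_\alpha = u_\alpha(-1)u_{-\alpha}(1)u_\alpha(-1)$ corresponds (under $\theta_\alpha$, via a direct $SL(2,\Cset)$ computation) to $\begin{pmatrix} 0 & -1 \\ 1 & 0\end{pmatrix}$, so $\ds_\alpha^{-1}$ corresponds to $\begin{pmatrix} 0 & 1 \\ -1 & 0\end{pmatrix}$. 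Multiplying, $(u_\alpha(t)\ds_\alpha)^{-1}$ corresponds to $\begin{pmatrix} 0 & 1 \\ -1 & t\end{pmatrix}$, whose action sends $f(x,y) \mapsto f(-y,\, x + ty)$. Applying this to $u_p = \varepsilon_0\cdots\varepsilon_{p-1}\binom{p+q}{p}x^p y^q$ gives $\varepsilon_0\cdots\varepsilon_{p-1}\binom{p+q}{p}(-y)^p (x+ty)^q$; expanding $(x+ty)^q = \sum_{j} \binom{q}{j} t^{q-j} x^j y^{q-j}$ and writing the result in terms of the basis vectors $u_{?}$, the monomial $x^j y^{p+q-j}$ corresponds to $\chi(e_{\beta + (j-p)\alpha})$. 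Since $s_\alpha(\beta) = \beta - (q-p)\alpha$ (the reflection flips the string), the index $j$ of the monomial $x^{q-j}y^{p+j}$ matches $e_{s_\alpha(\beta) - j\alpha}$ after the reindexing in the statement; matching the numerical coefficients produces the factor $(-1)^p$, the binomial $\binom{p+j}{j}$ (again after simplifying a ratio of three binomial coefficients), and the $\varepsilon$-ratio $\frac{\varepsilon_0\cdots\varepsilon_{p-1}}{\varepsilon_0\cdots\varepsilon_{q-j-1}}$.

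The main obstacle I anticipate is bookkeeping rather than conceptual: keeping the three indexing conventions consistent — the position of $e_{\beta+j\alpha}$ inside $L$, the degree of the corresponding monomial in $L^{p+q}$, and the reversal induced by $s_\alpha$ — and then carefully simplifying the resulting products of binomial coefficients and $\varepsilon_j$'s into exactly the closed forms claimed. In particular verifying that $\ds_\alpha$ maps under $\theta_\alpha$ to the matrix $\begin{pmatrix} 0 & -1 \\ 1 & 0\end{pmatrix}$ (a short but essential $2\times 2$ computation using \eqref{eq-dots}) and confirming that the various ratios of binomial coefficients collapse correctly are the places where sign or off-by-one errors are most likely, so I would do those two verifications with care.
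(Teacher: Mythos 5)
Your proposal is correct and follows essentially the same route as the paper's proof: both transport the computation to the polynomial model $L^{p+q}$ via the equivariant isomorphism $\chi$ of Lemma \ref{sl2lemma}, act on $u_p$ by the matrices $\left(\begin{smallmatrix} 0 & 1 \\ -1 & t\end{smallmatrix}\right)$ and $\left(\begin{smallmatrix} 1 & 0 \\ -t & 1\end{smallmatrix}\right)$, expand by the binomial theorem, and match coefficients against the $u_i$. The only slip is the intermediate assertion $s_\alpha(\beta)=\beta-(q-p)\alpha$, which should read $s_\alpha(\beta)=\beta+(q-p)\alpha$ (since $2\langle\beta,\alpha\rangle/\langle\alpha,\alpha\rangle=p-q$); your final identification of the monomial $x^{q-j}y^{p+j}$ with $e_{s_\alpha(\beta)-j\alpha}$ is nonetheless the correct one, so this does not affect the argument.
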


\begin{proof} By Lemma \ref{sl2lemma}, one has
\begin{align*}
\chi\left( \mathrm{Ad}_{(u_\alpha (t) \dot{s}_{\alpha})^{-1}}(e_\beta) \right) &= \begin{pmatrix}
0 & 1 \\
-1 & t
\end{pmatrix}\cdot u_p \\
&=\varepsilon_0 \varepsilon_1\cdots \varepsilon_{p-1}\binom{p+q}{p}(-y)^p(x+ty)^q \\
&=\varepsilon_0 \varepsilon_1\cdots \varepsilon_{p-1}\binom{p+q}{p}(-y)^p\left(\sum_{j=0}^q\binom{q}{j}t^jy^jx^{q-j} \right) \\
&=\sum_{j=0}^q(-1)^p\frac{\varepsilon_0 \varepsilon_1\cdots \varepsilon_{p-1}}{\varepsilon_0\varepsilon_1\cdots\varepsilon_{q-j-1}}\binom{p+j}{j}t^ju_{q-j}.
\end{align*}
It follows that
\[
\mathrm{Ad}_{(u_\alpha (t) \dot{s}_{\alpha})^{-1}}(e_\beta) = \sum_{j=0}^q(-1)^p\frac{\varepsilon_0\varepsilon_1\cdots \varepsilon_{p-1}}{\varepsilon_0\varepsilon_1\cdots\varepsilon_{q-j-1}}\binom{p+j}{j}t^j e_{\beta+(q-p-j)\alpha}.
\]
As (see for example, \cite[Proposition 25.1]{Humphreys}) $\displaystyle \frac{2\la \beta, \alpha\ra}{\la \alpha, \alpha\ra}
= p-q$, one has, for any $j \in \Zset$,
\[
s_\alpha (\beta) - j\alpha = \beta - \frac{2\la \beta, \alpha\ra}{\la \alpha, \alpha\ra} \alpha - j \alpha =
\beta +(q-p-j)\alpha,
\]
from which  (\ref{+comm}) follows. One proves (\ref{-comm}) similarly (see also Lemma 6.2.1 in \cite{Car}). 
\end{proof}

To unify the two formulas in (\ref{+comm}) and (\ref{-comm}), for $\alpha \in \Delta_+$, $\kappa \in \{s_\alpha, e\}$,
and $t \in \Cset$, let
\begin{equation}\label{eq-p-alpha}
p_{\kappa, \alpha}(t) = u_{-\kappa(\alpha)}(t)\dot{\kappa} \in P_{s_\alpha},
\end{equation}
and for $\beta \in \Delta$, $\beta \neq \pm \alpha$, as in Lemma \ref{useful}, let 
\begin{align}\label{constants-1}
c_{\alpha,\beta}^{\kappa,j} &=(-1)^p\frac{\varepsilon_0\varepsilon_1\cdots \varepsilon_{p-1}}{\varepsilon_0\varepsilon_1\cdots\varepsilon_{q-j-1}}\binom{p+j}{j}, &j=0,\ldots,  q \textrm{ and }\kappa=s_\alpha, \\
\label{constants-2}
c_{\alpha,\beta}^{\kappa,j} &=(-1)^j\varepsilon_{p-j}\varepsilon_{p-j+1}\cdots\varepsilon_{p-1}\binom{q+j}{j}, &j=0,\ldots,  p \textrm{ and }\kappa=e.
\end{align}
Lemma \ref{useful} can now be reformulated as follows.

\begin{lemma}\label{useful-1}
Let $\alpha \in \Delta_+$ and $\beta \in \Delta$ be linearly independent. Then for $\kappa \in \{s_\alpha, e\}$ and $t \in \Cset$,
\begin{equation}\label{eq-combine}
{\rm Ad}_{(p_{\kappa, \alpha}(t))^{-1}}(e_\beta)
= \sum_{\stackrel{j \geq 0,}{\kappa(\beta) - j\alpha \in \Delta}} c_{\alpha, \beta}^{\kappa, j}\,
t^j \,e_{\kappa(\beta)-j\alpha}.
\end{equation}
\end{lemma}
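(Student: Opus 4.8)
The statement merely repackages Lemma \ref{useful} using the unifying notation of \eqref{eq-p-alpha}, \eqref{constants-1}, and \eqref{constants-2}, so the plan is to check that \eqref{eq-combine} reduces to \eqref{+comm} in the case $\kappa = s_\alpha$ and to \eqref{-comm} in the case $\kappa = e$. First I would treat $\kappa = s_\alpha$. Here $p_{\kappa, \alpha}(t) = u_{-s_\alpha(\alpha)}(t)\dot{s}_\alpha = u_{\alpha}(t)\dot{s}_\alpha$, since $s_\alpha(\alpha) = -\alpha$; thus ${\rm Ad}_{(p_{\kappa, \alpha}(t))^{-1}}(e_\beta) = {\rm Ad}_{(u_\alpha(t)\dot{s}_\alpha)^{-1}}(e_\beta)$, which by \eqref{+comm} equals $\sum_{j=0}^q c_{\alpha, \beta}^{s_\alpha, j} t^j e_{s_\alpha(\beta) - j\alpha}$ with $c_{\alpha, \beta}^{s_\alpha, j}$ as in \eqref{constants-1}. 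To match the index set in \eqref{eq-combine}, I would observe that $\{s_\alpha(\beta) - j\alpha : 0 \leq j \leq q\}$ is exactly the set of roots of the form $s_\alpha(\beta) - j\alpha$ with $j \geq 0$: indeed the $\alpha$-string through $\beta$ has length $p + q$, and applying $s_\alpha$ reverses it, so $s_\alpha(\beta) = \beta + (q-p)\alpha$ lies at the ``top'' relative to subtracting $\alpha$, and $s_\alpha(\beta) - j\alpha \in \Delta$ precisely for $0 \leq j \leq p+q - (q) = \ldots$; more directly, $s_\alpha(\beta) - j\alpha = \beta + (q-p-j)\alpha$ is a root iff $-p \leq q - p - j \leq q$, i.e. iff $0 \leq j \leq q$. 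Hence the sum over $\{j \geq 0 : s_\alpha(\beta) - j\alpha \in \Delta\}$ is literally the sum over $j = 0, \ldots, q$, and \eqref{eq-combine} holds in this case.

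Next I would treat $\kappa = e$. Then $\dot{\kappa} = e$ and $p_{\kappa, \alpha}(t) = u_{-\alpha}(t)$, so ${\rm Ad}_{(p_{\kappa, \alpha}(t))^{-1}}(e_\beta) = {\rm Ad}_{(u_{-\alpha}(t))^{-1}}(e_\beta)$, which by \eqref{-comm} equals $\sum_{j=0}^p c_{\alpha, \beta}^{e, j} t^j e_{\beta - j\alpha}$ with $c_{\alpha, \beta}^{e, j}$ as in \eqref{constants-2}. Here $\kappa(\beta) = \beta$, and $\beta - j\alpha \in \Delta$ for $j \geq 0$ iff $-p \leq -j \leq q$, i.e. iff $0 \leq j \leq p$, so again the index set $\{j \geq 0 : \kappa(\beta) - j\alpha \in \Delta\}$ coincides with $\{0, 1, \ldots, p\}$ and \eqref{eq-combine} follows.

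The only genuine content beyond unpacking definitions is the bookkeeping that the constraint ``$\kappa(\beta) - j\alpha \in \Delta$, $j \geq 0$'' cuts out exactly the summation range ($j \leq q$ when $\kappa = s_\alpha$, $j \leq p$ when $\kappa = e$) appearing in Lemma \ref{useful}; this uses only the standard fact that the $\alpha$-string through $\beta$ is unbroken together with $\tfrac{2\la \beta, \alpha\ra}{\la \alpha, \alpha\ra} = p - q$, both already invoked in the proof of Lemma \ref{useful}. I do not anticipate any real obstacle; the proof is a two-case verification and amounts to a remark.
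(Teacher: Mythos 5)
Your proposal is correct and matches the paper's own proof: both reduce the lemma to checking that the condition $\kappa(\beta)-j\alpha\in\Delta$, $j\geq 0$, carves out exactly the ranges $0\leq j\leq q$ (for $\kappa=s_\alpha$) and $0\leq j\leq p$ (for $\kappa=e$) from Lemma \ref{useful}, using the unbroken $\alpha$-string and $\tfrac{2\la\beta,\alpha\ra}{\la\alpha,\alpha\ra}=p-q$. The paper phrases the $\kappa=s_\alpha$ case slightly differently (via $s_\alpha(\beta)-j\alpha\in\Delta\iff\beta+j\alpha\in\Delta$), but this is the same bookkeeping.
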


\begin{proof} Let $j \in \Zset$ and $j \geq 0$. 
When $\kappa = e$, $\kappa(\beta)-j\alpha \in \Delta$ if and only if $\beta - j\alpha \in \Delta$, which is the same as
$0 \leq j \leq p$. When $\kappa = s_\alpha$, $\kappa(\beta)-j\alpha \in \Delta$ if and only if
$s_\alpha(\beta+j\alpha) \in \Delta$, which is the same as $\beta + j\alpha \in \Delta$, which, in turn, is the same as
$0 \leq j \leq q$.
\end{proof}

\begin{remark}
\label{chevbase}
{\em Recall that a basis $\{h_\alpha\}_{\alpha \in \Gamma} \cup \{e_{\alpha} \in \g_{\alpha}\}_{\alpha \in \Delta}$
of $\g$ is said to be a Chevalley basis if 
$[e_\alpha, e_{-\alpha}] = h_\alpha$ for all $\alpha \in \Delta$, and if 
for all $\alpha,\beta\in\Delta$ such that $\alpha+\beta\in\Delta$, one has $N_{\alpha,\beta}=-N_{-\alpha,-\beta}$.
If $\{h_\alpha\}_{\alpha \in \Gamma} \cup \{e_{\alpha} \in \g_{\alpha}\}_{\alpha \in \Delta}$ is a Chevalley basis of $\mathfrak{g}$,
by \cite[Theorem 4.1.2]{Car} and \cite[Theorem 25.2]{Humphreys}, $N_{\alpha, \beta} = \pm (p+1)$ for any
roots $\alpha$ and $\beta$ such that $\alpha + \beta \in \Delta$, where $p$ is the largest 
non-negative integer such that $\beta -p \alpha \in \Delta$. Thus, for 
$\alpha$ and $\beta$ as in Lemma \ref{useful} and for every $0 \leq j \leq p+q-1$,
one has $\varepsilon_j = \pm 1$, and consequently all the coefficients $c_{\alpha, \beta}^{\kappa, j}$'s appearing in 
\eqref{eq-combine} are integers. \hfill $\diamond$
}
\end{remark}

\subsection{The vector field $\sigma_{e_\beta}$ in coordinates}\label{subsec-V-beta}
Fix again $\bfu = (s_1, \ldots, s_n) = (s_{\alpha_1}, \ldots, s_{\alpha_n})$ be a sequence of simple reflections, and let $\Zu$ be the
corresponding Bott-Samelson variety. Let $\{e_\alpha \in \g_\alpha: \alpha \in \Gamma\}$ be a set of root vectors for the simple roots, and extend it
to a basis $\{h_\alpha\}_{\alpha \in \Gamma} \cup \{e_{\alpha} \in \g_{\alpha}\}_{\alpha \in \Delta}$ of $\g$
such that $[e_\alpha, e_{-\alpha}] = h_\alpha$ for all $\alpha \in \Delta$.
Recall from (\ref{eq-Vx}) that for any $x \in \b$, $\sigma_x$ is the vector field on $\Zu$ generating the action of $B$ on $\Zu$ in the direction of $x$. 
For $\beta \in \Delta_+$, we then have the vector field $\sigma_{e_\beta}$ on $\Zu$ given by
\begin{equation}\label{e-beta}
\sigma_{e_\beta}(p) = \frac{d}{dt}|_{t=0} ((\exp te_\beta) \cdot p), \hs p \in \Zu.
\end{equation}
On the other hand, the choice $\{e_\alpha: \alpha \in \Gamma\}$ gives rise to coordinates $(z_1, \ldots, z_n)$ on
the affine chart $\O^\gamma$ for each $\gamma \in \Upu$.
In this section, for every $\gamma \in \Upu$, we use the results in 
$\S$\ref{subsec-root-strings} to compute the vector fields $\sigma_{e_\beta}$, $\beta \in \Delta_+$,
in the coordinates $(z_1, \ldots, z_n)$
on $\mathcal{O}^\gamma$ in terms of root strings and structure constants of $\g$ in the basis  
$\{h_\alpha\}_{\alpha \in \Gamma} \cup \{e_{\alpha} \in \g_{\alpha}\}_{\alpha \in \Delta}$ of $\g$. 

For $x \in \b$ and $1 \leq k \leq n$, consider also the vector field $\sigma_x^{(k)}$ on the Bott-Samelson variety 
$Z_{(s_k, \ldots, s_n)}$ defined by
\begin{equation}\label{eq-sigma-k}
\sigma_x^{(k)}(p)=\frac{d}{dt}|_{t=0}  ((\exp t x) \cdot p), \hs p \in Z_{(s_k, \ldots, s_n)},
\end{equation}
where again $\cdot$ denotes the left action of $B$ on $Z_{(s_k, \ldots, s_n)}$ (see (\ref{eq-P-on-Zu})).
Note that $\sigma_x = \sigma_x^{(1)}$ for $x \in \b$.

Fix $\gamma = (\gamma_1, \ldots, \gamma_n)\in\Upu$ and let $(z_1, \ldots, z_n)$ be the coordinates on $\O^\gamma \subset \Zu$. 
For $1 \leq k \leq n$, we also regard $(z_k, \ldots, z_n)$ as coordinates on the affine chart
$\O^{(\gamma_k, \ldots, \gamma_n)}$ of $Z_{(s_k,\ldots, s_n)}$, so for $x \in \b$ and $k \leq j \leq n$, $\sigma_x^{(k)}(z_j)$
is the action of $\sigma_x^{(k)}$ on $z_j$ as a function on $\O^{(\gamma_k, \ldots, \gamma_n)}\subset Z_{(s_k,\ldots, s_n)}$.

The following Lemma \ref{recursive} gives a recursive formula for $\sigma_{e_\beta}$, regarded as a vector field on $\O^\gamma$.

\begin{lemma}
\label{recursive}
Let $\beta\in\Delta_+$ and $\gamma = (\gamma_1, \ldots, \gamma_n)\in \Upu$.

1) $\beta=\alpha_1$ and $\gamma_1=s_1$. In this case, $\sigma_{e_\beta}(z_1)=1$ and $\sigma_{e_\beta}(z_k)=0$ for all $k\geq 2$;

2) $\beta=\alpha_1$ and $\gamma_1=e$. In this case, $\sigma_{e_\beta}(z_1)=-z_1^2$ and for $k\geq 2$,
\[
\sigma_{e_\beta}(z_k)=\sigma_{e_\beta}^{(2)}(z_k)+z_1 \sigma_{h_{\alpha_1}}^{(2)}(z_k);
\]

3) $\beta\neq\alpha_1$. In this case, $\sigma_{e_\beta}(z_1)=0$ and for $k\geq 2$,
\[
\sigma_{e_\beta}(z_k)=\sum_{\stackrel{j\geq 0,}{ \gamma_1(\beta)-j\alpha_1\in\Delta_+}} \, c_{\alpha_1,\beta}^{\gamma_1,j}\,z_1^j\, \sigma_{e_{\gamma_1(\beta)-j\alpha_1}}^{(2)}(z_k).
\]
\end{lemma}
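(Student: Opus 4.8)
The plan is to derive all three statements from a single reduction. Realizing $\Zu$ as $P_{s_1}\times_B Z_{(s_2,\ldots,s_n)}$, the chart $\O^\gamma$ becomes $\Cset\times\O^{(\gamma_2,\ldots,\gamma_n)}$ via $(z_1,\ldots,z_n)\mapsto[p_1,p_2,\ldots,p_n]$ with $p_1=u_{-\gamma_1(\alpha_1)}(z_1)\dot{\gamma}_1=p_{\gamma_1,\alpha_1}(z_1)$. Since $\beta\in\Delta_+$ gives $e_\beta\in\b$, the vector field $\sigma_{e_\beta}$ generates the action $[p_1,p_2,\ldots,p_n]\mapsto[(\exp te_\beta)p_1,\;p_2,\ldots,p_n]$ by \eqref{eq-P-on-Zu}, so the whole task is to rewrite $(\exp te_\beta)p_1$ as $p_1'(t)\,b_1(t)$ with $p_1'(t)=u_{-\gamma_1(\alpha_1)}(z_1'(t))\dot{\gamma}_1$ again in coordinate form and $b_1(t)\in B$, $b_1(0)=e$. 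The $B$-equivalence defining $\Zu$ then gives $[p_1'(t)b_1(t),p_2,\ldots,p_n]=[p_1'(t),b_1(t)p_2,p_3,\ldots,p_n]$, so the first coordinate of the moved point is $z_1'(t)$, while for $k\geq2$ its $k$-th coordinate is the $z_k$-coordinate of $b_1(t)\cdot[p_2,\ldots,p_n]\in Z_{(s_2,\ldots,s_n)}$ (everything being defined for $t$ near $0$). Differentiating at $t=0$ and using \eqref{eq-sigma-k} yields
\[
\sigma_{e_\beta}(z_1)=\tfrac{d}{dt}\big|_{t=0}z_1'(t),\qquad \sigma_{e_\beta}(z_k)=\sigma_{x_1}^{(2)}(z_k)\ \ (k\geq2),\qquad x_1:=\tfrac{d}{dt}\big|_{t=0}b_1(t)\in\b,
\]
and, since $x\mapsto\sigma_x^{(2)}$ is linear, everything reduces to computing $z_1'(t)$ and $x_1$ in each of the three cases.

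For case (1) ($\beta=\alpha_1$, $\gamma_1=s_1$), $p_1=u_{\alpha_1}(z_1)\dot{s}_1$ and \eqref{eb1} gives $(\exp te_\beta)p_1=u_{\alpha_1}(t)u_{\alpha_1}(z_1)\dot{s}_1=u_{\alpha_1}(t+z_1)\dot{s}_1$, so $z_1'(t)=t+z_1$ and $b_1(t)=e$; hence $\sigma_{e_\beta}(z_1)=1$ and $\sigma_{e_\beta}(z_k)=0$. For case (2) ($\beta=\alpha_1$, $\gamma_1=e$), $p_1=u_{-\alpha_1}(z_1)$ and \eqref{eb2} gives, for $t$ near $0$,
\[
(\exp te_\beta)p_1=u_{\alpha_1}(t)u_{-\alpha_1}(z_1)=u_{-\alpha_1}\!\left(\tfrac{z_1}{1+tz_1}\right)u_{\alpha_1}\!\big(t(1+tz_1)\big)\,\alpha_1^\vee(1+tz_1),
\]
so $z_1'(t)=\tfrac{z_1}{1+tz_1}$, whence $\sigma_{e_\beta}(z_1)=-z_1^2$, and $b_1(t)=u_{\alpha_1}\!\big(t(1+tz_1)\big)\,\alpha_1^\vee(1+tz_1)$, whose two factors both equal $e$ at $t=0$. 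Since $\tfrac{d}{dt}\big|_{t=0}t(1+tz_1)=1$, $\tfrac{d}{dt}\big|_{t=0}(1+tz_1)=z_1$, and $\alpha_1^\vee$ has differential $h_{\alpha_1}$ at $1\in\Cset^\times$, one gets $x_1=e_{\alpha_1}+z_1h_{\alpha_1}$, and linearity of $x\mapsto\sigma_x^{(2)}$ gives $\sigma_{e_\beta}(z_k)=\sigma_{e_{\alpha_1}}^{(2)}(z_k)+z_1\sigma_{h_{\alpha_1}}^{(2)}(z_k)$.

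For case (3) ($\beta\neq\alpha_1$) I would write $(\exp te_\beta)p_1=p_1\exp\!\big(t\,\mathrm{Ad}_{p_1^{-1}}(e_\beta)\big)$ and compute, via Lemma \ref{useful-1},
\[
\mathrm{Ad}_{p_1^{-1}}(e_\beta)=\sum_{\stackrel{j\geq0,}{\gamma_1(\beta)-j\alpha_1\in\Delta}}c_{\alpha_1,\beta}^{\gamma_1,j}\,z_1^j\,e_{\gamma_1(\beta)-j\alpha_1}.
\]
The step that is not pure bookkeeping, and the one I expect to be the main obstacle, is to show that every root $\gamma_1(\beta)-j\alpha_1$ appearing here is \emph{positive}, so that $\mathrm{Ad}_{p_1^{-1}}(e_\beta)\in\n\subset\b$ and one may take $p_1'(t)=p_1$ (giving $\sigma_{e_\beta}(z_1)=0$) with $b_1(t)=\exp\!\big(t\,\mathrm{Ad}_{p_1^{-1}}(e_\beta)\big)\in B$ and $x_1=\mathrm{Ad}_{p_1^{-1}}(e_\beta)$. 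For the positivity: since $\alpha_1$ is simple and $\beta\in\Delta_+\setminus\{\alpha_1\}$, $\beta$ is not a multiple of $\alpha_1$, so every root of the form $\beta+m\alpha_1$ has a strictly positive coefficient on some simple root $\neq\alpha_1$ and hence lies in $\Delta_+$; this settles $\gamma_1=e$, and for $\gamma_1=s_1$ one notes $\gamma_1(\beta)-j\alpha_1=s_1(\beta+j\alpha_1)$ with $\beta+j\alpha_1\in\Delta_+\setminus\{\alpha_1\}$, which $s_1$ maps into $\Delta_+$. Expanding $\sigma_{x_1}^{(2)}(z_k)$ by linearity then produces the stated sum, and the same positivity lets us replace ``$\in\Delta$'' by ``$\in\Delta_+$'' in the summation condition. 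Everything else is a direct application of \eqref{eb1}, \eqref{eb2} and Lemma \ref{useful-1}, so I anticipate no further difficulty.
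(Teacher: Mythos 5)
Your proposal is correct and follows essentially the same route as the paper's (much terser) proof: cases 1) and 2) via \eqref{eb1} and \eqref{eb2}, and case 3) via Lemma \ref{useful-1} together with the observation that all roots in the $\alpha_1$-string through $\gamma_1(\beta)$ are positive, which is exactly the positivity argument you supply. The reduction to computing $z_1'(t)$ and $x_1=\frac{d}{dt}\big|_{t=0}b_1(t)$ is the correct way to make the paper's one-line justification precise.
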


\begin{proof}
Cases 1) and 2) follow from (\ref{eb1}) and (\ref{eb2}) respectively. Case 3) follows from Lemma \ref{useful-1}
and the fact that, as $\beta \in \Delta_+$ and $\beta \neq \alpha_1$, all the roots in the $\alpha_1$-string through $\gamma_1(\beta)$ are positive.
\end{proof}

To combine the cases in Lemma \ref{recursive}, we note that when $\beta = \alpha_1$, 
\[
\{j_1 \geq 0: \; \gamma_1(\beta)-j_1\alpha_1 \in \Delta_+\} = \begin{cases} \emptyset, & \;\; \mbox{if} \;\; \gamma_1 = s_1,\\
\{0\}, &\;\; \mbox{if} \;\; \gamma_1 = e.\end{cases}
\]
For $\alpha \in \Gamma$, also set
\begin{equation}\label{eq-c0}
c_{\alpha,\alpha}^{e,0}=1.
\end{equation}
We can now reformulate Lemma \ref{recursive} as follows.

\begin{lemma}\label{recursive-1}
Let $\beta\in\Delta_+$ and $\gamma = (\gamma_1, \ldots, \gamma_n)\in \Upu$. Then
\begin{equation}\label{eq-z1}
\sigma_{e_\beta}(z_1) = \begin{cases} 1, & \;\; \mbox{if} \;\; \beta = \alpha_1 \; \mbox{and} \; \gamma_1 = s_1,\\
-z_1^2, & \;\; \mbox{if} \;\; \beta = \alpha_1 \; \mbox{and} \; \gamma_1 = e,\\
0, &  \;\; \mbox{if} \;\; \beta \neq \alpha_1,\end{cases}
\end{equation}
and for $2 \leq k \leq n$,
\begin{align}
\label{repeat-0}
\sigma_{e_\beta}(z_k)=\!\!\!\sum_{\stackrel{j_1 \geq 0,}{\gamma_1(\beta)-j_1\alpha_1 \in \Delta_+}}
c_{\alpha_1,\beta}^{\gamma_1,j_1}\, z_1^{j_1}\,\sigma_{e_{\beta_{\gamma_1(\beta)-j_1\alpha_1}}}^{(2)}(z_k)+
\begin{cases}
z_1\sigma_{h_{\alpha_1}}^{(2)}(z_k), &\textrm{if}\beta=\alpha_1\textrm{ and }\gamma_1=e, \\
0, &\mbox{otherwise}.
\end{cases}
\end{align}
\end{lemma}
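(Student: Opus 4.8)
The plan is to derive Lemma \ref{recursive-1} as a direct consequence of Lemma \ref{recursive}, by checking that the three cases of the latter are uniformly subsumed by the combined formulas \eqref{eq-z1} and \eqref{repeat-0} once one uses the convention \eqref{eq-c0} together with the index-set identity recorded just before the statement. Since Lemma \ref{recursive} has already been established (from \eqref{eb1}, \eqref{eb2}, and Lemma \ref{useful-1}), no new geometric input is needed; the argument is a short piece of combinatorial bookkeeping, and I expect no genuine obstacle.

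First I would dispose of \eqref{eq-z1}: in the three mutually exclusive cases $(\beta = \alpha_1,\ \gamma_1 = s_1)$, $(\beta = \alpha_1,\ \gamma_1 = e)$, and $(\beta \neq \alpha_1)$, the value of $\sigma_{e_\beta}(z_1)$ is $1$, $-z_1^2$, and $0$ respectively, which is verbatim the content of parts 1), 2), 3) of Lemma \ref{recursive}.

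Next, for \eqref{repeat-0} with $2 \le k \le n$ I would run through the same three cases. If $\beta \neq \alpha_1$, then since $\beta \in \Delta_+$ all roots in the $\alpha_1$-string through $\gamma_1(\beta)$ are positive, so the summation index set coincides with the one appearing in part 3) of Lemma \ref{recursive}, the ``otherwise'' term vanishes, and \eqref{repeat-0} becomes exactly the formula of part 3). If $\beta = \alpha_1$ and $\gamma_1 = s_1$, then $\gamma_1(\beta) - j_1\alpha_1 = -(1+j_1)\alpha_1 \notin \Delta_+$ for every $j_1 \ge 0$, so the index set is empty, both the sum and the ``otherwise'' term are $0$, and the right-hand side equals $0$, matching part 1). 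If $\beta = \alpha_1$ and $\gamma_1 = e$, then $\gamma_1(\beta) - j_1\alpha_1 = (1 - j_1)\alpha_1 \in \Delta_+$ forces $j_1 = 0$; invoking the convention $c_{\alpha_1,\alpha_1}^{e,0} = 1$ of \eqref{eq-c0}, the sum collapses to $\sigma_{e_{\alpha_1}}^{(2)}(z_k)$, and adding the term $z_1 \sigma_{h_{\alpha_1}}^{(2)}(z_k)$ reproduces the formula of part 2). The only point requiring even a moment's care is the verification of the degenerate index sets $\{j_1 \ge 0 : \gamma_1(\beta) - j_1\alpha_1 \in \Delta_+\}$ when $\beta = \alpha_1$, together with the consistent bookkeeping of the convention \eqref{eq-c0}; everything else is immediate.
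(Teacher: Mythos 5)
Your proposal is correct and matches the paper's own treatment: the paper gives no separate proof of Lemma \ref{recursive-1}, but justifies it by exactly the same bookkeeping, namely the computation of the index set $\{j_1 \geq 0 : \gamma_1(\beta)-j_1\alpha_1 \in \Delta_+\}$ when $\beta=\alpha_1$ (empty if $\gamma_1=s_1$, equal to $\{0\}$ if $\gamma_1=e$) together with the convention $c_{\alpha_1,\alpha_1}^{e,0}=1$ from \eqref{eq-c0}, after which the three cases of Lemma \ref{recursive} collapse into \eqref{eq-z1} and \eqref{repeat-0}. Nothing is missing.
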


To obtain a closed formula for the vector field $\sigma_{e_\beta}$ on $\Zu$, we introduce more notation. 
Let $\mathbb{N}$ denote the set of non-negative integers.
\begin{notation}\label{nota-phi-psi}
{\em  For $\beta \in \Delta_+$ and $(j_1,\ldots, j_n)\in\mathbb{N}^n$, let $\beta_{(j_1)} = \gamma_1(\beta) - j_1 \alpha_1 \in \h^*$, and for $2 \leq
k \leq n$, let
\begin{align*}
&\beta_{(j_1,\ldots, j_k)} =\gamma_k (\beta_{(j_1, \ldots, j_{k-1})}) - j_k \alpha_k \\
& \hs \hs \;\;\;\;= \gamma_k\gamma_{k-1}\cdots \gamma_2\gamma_1(\beta)-j_1\gamma_k\gamma_{k-1}\cdots\gamma_2(\alpha_1)-\ldots -j_{k-1}\gamma_k(\alpha_{k-1})-j_k\alpha_k\in\mathfrak{h}^*,\\
&J_{k} =\left\{(j_1,\ldots,j_{k-1})\in {\mathbb{N}}^{k-1}: \beta_{(j_1,\ldots, j_l)} \in\Delta_+, \,\forall\,
1\leq l \leq k-1, \; \mbox{and} \;\, \beta_{(j_1,\ldots, j_{k-1})} =\alpha_{k}\right\}.
\end{align*}
For $2 \leq k \leq n$ and for $(j_1,\ldots, j_{k-1})\in J_k$, let
\begin{align}
\label{const}
c_{j_1,\ldots, j_{k-1}}^\gamma=c_{\alpha_1,\beta}^{\gamma_1,j_1}\cdots
c_{\alpha_{k-1},\beta_{(j_1,\ldots, j_{k-2})}}^{\gamma_{k-1},j_{k-1}} \neq 0.
\end{align}
Here it is understood that $\beta_{(j_1,\ldots, j_{k-2})} = \beta$ if $k = 2$. Also note that for $k \geq 2$ and 
$1 \leq i \leq k-1$, $c_{\alpha_{i},\beta_{(j_1,\ldots, j_{i-1})}}^{\gamma_{i},j_{i}}$ is defined in 
(\ref{constants-1}) and (\ref{constants-2}) when $\beta_{(j_1,\ldots, j_{i-1})} \neq \alpha_i$, and if 
$\beta_{(j_1,\ldots, j_{i-1})} = \alpha_i$, then $\gamma_{i} (\beta_{(j_1,\ldots, j_{i-1})}) -j_i \alpha_i \in \Delta_+$ only 
if $\gamma_i = e$ and $j_i = 0$, and in this case  $c_{\alpha_{i},\beta_{(j_1,\ldots, j_{i-1})}}^{\gamma_{i},j_{i}}
=1$ as defined in (\ref{eq-c0}).

For each $1\leq k \leq n$, introduce two functions $\phi_\beta^\gamma(z_1,\ldots, z_{k-1})$ and $\psi_\beta^\gamma(z_1,\ldots, z_{k-1})$ as follows: for $k=1$, let
\begin{equation}\label{eq-phi-psi-1}
\phi_\beta^\gamma(z_1,\ldots, z_{k-1})=
\begin{cases}
1 &\textrm{ if }\beta=\alpha_1, \\
0 &\textrm{ if }\beta\neq\alpha_1,
\end{cases}
\quad
\textrm{ and }
\quad
\psi_\beta^\gamma(z_1,\ldots, z_{k-1})=0,
\end{equation}
and for $2\leq k \leq n$, let
\begin{align}
\label{phibeta}
\phi_\beta^\gamma(z_1,\ldots, z_{k-1})&=\sum_{(j_1,\ldots, j_{k-1})\in J_k}c_{j_1,\ldots, j_{k-1}}^\gamma
\, z_1^{j_1}z_2^{j_2}\cdots z_{k-1}^{j_{k-1}}, \\
\label{psibeta}
\psi_\beta^\gamma(z_1,\ldots, z_{k-1})&=-\sum_{1\leq i \leq k-1, \, \gamma_i = e}\frac{2\langle \gamma^i(\alpha_i),\gamma^k(\alpha_k) \rangle}{\langle \gamma^i(\alpha_i),\gamma^i(\alpha_i) \rangle}z_i\phi_\beta^\gamma(z_1,\ldots z_{i-1}),
\end{align}
where recall that $\gamma^i=\gamma_1\gamma_2\cdots\gamma_i$ for $1\leq i \leq n$, and the function
$\phi^\gamma_\beta(z_1, \ldots, z_{k-1})$ (resp. $\psi^\gamma_\beta(z_1, \ldots, z_{k-1})$) is defined to be $0$ 
if the index set for the summation on the right hand side of (\ref{phibeta}) (resp. (\ref{psibeta})) is empty.
}
\end{notation}

\begin{remark}\label{power}
{\em Since a root string can have length at most $4$, it follows
from \eqref{phibeta} and \eqref{psibeta} that the powers of any coordinate $z_i$ in the polynomials
$\phi_\beta^\gamma(z_1,\ldots, z_{k-1})$ and $\psi_\beta^\gamma(z_1,\ldots, z_{k-1})$ can be at most $3$ (and $1$ when
$\g$ is simply-laced).
\hfill $\diamond$
}
\end{remark}

The following Theorem \ref{epsilonbeta} gives a purely combinatorial formula for the vector field $\sigma_{e_\beta}$.

\begin{theorem}
\label{epsilonbeta}
Let $\beta \in \Delta_+$ and  let $\gamma = (\gamma_1, \ldots, \gamma_n)\in \Upu$. The vector field $\sigma_{e_\beta}$ acts on the coordinate
functions $(z_1, \ldots, z_n)$ on the affine chart $\mathcal{O}^\gamma$ as follows: for $1 \leq k \leq n$,
\begin{equation}\label{V-e-beta}
\sigma_{e_\beta}(z_k)=
\begin{cases}
\phi_\beta^\gamma(z_1,\ldots, z_{k-1})+\psi_\beta^\gamma(z_1,\ldots,z_{k-1})z_k, &\textrm{ if }\;\gamma_k=s_k, \\
-\phi_\beta^\gamma(z_1,\ldots, z_{k-1})z_k^2+\psi_\beta^\gamma(z_1,\ldots, z_{k-1})z_k, &\textrm{ if }\;\gamma_k=e.
\end{cases}
\end{equation}
\end{theorem}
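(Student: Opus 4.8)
The plan is to prove Theorem \ref{epsilonbeta} by induction on $n$, the length of $\bfu$, using the recursion of Lemma \ref{recursive-1} to reduce the vector field $\sigma_{e_\beta}$ on $\Zu$ to vector fields on the sub-Bott-Samelson variety $Z_{(s_2, \ldots, s_n)}$, for which the statement is already available. The base case $n=1$ — equivalently, the case $k=1$ for arbitrary $n$ — is immediate from Lemma \ref{recursive-1}, eq.\ \eqref{eq-z1}: in each of the three cases ($\beta=\alpha_1$ with $\gamma_1=s_1$; $\beta=\alpha_1$ with $\gamma_1=e$; $\beta\neq\alpha_1$) the value of $\sigma_{e_\beta}(z_1)$ agrees with \eqref{V-e-beta}, once one recalls from \eqref{eq-phi-psi-1} that $\psi_\beta^\gamma$ with no arguments vanishes while $\phi_\beta^\gamma$ with no arguments equals $1$ if $\beta=\alpha_1$ and $0$ otherwise.

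For the inductive step I would fix $k$ with $2\leq k\leq n$, set $\gamma'=(\gamma_2,\ldots,\gamma_n)$, and substitute into the recursion \eqref{repeat-0} the values of $\sigma_{e_{\gamma_1(\beta)-j_1\alpha_1}}^{(2)}(z_k)$ and (when $\beta=\alpha_1$, $\gamma_1=e$) of $\sigma_{h_{\alpha_1}}^{(2)}(z_k)$. For the first, the sum in \eqref{repeat-0} runs only over $j_1$ with $\gamma_1(\beta)-j_1\alpha_1\in\Delta_+$, so the induction hypothesis applied to $Z_{(s_2,\ldots,s_n)}$ rewrites each $\sigma_{e_{\gamma_1(\beta)-j_1\alpha_1}}^{(2)}(z_k)$ according to \eqref{V-e-beta}, with the $\phi$- and $\psi$-functions built from $\gamma'$, the root $\gamma_1(\beta)-j_1\alpha_1$, and the variables $z_2,\ldots,z_{k-1}$. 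For the second, $h_{\alpha_1}$ lies in the Lie algebra $\h$ of $T$, so $\sigma_{h_{\alpha_1}}^{(2)}$ generates the corresponding torus action and \eqref{hvectorfield} gives $\sigma_{h_{\alpha_1}}^{(2)}(z_k)=-\frac{2\la\gamma^k(\alpha_k),\,\gamma^1(\alpha_1)\ra}{\la\gamma^1(\alpha_1),\gamma^1(\alpha_1)\ra}\,z_k$; multiplied by $z_1$ this contributes, as coefficient of $z_k$, precisely the $i=1$ summand of $\psi_\beta^\gamma$ in \eqref{psibeta} (which is nonzero only when $\beta=\alpha_1$ and $\gamma_1=e$).

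Collecting terms, the coefficient of $1$ (when $\gamma_k=s_k$), respectively of $-z_k^2$ (when $\gamma_k=e$), becomes $\sum_{j_1}c_{\alpha_1,\beta}^{\gamma_1,j_1}z_1^{j_1}\,\phi_{\gamma_1(\beta)-j_1\alpha_1}^{\gamma'}(z_2,\ldots,z_{k-1})$, and the coefficient of $z_k$ becomes the same expression with $\phi$ replaced by $\psi$, plus the contribution just described. The combinatorial core of the argument is the identity
\[
\sum_{\stackrel{j_1\geq 0,}{\gamma_1(\beta)-j_1\alpha_1\in\Delta_+}} c_{\alpha_1,\beta}^{\gamma_1,j_1}\,z_1^{j_1}\,\phi_{\gamma_1(\beta)-j_1\alpha_1}^{\gamma'}(z_2,\ldots,z_{m-1}) \;=\; \phi_\beta^\gamma(z_1,\ldots,z_{m-1}),\hs 2\leq m\leq k,
\]
which I would prove by unfolding both sides (the base of the unfolding being \eqref{eq-phi-psi-1}): writing $\delta=\beta_{(j_1)}=\gamma_1(\beta)-j_1\alpha_1$, one has $\delta_{(j_2,\ldots,j_l)}=\beta_{(j_1,j_2,\ldots,j_l)}$ for all $l$, so the disjoint union over admissible $j_1$ of $\{j_1\}$ times the index set defining $\phi_\delta^{\gamma'}$ is exactly the set $J_m$ of Notation \ref{nota-phi-psi}, and the coefficients multiply correctly, $c_{\alpha_1,\beta}^{\gamma_1,j_1}\cdot c_{j_2,\ldots,j_{m-1}}^{\gamma',\beta_{(j_1)}}=c_{j_1,\ldots,j_{m-1}}^\gamma$ (here $c_{j_2,\ldots,j_{m-1}}^{\gamma',\beta_{(j_1)}}$ is the coefficient \eqref{const} formed for $Z_{(s_2,\ldots,s_n)}$, $\gamma'$, and the root $\beta_{(j_1)}$), by \eqref{const}. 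Using this with $m=k$ turns the constant/$z_k^2$-coefficient into $\phi_\beta^\gamma(z_1,\ldots,z_{k-1})$; using it for $2\leq m\leq k-1$, together with the Weyl-invariance of $\lara$ — which matches $\frac{2\la\gamma^i(\alpha_i),\gamma^k(\alpha_k)\ra}{\la\gamma^i(\alpha_i),\gamma^i(\alpha_i)\ra}$ with the corresponding coefficient for $\gamma'$ even when $\gamma_1=s_1$ — turns the $\psi$-coefficient, after adding the $\sigma_{h_{\alpha_1}}^{(2)}$ contribution as the $i=1$ term, into $\psi_\beta^\gamma(z_1,\ldots,z_{k-1})$. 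This is exactly \eqref{V-e-beta}.

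The step I expect to be the main obstacle is the bookkeeping for the $\psi$-part: one must verify that the single torus-direction term $z_1\sigma_{h_{\alpha_1}}^{(2)}(z_k)$ in \eqref{repeat-0} supplies precisely the missing $i=1$ summand of $\psi_\beta^\gamma$, and that the inductively produced $\psi^{\gamma'}$-terms, after the composition identity above and the Weyl-invariance reduction, carry exactly the coefficients $\frac{2\la\gamma^i(\alpha_i),\gamma^k(\alpha_k)\ra}{\la\gamma^i(\alpha_i),\gamma^i(\alpha_i)\ra}$ demanded by \eqref{psibeta} for the full $\gamma$. By contrast, the $\phi$-part is a clean application of the composition-of-index-sets identity, and the $k=1$ case is a direct reading of \eqref{eq-z1}.
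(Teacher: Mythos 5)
Your proof is correct and follows essentially the same route as the paper: both rest on the recursion of Lemma \ref{recursive-1}, the multiplicative structure of the coefficients $c^\gamma_{j_1,\ldots,j_{k-1}}$ in \eqref{const}, and the torus-weight computation of $\sigma^{(i+1)}_{h_{\alpha_i}}(z_k)$ that produces the summands of $\psi^\gamma_\beta$. The only difference is organizational — you peel off one factor and invoke an induction hypothesis on $Z_{(s_2,\ldots,s_n)}$ together with an explicit composition identity for $\phi$, whereas the paper unrolls the recursion all the way to level $k$ over the set $J_k'$ and then discards the terms with $\beta_{(j_1,\ldots,j_{k-1})}\neq\alpha_k$ — and both versions of the bookkeeping check out.
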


\begin{proof}
When $k=1$, Theorem \ref{epsilonbeta} holds by (\ref{eq-phi-psi-1}) and by Lemma \ref{recursive-1}. 
Let $k \geq 2$. Let
\[
J_k^\prime=\left\{(j_1,\ldots,j_{k-1})\in\mathbb{N}^{k-1}: \beta_{(j_1,\ldots, j_l)} \in\Delta_+, \,\forall\,
1\leq l \leq k-1\right\},
\]
and define $c_{j_1,\ldots, j_{k-1}}^\gamma \in \Cset^\times$ for $(j_1, \ldots, j_{k-1}) \in J_k^\prime$ as in
(\ref{const}). 
Then by Lemma \ref{recursive-1}, 
\begin{align}
\label{repeat}
\sigma_{e_\beta}(z_k)=\sum_{j_1\in J_2^\prime}c_{\alpha_1,\beta}^{\gamma_1,j_1}z_1^{j_1}\sigma_{e_{\beta_{(j_1)}}}^{(2)}(z_k)+
\begin{cases}
z_1\sigma_{h_{\alpha_1}}^{(2)}(z_k), &\textrm{ if }\beta=\alpha_1\textrm{ and }\gamma_1=e, \\
0, &\textrm{ otherwise.}
\end{cases}
\end{align}
By repeatedly using (\ref{repeat}), one has
\begin{align*}
\sigma_{e_\beta}(z_k) &=\sum_{(j_1,\ldots, j_{k-1})\in J_k^\prime}c_{j_1,\ldots, j_{k-1}}^\gamma z_1^{j_1}\cdots z_{k-1}^{j_{k-1}}\sigma_{e_{\beta_{(j_1,\ldots, j_{k-1})}}}^{(k)}(z_k)\\
& \hs +\sum_{1\leq i \leq k-1, \, \gamma_i = e}\phi_\beta^\gamma(z_1,\ldots, z_{i-1})z_i
\sigma_{h_{\alpha_i}}^{(i+1)}(z_k).
\end{align*}
Let $z_k^\prime=1$ if $\gamma_k=s_k$ and $z_k^\prime=-z_k^2$ if $\gamma_k=e$. By Lemma \ref{recursive}, for $(j_1,\ldots, j_{k-1})\in J_k^\prime$, one has $\sigma_{e_{\beta_{(j_1,\ldots, j_{k-1})}}}^{(k)}(z_k)=0$ unless $\beta_{(j_1,\ldots, j_{k-1})}=\alpha_k$, in which case $\sigma_{e_{\beta_{(j_1,\ldots, j_{k-1})}}}^{(k)}(z_k)=z_k^\prime$. Thus
\begin{align*}
\sigma_{e_\beta}(z_k)&=\sum_{(j_1,\ldots, j_{k-1})\in J_k}c_{j_1,\ldots, j_{k-1}}^\gamma z_1^{j_1}\cdots z_{k-1}^{j_{k-1}}z_k^\prime+\sum_{1\leq i \leq k-1, \, \gamma_i=e}\phi_\beta^\gamma(z_1,\ldots, z_{i-1})z_i\sigma_{h_{\alpha_i}}^{(i+1)}(z_k) \\
&=\phi_\beta^\gamma(z_1,\ldots, z_{k-1})z_k^\prime + \sum_{1\leq i \leq k-1, \,\gamma_i=e}\phi_\beta^\gamma(z_1,\ldots, z_{i-1})z_i\sigma_{h_{\alpha_i}}^{(i+1)}(z_k)
\end{align*}
On the other hand, for each $1\leq i \leq k-1$ with $\gamma_i=e$,
\[
\sigma_{h_{\alpha_i}}^{(i+1)}(z_k)=-\frac{2\langle \alpha_i,\gamma_{i+1}\cdots \gamma_k(\alpha_k) \rangle}{\langle \alpha_i,\alpha_i \rangle}z_k=-\frac{2\langle \gamma^i(\alpha_i),\gamma^k(\alpha_k) \rangle}{\langle \gamma^i(\alpha_i),\gamma^i(\alpha_i) \rangle}z_k.
\]
It follows that
\[
\sigma_{e_\beta}(z_k)=\phi_\beta^\gamma(z_1,\ldots, z_{k-1})z_k^\prime+\psi_\beta^\gamma(z_1,\ldots, z_{k-1})z_k.
\]
\end{proof}

\begin{remark}\label{re-V}
{\em In the context of Theorem \ref{epsilonbeta}, for a given $\gamma = (\gamma_1, \ldots, \gamma_n) \in \Upu$ and
$1 \leq k \leq n$, let
$\gamma^\prime = (\gamma_1, \ldots, \gamma_{k-1}, \gamma_k s_k, \gamma_{k+1}^\prime, \ldots, \gamma_n^\prime) \in \Upu$,
where $\gamma_j^\prime \in \{e, s_j\}$ are arbitrary for $k+1 \leq j \leq n$, and let $(z_1^\prime, \ldots, z_n^\prime)$
be the coordinates on $\O^{\gamma^\prime}$. Then $z_j = z_j^\prime$ for $1 \leq j \leq k-1$, and $z_k^\prime = 1/z_k$.
By (\ref{phibeta}) and (\ref{psibeta}), 
\[
\phi^\gamma_{\beta}(z_1, \ldots, z_{k-1}) = \phi^{\gamma^\prime}_\beta(z_1, \ldots, z_{k-1}) \hs\mbox{and} \hs
\psi^\gamma_{\beta}(z_1, \ldots, z_{k-1}) = -\psi^{\gamma^\prime}_\beta(z_1, \ldots, z_{k-1}).
\]
One can thus derive one case of the formula (\ref{V-e-beta}) from the other case using the change of coordinates
$z_k^\prime = 1/z_k$.
\hfill $\diamond$}
\end{remark}

\begin{example}\label{ex-e-beta}
{\em
Let $\beta$ be a simple root and let $\gamma = (e, e, \ldots, e) \in \Upu$. 
Then in the affine chart $\mathcal{O}^{(e,e,\ldots,e)}$ with coordinates $(z_1, \ldots, z_n)$ given in \eqref{eq-Phi-gamma},
the vector field $\sigma_{e_\beta}$  is given  by
\begin{equation}\label{eq-eee}
\sigma_{e_\beta}(z_k)=-\frac{2\langle \beta,\alpha_k \rangle}{\langle \beta,\beta \rangle} \left(\sum_{1\leq i\leq k-1,\,\alpha_i=\beta}z_{i} \right)
z_k+
\begin{cases}
0, &\textrm{if }\alpha_k\neq\beta, \\
-z_k^2, &\textrm{if }\alpha_k=\beta,
\end{cases}\qquad 1 \leq k \leq n.
\end{equation}
Indeed, let $1 \leq k \leq n$. By Theorem \ref{epsilonbeta}, one has,
\[
\sigma_{e_{\beta}}(z_k)=
-\phi_{\beta}^\gamma(z_1,\ldots,z_{k-1})z_k^2+\psi_{\beta}^\gamma(z_1,\ldots,z_{k-1})z_k.
\]
As $\beta$ is a simple root, one sees from the definition of $\phi_\beta^\gamma$ 
that $\phi_\beta^\gamma(z_1, \ldots, z_{k-1}) = 1$ if $\alpha_k = \beta$ and $\phi_\beta^\gamma(z_1, \ldots, z_{k-1}) = 0$ if
$\alpha_k \neq \beta$. It follows from the definition of $\psi_\beta^\gamma$ that 
\[
\psi_\beta^\gamma(z_1, \ldots, z_{k-1}) = -\frac{2\langle \beta,\alpha_k \rangle}{\langle \beta,\beta \rangle} \left(\sum_{1\leq i\leq k-1,\,\alpha_i=\beta}z_{i} \right).
\]
This proves \eqref{eq-eee}.
Applying Lemma \ref{le-zizj-1} and \eqref{eq-eee}, one sees that in the affine chart $\mathcal{O}^{(s_1, e, \ldots, e)}$, the Poisson structure $\Pi$ is given by
\begin{align*}
\{z_i,z_k\}&= \langle\alpha_i,\alpha_k\rangle z_iz_k, \;\;\;\;\;\textrm{if}\,\; 2\leq i <k \leq n, \\
\{z_1,z_k\}
&=\begin{cases}
-\langle \alpha_1,\alpha_k \rangle \left( z_1 -2\sum_{2\leq i\leq k-1,\,\alpha_i=\alpha_1}z_{i}\right)z_k,&\textrm{if } 2 \leq k \leq n \; \mbox{and} \; \alpha_k\neq\alpha_1,  \\
-\langle \alpha_1,\alpha_1 \rangle \left( z_1 -z_k -2\sum_{2 \leq i \leq k-1,\, \alpha_i=\alpha_1}z_{i}\right)z_k, &\textrm{if }
2 \leq k \leq n \; \mbox{and} \;\alpha_k=\alpha_1.
\end{cases}
\end{align*}
On the other hand, by Lemma \ref{le-zizj-eee}, in the coordinates $(\xi_1, \ldots, \xi_n)$ on 
$\mathcal{O}^{(e,e,\ldots ,e)}$ given by
\[
(\xi_1,\xi_2,\ldots ,\xi_n) \longmapsto [u_{-\alpha_1}(\xi_1),\, u_{-\alpha_2}(\xi_2),\,\ldots, \,u_{-\alpha_n}(\xi_n)],
\] 
the Poisson structure $\pi_n$ is given by 
$\{\xi_i, \xi_k\} = \langle \alpha_i, \alpha_k \rangle \xi_i\xi_k$ for all $1 \leq i < k \leq n$. 
It is easy to see that on the intersection $\mathcal{O}^{(e,e,\ldots ,e)} \cap \mathcal{O}^{(s_1,e,\ldots ,e)}$,
the changes between the 
coordinates $(\xi_1, \xi_2, \ldots, \xi_n)$ on $\O^{(e, e, \ldots, e)}$ and the coordinates $(z_1, z_2, \ldots, z_n)$
on $\O^{(s_1, e, \ldots, e)}$ are given by  $z_1 = 1/\xi_1$, and for $2 \leq k \leq n$, 
\[
z_k=\begin{cases}
\xi_k\left( \sum_{\stackrel{\alpha_i=\alpha_1}{1\leq i\leq k-1}}\xi_i \right)^{\frac{-2\langle \alpha_1,\alpha_k \rangle}{\langle \alpha_1,\alpha_1 \rangle}} &\mbox{if}\;\; \alpha_k\neq \alpha_1, \\
\frac{\xi_k}{\left( \sum_{\stackrel{\alpha_i=\alpha_1}{1 \leq i\leq k-1}}\xi_i \right)\left( \sum_{\stackrel{\alpha_i=\alpha_1}{1\leq i\leq k}}\xi_i \right)} & \mbox{if} \;\; \alpha_k=\alpha_1.\end{cases}
\]
It is remarkable (see \cite{Balazs:thesis} for some  details of the calculations) that these 
changes of coordinates  indeed change the quadratic Poisson structure expressed in the coordinates $(z_1, \ldots, z_n)$ to
the log-canonical one in the coordinates $(\xi_1, \ldots, \xi_n)$. 
\hfill $\diamond$
}
\end{example}

\subsection{The Poisson structure $\pi_n$ in coordinates, II}\label{subsec-coor-II}
Let again $\{e_\alpha \in \g_\alpha: \alpha \in \Gamma\}$ be a set of root vectors for the simple roots, which gives
rise to the coordinates $(z_1, \ldots, z_n)$ on each affine chart $\O^\gamma$ via \eqref{eq-Phi-gamma}.
Recall from
Lemma \ref{le-zizj-1} that the Poisson structure $\pi_n$ can be expressed
in the coordinates $(z_1, \ldots, z_n)$ on $\O^\gamma$ in terms of the vector fields $\sigma_i$, $1 \leq i \leq n-1$
on the Bott-Samelson variety $Z_{(s_{i+1}, \ldots, s_n)}$, given in 
(\ref{eq-Vi}). We now apply Theorem \ref{epsilonbeta} to the vector fields $\sigma_i$.

To this end, extend the set $\{e_\alpha \in \g_\alpha: \alpha \in \Gamma\}$
to a basis $\{h_\alpha\}_{\alpha \in \Gamma} \cup \{e_{\alpha} \in \g_{\alpha}\}_{\alpha \in \Delta}$ of $\g$
such that $[e_\alpha, e_{-\alpha}] = h_\alpha$ for all $\alpha \in \Delta$. Fix $\gamma =(\gamma_1, \ldots, \gamma_n) \in
\Upu$. For $1 \leq i < k \leq n$,
define two polynomials in the variables $(z_{i+1}, \ldots, z_{k-1})$ by
\begin{align}\label{eq-phi-ik}
\phi_{i,k}^\gamma(z_{i+1}, \ldots, z_{k-1}) & \,\stackrel{{\rm def}}{=}\, 
\phi_{\alpha_i}^{(\gamma_{i+1}, \ldots, \gamma_n)}(z_{i+1}, \ldots, z_{k-1}), \\
\label{eq-psi-ik}
\psi_{i,k}^\gamma(z_{i+1}, \ldots, z_{k-1}) & \,\stackrel{{\rm def}}{\,=\,}\,  
\psi_{\alpha_i}^{(\gamma_{i+1}, \ldots, \gamma_n)}(z_{i+1}, \ldots, z_{k-1})
\end{align}
by taking $\beta = \alpha_i$ and replacing $\bfu$ by $(s_{i+1}, \ldots, s_n)$  and
$\gamma$ by $(\gamma_{i+1}, \ldots, \gamma_n)$ in (\ref{phibeta}) and (\ref{psibeta}).
Here recall that when $k = i+1$, it is understood that $\Cset[z_{i+1}, \ldots, z_{k-1}] = \Cset$.
Let $1 \leq i \leq n-1$. By Theorem \ref{epsilonbeta}, the vector field $\sigma_i$ is given in the coordinates
$(z_{i+1}, \ldots, z_n)$ on the affine chart $\O^{(\gamma_{i+1}, \ldots, \gamma_n)}$ of $Z_{(s_{i+1}, \ldots, s_n)}$ by
\begin{equation}\label{eq-Vi-zk}
\sigma_i(z_k) =\begin{cases}
\phi_{i,k}^\gamma(z_{i+1},\ldots, z_{k-1})+\psi_{i,k}^\gamma(z_{i+1},\ldots,z_{k-1})z_k, &\textrm{ if }\;\gamma_k=s_k, \\
-\phi_{i,k}^\gamma(z_{i+1},\ldots, z_{k-1})z_k^2+\psi_{i,k}^\gamma(z_{i+1},\ldots, z_{k-1})z_k, &\textrm{ if }\;\gamma_k=e,
\end{cases} \hs i < k \leq n.
\end{equation}

\begin{lemma}\label{le-indep} The polynomials $\phi_{i,k}^\gamma(z_{i+1}, \ldots, z_{k-1})$
and $\psi_{i,k}^\gamma(z_{i+1}, \ldots, z_{k-1})$, where $\gamma \in \Upu$ and $1 \leq i < k \leq n$,
are independent of the extension of $\{e_\alpha: \alpha \in \Gamma\}$ to the 
basis $\{h_\alpha\}_{\alpha \in \Gamma} \cup \{e_{\alpha} \in \g_{\alpha}\}_{\alpha \in \Delta}$ of $\g$.
\end{lemma}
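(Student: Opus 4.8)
The plan is to reduce the statement to the intrinsic nature of the vector fields $\sigma_i$, thereby avoiding any direct manipulation of the constants $c_{\alpha,\beta}^{\kappa,j}$, which individually do depend on the chosen extension.

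First I would observe that for each $1 \leq i \leq n-1$ the vector field $\sigma_i$ on $Z_{(s_{i+1},\ldots, s_n)}$ defined in \eqref{eq-Vi} depends only on the simple root vector $e_{\alpha_i}$, through $\exp(te_{\alpha_i}) = u_{\alpha_i}(t) \in B$, and on the left action \eqref{eq-P-on-Zu} of $B$; it does not involve the root vectors $e_{\pm\beta}$ for non-simple $\beta$. Likewise, for any $\gamma = (\gamma_1,\ldots,\gamma_n) \in \Upu$, the parametrization $\Phi^{(\gamma_{i+1},\ldots,\gamma_n)}$ of the affine chart $\O^{(\gamma_{i+1},\ldots,\gamma_n)}$ of $Z_{(s_{i+1},\ldots,s_n)}$, see \eqref{eq-Phi-gamma}, is built only from the one-parameter subgroups $u_{\pm\alpha_j}$ attached to the simple roots $\alpha_j$ and from the representatives $\dot{s}_{\alpha_j}$, so the coordinate functions $(z_{i+1},\ldots,z_n)$ are determined by $\{e_\alpha : \alpha \in \Gamma\}$ alone. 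Hence the function $\sigma_i(z_k)$ --- the action of the fixed vector field $\sigma_i$ on the fixed regular function $z_k$ --- is an element of $\Cset[z_{i+1},\ldots,z_n]$ that does not depend on the extension of $\{e_\alpha : \alpha \in \Gamma\}$ to the basis $\{h_\alpha\}_{\alpha\in\Gamma}\cup\{e_\alpha\in\g_\alpha\}_{\alpha\in\Delta}$.

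It then remains to recover $\phi_{i,k}^\gamma$ and $\psi_{i,k}^\gamma$ from $\sigma_i(z_k)$ in a way that makes their independence manifest. By \eqref{eq-Vi-zk}, for $i < k \leq n$ the polynomial $\sigma_i(z_k)$ lies in $\Cset[z_{i+1},\ldots,z_k]$ and equals $\phi_{i,k}^\gamma(z_{i+1},\ldots,z_{k-1}) + \psi_{i,k}^\gamma(z_{i+1},\ldots,z_{k-1})\,z_k$ when $\gamma_k = s_k$, and equals $-\phi_{i,k}^\gamma(z_{i+1},\ldots,z_{k-1})\,z_k^2 + \psi_{i,k}^\gamma(z_{i+1},\ldots,z_{k-1})\,z_k$ when $\gamma_k = e$, with $\phi_{i,k}^\gamma$ and $\psi_{i,k}^\gamma$ in $\Cset[z_{i+1},\ldots,z_{k-1}]$. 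Viewing $\sigma_i(z_k)$ as a polynomial in $z_k$ over the coefficient ring $\Cset[z_{i+1},\ldots,z_{k-1}]$, the monomials $1, z_k$ (respectively $z_k, z_k^2$) are linearly independent over this ring, so in either case $\phi_{i,k}^\gamma$ and $\psi_{i,k}^\gamma$ are uniquely pinned down as the coefficients of the relevant powers of $z_k$ in $\sigma_i(z_k)$. Since $\sigma_i(z_k)$ is independent of the extension, so are $\phi_{i,k}^\gamma$ and $\psi_{i,k}^\gamma$, for every $\gamma \in \Upu$ and all $1 \leq i < k \leq n$.

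The argument is essentially soft, so there is no deep obstacle; the one point to treat carefully is the claim that $\sigma_i(z_k)$ actually lies in the subring $\Cset[z_{i+1},\ldots,z_k]$ with the stated affine or quadratic shape in $z_k$ --- this is exactly the content of \eqref{eq-Vi-zk}, itself a specialization of Theorem \ref{epsilonbeta}, and should be invoked rather than re-derived. I would also remark in passing that a direct route --- proving from Notation \ref{nota-phi-psi} that the products $c_{j_1,\ldots,j_{k-1}}^\gamma$ entering \eqref{phibeta} and \eqref{psibeta} telescope into expressions free of the structure constants $N_{\alpha,\beta}$ for non-simple $\beta$ --- is possible but considerably more cumbersome, which is why I would organize the proof around the intrinsic characterization above.
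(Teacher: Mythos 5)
Your proposal is correct and follows essentially the same route as the paper, which simply observes that the coordinates and the vector fields $\sigma_i$ depend only on $\{e_\alpha:\alpha\in\Gamma\}$ and not on its extension; you additionally make explicit the (implicit in the paper) final step that $\phi_{i,k}^\gamma$ and $\psi_{i,k}^\gamma$ are uniquely recovered from $\sigma_i(z_k)$ as coefficients of powers of $z_k$ via \eqref{eq-Vi-zk}.
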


\begin{proof}
The coordinates $(z_1, \ldots, z_n)$ on $\O^\gamma$ and
the definition of the vector fields $\sigma_i$, $1 \leq i \leq n-1$,
on $Z_{(s_{i+1}, \ldots, s_n)}$ depend only on the choice of $\{e_\alpha: \alpha \in \Gamma\}$ and not
on its extension to the basis $\{h_\alpha\}_{\alpha \in \Gamma} \cup \{e_{\alpha} \in \g_{\alpha}\}_{\alpha \in \Delta}$ of $\g$. 
\end{proof}

The following Theorem \ref{th-zizj}, which expresses more explicitly the formula for the Poisson structure $\pi_n$ on
$\Zu$ in the affine coordinates given in Lemma \ref{le-zizj-1}, is a combination of Lemma \ref{le-zizj-1} and
Theorem \ref{epsilonbeta}.

\begin{theorem}\label{th-zizj} Let $\{e_\alpha: \alpha \in \Gamma\}$ be any choice of a set of 
root vectors for the simple roots and 
let $\gamma \in \Upu$. Then in the coordinates 
$(z_1, \ldots, z_n)$ on the affine chart $\O^\gamma$ of $\Zu$ determined by 
$\{e_\alpha: \alpha \in \Gamma\}$,
the Poisson structure $\pi_n$ is given  by
\begin{equation}\label{eq-zizj-th}
\{z_i , z_k \} = \begin{cases}
\langle \gamma^i(\alpha_i), \, \gamma^k(\alpha_k) \rangle z_iz_k,  &\mbox{if }\gamma_i=e \\
-\langle \gamma^i(\alpha_i), \, \gamma^k(\alpha_k) \rangle z_iz_k -\langle \alpha_i,
\alpha_i \rangle \sigma_i(z_k) &\mbox{if }\gamma_i=s_i
\end{cases},
\quad 1\leq i < k \leq n,
\end{equation}
where for $1 \leq i < k \leq n$, $\sigma_i(z_k) \in \Cset[z_{i+1}, \ldots, z_k]$ is given in (\ref{eq-Vi-zk}). In particular,
when $\gamma = \bfu$ is the full subexpression, $\sigma_i(z_k) \in \Cset[z_{i+1}, \ldots, z_{k-1}]$ for all
$1 \leq i < k \leq n$.
\end{theorem}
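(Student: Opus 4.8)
The plan is to obtain Theorem~\ref{th-zizj} by assembling it from the two results that precede it, namely Lemma~\ref{le-zizj-1} and Theorem~\ref{epsilonbeta}; there is essentially no new analytic input, and the only point that needs attention is an index relabeling that lets one apply the formula \eqref{V-e-beta} for the vector fields $\sigma_{e_\beta}$ on a full Bott--Samelson variety to the vector fields $\sigma_i$, which live on the tail varieties $Z_{(s_{i+1},\ldots,s_n)}$.

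First I would invoke Lemma~\ref{le-zizj-1}, which already gives, in the coordinates $(z_1,\ldots,z_n)$ on $\O^\gamma$,
\[
\{z_i, z_k\} =
\begin{cases}
\la \gamma^i(\alpha_i),\, \gamma^k(\alpha_k)\ra\, z_iz_k, & \mbox{if } \gamma_i = e,\\[2pt]
-\la \gamma^i(\alpha_i),\, \gamma^k(\alpha_k)\ra\, z_iz_k - \la \alpha_i,\alpha_i\ra\, \sigma_i(z_k), & \mbox{if } \gamma_i = s_i,
\end{cases}
\qquad 1\le i<k\le n,
\]
where $\sigma_i(z_k)$ is computed through the parametrization of $\O^{(\gamma_{i+1},\ldots,\gamma_n)}\subset Z_{(s_{i+1},\ldots,s_n)}$ recalled in Lemma~\ref{le-zizj-1}. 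Thus the whole content of Theorem~\ref{th-zizj} reduces to showing that this local function $\sigma_i(z_k)$ is the polynomial displayed in \eqref{eq-Vi-zk}, together with the sharper membership claim for $\gamma=\bfu$.

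Next I would observe that the vector field $\sigma_i$ of \eqref{eq-Vi} is exactly $\sigma_{e_{\alpha_i}}^{(i+1)}$ in the notation of \eqref{eq-sigma-k}: both are the infinitesimal generator, in the direction $e_{\alpha_i}\in\b$, of the left $B$-action on $Z_{(s_{i+1},\ldots,s_n)}$, and the parametrization used in Lemma~\ref{le-zizj-1} is the map $\Phi^{(\gamma_{i+1},\ldots,\gamma_n)}$ of \eqref{eq-Phi-gamma} for the tail sequence, obtained by keeping the last $n-i$ factors of $\Phi^\gamma$. Hence I may apply Theorem~\ref{epsilonbeta} with $\bfu$ replaced by $(s_{i+1},\ldots,s_n)$, with $\gamma$ replaced by $(\gamma_{i+1},\ldots,\gamma_n)$, with $\beta=\alpha_i$, and with the coordinate indices shifted by $i$ (so that its running index runs over $i+1\le k\le n$ and its coordinates $z_1,\ldots,z_{k-1}$ become our $z_{i+1},\ldots,z_{k-1}$). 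By the very definitions \eqref{eq-phi-ik}--\eqref{eq-psi-ik}, the polynomials produced by this specialization are $\phi_{i,k}^\gamma(z_{i+1},\ldots,z_{k-1})$ and $\psi_{i,k}^\gamma(z_{i+1},\ldots,z_{k-1})$, so reading off the two cases $\gamma_k=s_k$ and $\gamma_k=e$ of \eqref{V-e-beta} yields \eqref{eq-Vi-zk} verbatim; in particular $\sigma_i(z_k)\in\Cset[z_{i+1},\ldots,z_k]$. Substituting this into the displayed identity from Lemma~\ref{le-zizj-1} gives \eqref{eq-zizj-th}.

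Finally, for the last assertion: when $\gamma=\bfu$ every $\gamma_j=s_j$, so the index set $\{\,1\le i\le k-1:\gamma_i=e\,\}$ occurring in the definition \eqref{psibeta} of $\psi_\beta^\gamma$ is empty, whence $\psi_{i,k}^\gamma\equiv 0$; only the $\gamma_k=s_k$ branch of \eqref{eq-Vi-zk} survives and reads $\sigma_i(z_k)=\phi_{i,k}^\gamma(z_{i+1},\ldots,z_{k-1})\in\Cset[z_{i+1},\ldots,z_{k-1}]$, as claimed. Since both Lemma~\ref{le-zizj-1} and Theorem~\ref{epsilonbeta} are already available, this theorem is really a packaging statement, and the only step that genuinely needs care --- the ``hard part'' of this particular argument --- is the verification that the last $n-i$ coordinates of $\Phi^\gamma$ assemble into $\Phi^{(\gamma_{i+1},\ldots,\gamma_n)}$ and that under this identification $\sigma_i$ becomes $\sigma_{e_{\alpha_i}}^{(i+1)}$; once that is in place, everything reduces to a direct substitution.
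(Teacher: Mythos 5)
Your proposal is correct and takes exactly the route the paper intends: the paper offers no separate proof, stating only that Theorem \ref{th-zizj} ``is a combination of Lemma \ref{le-zizj-1} and Theorem \ref{epsilonbeta},'' and your argument supplies precisely the suppressed details (the identification $\sigma_i=\sigma_{e_{\alpha_i}}^{(i+1)}$ under the tail parametrization, the index shift feeding into \eqref{eq-phi-ik}--\eqref{eq-psi-ik} and \eqref{eq-Vi-zk}, and the vanishing of $\psi_{i,k}^{\gamma}$ when $\gamma=\bfu$). Nothing is missing.
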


\section{The polynomial Poisson algebras $(\Cset[z_1, \ldots, z_n], \{\, , \, \}_\gamma)$}\label{sec-pi-gamma}
%\subsection{Definition of $(\Cset[z_1, \ldots, z_n], \{\, , \, \}_\gamma)$}
Throughout $\S$\ref{sec-pi-gamma}, fix a Bott-Samelson variety $\Zu$ with  $\bfu = (s_1, \ldots, s_n) = (s_{\alpha_1}, \ldots, s_{\alpha_n})$ and 
$\alpha_i \in \Gamma$ for $1 \leq i \leq n$,

\begin{definition}\label{de-pi-gamma}
{\em Given a set 
$\{e_\alpha: \alpha \in \Gamma\}$ 
of root vectors for the simple roots, for each $\gamma \in \Upu$, 
let $\{\, , \, \}_\gamma$ denote the
Poisson structure on the polynomial algebra $\Cset[z_1, \ldots, z_n]$ given by (\ref{eq-zizj-th})
in Theorem \ref{th-zizj}.
}
\end{definition}

The coordinates $(z_1, \ldots, z_n)$ on the affine charts $\O^\gamma$ of $\Zu$ depend on the choice of the
set $\{e_\alpha: \alpha \in \Gamma\}$ of root vectors for the simple roots. A different choice of such a set
gives rise to re-scalings of the coordinates and thus may result in a different Poisson bracket on the 
polynomial algebra of the coordinate functions.
We show in $\S$\ref{subsec-pi-gamma} that this is not the case.

\subsection{Re-scaling of coordinates}\label{subsec-pi-gamma}
Let $\{e_\alpha: \alpha \in \Gamma\}$ and $\{e_\alpha^\prime: \alpha \in \Gamma\}$ be two sets of choices
of root vectors for the
simple roots. 
For $\alpha \in \Gamma$, let $u_{\pm \alpha}, u_{\pm \alpha}^\prime: \Cset \to G$ be the one-parameter subgroups of $G$ respectively determined by the $\sl(2)$-triples $\{e_\alpha, e_{-\alpha}, h_\alpha\}$ and 
$\{e_\alpha^\prime, e_{-\alpha}^\prime, h_\alpha\}$  (see $\S$\ref{notations}), and let
\[
\dot{s}_\alpha=u_{\alpha}(-1)u_{-\alpha}\left(1\right)u_{\alpha}\left(-1\right)\in N_G(T) \hs \mbox{and} \hs
\dot{s}_\alpha^\prime=u_{\alpha}^\prime(-1)u_{-\alpha}^\prime\left(1\right)u_{\alpha}^\prime\left(-1\right)\in N_G(T). 
\]
For $z \in \Cset$, and $\kappa \in \{e, s_\alpha\}$, let 
\[
p_{\kappa, \alpha}(z) = u_{-\kappa(\alpha)}(z) \dot{\kappa} \in P_{s_\alpha} \hs \mbox{and} \hs
p_{\kappa, \alpha}^\prime(z) = u_{-\kappa(\alpha)}^\prime(z) \dot{\kappa}^\prime \in P_{s_\alpha},
\]
where recall that $\dot{e} = \dot{e}^\prime = e \in G$. 
For each $\gamma = (\gamma_1, \ldots, \gamma_n) \in \Upu$, one then has two sets of coordinates 
 $(z_1, \ldots, z_n)$ and $(z_1^\prime, \ldots, z_n^\prime)$ on $\O^\gamma$,
respectively by
\begin{align}\label{eq-z-0}
&\Cset^n \ni (z_1, \ldots, z_n) \longmapsto [p_{\gamma_1, \alpha_1}(z_1), \; \ldots, \; p_{\gamma_n, \alpha_n}(z_n)],\\
\label{eq-z-prime-0}
&\Cset^n \ni (z_1^\prime, \ldots, z_n^\prime) \longmapsto [p_{\gamma_1, \alpha_1}^\prime(z_1^\prime), \; \ldots, \; 
p_{\gamma_n, \alpha_n}^\prime(z_n^\prime)].
\end{align}
The main result of $\S$\ref{subsec-pi-gamma} is the following Proposition \ref{pr-indep}.

\begin{proposition}\label{pr-indep}
Let $\gamma = (\gamma_1, \ldots, \gamma_n) \in \Upu$ and let the two sets of coordinates 
$(z_1, \ldots, z_n)$ and $(z_1^\prime, \ldots, z_n^\prime)$ on $\O^\gamma$ be given as
in \eqref{eq-z-0} and \eqref{eq-z-prime-0}. For $1 \leq i < k \leq n$, let $\{z_i, z_k\} = f_{i,k} (z_1, \ldots, z_n)
\in \Cset[z_1, \ldots, z_n]$. Then 
\[
\{z_i^\prime, \; z_k^\prime\} = f_{i,k}(z_1^\prime, \, \ldots, \, z_n^\prime), \hs 1 \leq i < k \leq n.
\]
\end{proposition}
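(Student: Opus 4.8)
The plan is to realize the change of root vectors as conjugation by a single torus element and then invoke $T$-homogeneity of the Poisson polynomials. Since $\dim \g_\alpha = 1$, for each $\alpha \in \Gamma$ there is a unique $\lambda_\alpha \in \Cset^\times$ with $e_\alpha' = \lambda_\alpha e_\alpha$. The simple roots are linearly independent in the character lattice $X^*(T)$, and $\Cset^\times$ is divisible, so there exists $t_\lambda \in T$ with $\alpha(t_\lambda) = \lambda_\alpha$ for all $\alpha \in \Gamma$; then $e_\alpha' = \mathrm{Ad}_{t_\lambda}(e_\alpha)$, and since $h_\alpha \in \h$ is fixed by $T$, the defining relation $[e_\alpha', e_{-\alpha}'] = h_\alpha$ together with uniqueness of $e_{-\alpha}'$ forces $e_{-\alpha}' = \mathrm{Ad}_{t_\lambda}(e_{-\alpha})$. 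Hence $u_{\pm\alpha}'(z) = t_\lambda u_{\pm\alpha}(z) t_\lambda^{-1}$, $\dot{s}_\alpha' = t_\lambda \dot{s}_\alpha t_\lambda^{-1}$, and therefore $p_{\kappa,\alpha}'(z) = t_\lambda\, p_{\kappa,\alpha}(z)\, t_\lambda^{-1}$ for every $\kappa \in \{e, s_\alpha\}$.

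Substituting this into \eqref{eq-z-prime-0} and telescoping the conjugating factors, the quotient defining $\Zu$ — applied with all group parameters equal to $t_\lambda \in B$ — gives
\[
\Phi'^\gamma(z_1,\ldots,z_n) = [t_\lambda p_{\gamma_1,\alpha_1}(z_1) t_\lambda^{-1},\; \ldots,\; t_\lambda p_{\gamma_n,\alpha_n}(z_n) t_\lambda^{-1}] = [t_\lambda p_{\gamma_1,\alpha_1}(z_1),\; p_{\gamma_2,\alpha_2}(z_2),\; \ldots,\; p_{\gamma_n,\alpha_n}(z_n)],
\]
and the remaining leftmost $t_\lambda$ is precisely the left $T$-action \eqref{eq-P-on-Zu} on $\Zu$, so $\Phi'^\gamma(z_1,\ldots,z_n) = t_\lambda \cdot \Phi^\gamma(z_1,\ldots,z_n)$. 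Combining with the explicit torus action \eqref{hvectorfield}, I get $\Phi'^\gamma(z_1,\ldots,z_n) = \Phi^\gamma(c_1 z_1,\ldots, c_n z_n)$ where $c_j = t_\lambda^{-\gamma^j(\alpha_j)} \in \Cset^\times$; equivalently, as regular functions on $\O^\gamma$ one has $z_j = c_j z_j'$ for $1 \leq j \leq n$.

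The last step uses $T$-invariance of $\pi_n$ (noted in $\S$\ref{piBS}). Applying an arbitrary $t \in T$ to the identity $\{z_i, z_k\} = f_{i,k}(z_1,\ldots,z_n)$ and using \eqref{hvectorfield} shows that $f_{i,k}$ is $T$-homogeneous of weight $-\gamma^i(\alpha_i) - \gamma^k(\alpha_k)$, i.e.
\[
f_{i,k}\!\left(t^{-\gamma^1(\alpha_1)}w_1,\,\ldots,\,t^{-\gamma^n(\alpha_n)}w_n\right) = t^{-\gamma^i(\alpha_i) - \gamma^k(\alpha_k)}\, f_{i,k}(w_1,\ldots,w_n)
\]
as a polynomial identity in $w_1,\ldots,w_n$, for every $t \in T$. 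Specializing $t = t_\lambda$ yields $f_{i,k}(c_1 w_1,\ldots,c_n w_n) = c_i c_k\, f_{i,k}(w_1,\ldots,w_n)$. Therefore, using $z_j = c_j z_j'$,
\[
\{z_i', z_k'\} = c_i^{-1} c_k^{-1} \{z_i, z_k\} = c_i^{-1} c_k^{-1}\, f_{i,k}(c_1 z_1',\ldots, c_n z_n') = f_{i,k}(z_1',\ldots,z_n'),
\]
which is the assertion of Proposition \ref{pr-indep}.

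I expect no deep obstacle here; the step demanding the most care is the reduction of an arbitrary change of simple root vectors to conjugation by one torus element $t_\lambda$ (existence via divisibility of $\Cset^\times$, and the uniqueness clause pinning down $e_{-\alpha}'$), followed by the bookkeeping in the telescoping, where one must check that the surviving factor is genuinely the left $T$-action on $\Zu$ and not a leftover right $B$-translation.
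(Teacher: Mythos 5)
Your proof is correct. The overall architecture is the same as the paper's: both arguments hinge on choosing $t \in T$ with $t^{\alpha} = \lambda_\alpha$ for all simple $\alpha$ (your existence argument via linear independence of the simple roots and divisibility of $\Cset^\times$ is equivalent to the paper's passage through $G/Z(G)$), showing that the primed parametrization is the $t$-translate of the unprimed one, and then concluding by $T$-invariance of $\pi_n$ and the resulting homogeneity of the $f_{i,k}$. Where you genuinely diverge is in how the key identity $\Phi'^\gamma = t\cdot\Phi^\gamma$ is established: the paper proves it via Lemma \ref{le-pp}, which conjugates each factor by the root-dependent element $\alpha^\vee(\sqrt{\lambda_\alpha})$ and records the effect case by case, and then runs an induction on $n$ in Lemma \ref{le-tp} that requires tracking the torus element $\alpha_1^\vee(\lambda_{\alpha_1})s_1(t)$ passed to the remaining factors. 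You instead observe that the whole change of $\sl(2,\Cset)$-triples is conjugation by the single element $t_\lambda$ (since $e'_{\pm\alpha} = \mathrm{Ad}_{t_\lambda}(e_{\pm\alpha})$ and hence $p'_{\kappa,\alpha}(z) = t_\lambda\, p_{\kappa,\alpha}(z)\, t_\lambda^{-1}$), so that the identity follows in one line by telescoping the conjugating factors through the $B^n$-quotient. This buys a shorter and more conceptual derivation that avoids both the square roots $\sqrt{\lambda_\alpha}$ and the induction; the paper's route, in exchange, records the explicit factor-by-factor formula \eqref{eq-alpha-prime}, which is of some independent bookkeeping value. The remaining steps (the weight computation $f_{i,k}(t^{-\gamma^1(\alpha_1)}w_1,\ldots) = t^{-\gamma^i(\alpha_i)-\gamma^k(\alpha_k)}f_{i,k}(w)$ and the final cancellation of the scalars $c_i, c_k$) match the paper's Remark \ref{re-indep} and end of proof exactly.
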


\begin{remark}\label{re-indep}
{\em It is easy to see that the two sets of coordinates are related by re-scalings, i.e., there exist $\delta_1, \ldots,
\delta_n \in \Cset^\times$ such that $z_i^\prime = \delta_i z_i$ for each $1 \leq i \leq n$. One thus has 
\[
\{z_i^\prime, \; z_k^\prime\} = \delta_i \delta_k \{z_i, \, z_k\} 
=\delta_i\delta_k f_{i,k}(z_1, \, \ldots, \, z_n)= \delta_i\delta_k f_{i,k}(\delta_1^{-1}z_1^\prime, \, \ldots, \, \delta_n^{-1}z_n^\prime),
\]
for all $1 \leq i < k \leq n$. 
Proposition \ref{pr-indep} states that the polynomials $f_{i,k}$  satisfy
\[
\delta_i\delta_k f_{i,k}(\delta_1^{-1}z_1^\prime, \, \ldots, \, \delta_n^{-1}z_n^\prime)
=f_{i,k}(z_1^\prime, \, \ldots, \, z_n^\prime), \hs  1 \leq i < k \leq n.
\]
We will show in Lemma \ref{le-tp} that the re-scaling of the coordinates comes from the action of 
an element $t \in T$, from which Proposition \ref{pr-indep} will follow. 
\hfill $\diamond$
}
\end{remark}

\begin{lemma}\label{le-pp}
Let $\alpha \in \Gamma$ and let $\lambda_\alpha \in \Cset^\times$ be such that $e_\alpha^\prime = 
\lambda_\alpha e_\alpha$. Then for $\kappa \in \{e, s_\alpha\}$ and $z \in \Cset$, one has
\begin{equation}\label{eq-alpha-prime}
p_{\kappa, \alpha}^\prime(z)  = \begin{cases} p_{\kappa, \alpha} (\lal z) \alpha^\vee (1/\lal), & \;\; \kappa = s_\alpha,\\
p_{\kappa, \alpha}(z/\lal), & \;\; \kappa = e.\end{cases}
\end{equation}
\end{lemma}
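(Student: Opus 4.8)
The plan is to reduce everything to the relation between the two $\sl(2,\Cset)$-triples and then push that relation through the defining formulas. \emph{First} I would observe that since $e_\alpha^\prime = \lal e_\alpha$ and both triples satisfy $[e_\alpha^\prime, e_{-\alpha}^\prime] = h_\alpha = [e_\alpha, e_{-\alpha}]$, the uniqueness of $e_{-\alpha} \in \g_{-\alpha}$ with $[e_\alpha, e_{-\alpha}] = h_\alpha$ recalled in $\S$\ref{notations} forces $e_{-\alpha}^\prime = \lal^{-1} e_{-\alpha}$. Exponentiating gives, for all $z \in \Cset$,
\[
u_\alpha^\prime(z) = \exp(z e_\alpha^\prime) = u_\alpha(\lal z), \hs u_{-\alpha}^\prime(z) = \exp(z e_{-\alpha}^\prime) = u_{-\alpha}(z/\lal).
\]
The key point is the asymmetry: the positive root subgroup rescales by $\lal$ while the negative one rescales by $\lal^{-1}$.

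\emph{Next}, the case $\kappa = e$ is immediate, since then $\dot{\kappa} = \dot{\kappa}^\prime = e$ and by \eqref{eq-p-alpha} one has $p_{e,\alpha}^\prime(z) = u_{-\alpha}^\prime(z) = u_{-\alpha}(z/\lal) = p_{e,\alpha}(z/\lal)$. For $\kappa = s_\alpha$ the substantive step is to compute $\dot{s}_\alpha^\prime$. Using \eqref{eq-dots} and the first step,
\[
\dot{s}_\alpha^\prime = u_\alpha^\prime(-1)\, u_{-\alpha}^\prime(1)\, u_\alpha^\prime(-1) = u_\alpha(-\lal)\, u_{-\alpha}(1/\lal)\, u_\alpha(-\lal),
\]
and then I would apply the homomorphism $\theta_\alpha: SL(2,\Cset) \to G$ of $\S$\ref{notations}, which reduces the claim to the elementary $SL(2,\Cset)$ identity
\[
\begin{pmatrix} 1 & -\lal \\ 0 & 1\end{pmatrix}\!\begin{pmatrix} 1 & 0 \\ 1/\lal & 1\end{pmatrix}\!\begin{pmatrix} 1 & -\lal \\ 0 & 1\end{pmatrix} = \begin{pmatrix} 0 & -\lal \\ 1/\lal & 0\end{pmatrix} = \begin{pmatrix} 0 & -1 \\ 1 & 0\end{pmatrix}\!\begin{pmatrix} 1/\lal & 0 \\ 0 & \lal\end{pmatrix}.
\]
Applying $\theta_\alpha$ and recalling that $\dot{s}_\alpha = \theta_\alpha\!\left(\begin{pmatrix} 0 & -1 \\ 1 & 0\end{pmatrix}\right)$ (which is just \eqref{eq-dots} evaluated in $SL(2,\Cset)$) and $\alpha^\vee(t) = \theta_\alpha\!\left(\begin{pmatrix} t & 0 \\ 0 & t^{-1}\end{pmatrix}\right)$, one obtains $\dot{s}_\alpha^\prime = \dot{s}_\alpha\, \alpha^\vee(1/\lal)$. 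Finally, since $s_\alpha(\alpha) = -\alpha$, formula \eqref{eq-p-alpha} gives $p_{s_\alpha,\alpha}(z) = u_\alpha(z)\dot{s}_\alpha$, and therefore
\[
p_{s_\alpha,\alpha}^\prime(z) = u_\alpha^\prime(z)\, \dot{s}_\alpha^\prime = u_\alpha(\lal z)\, \dot{s}_\alpha\, \alpha^\vee(1/\lal) = p_{s_\alpha,\alpha}(\lal z)\, \alpha^\vee(1/\lal),
\]
which is \eqref{eq-alpha-prime}.

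I do not expect any genuine obstacle: the only computation carrying content is the $2\times 2$ matrix identity displayed above, and everything else is bookkeeping with the definitions collected in $\S$\ref{notations}. The one point deserving care is keeping track of the asymmetry between $u_\alpha$ and $u_{-\alpha}$ observed in the first step, since it is exactly this asymmetry that produces the correction factor $\alpha^\vee(1/\lal)$ when $\kappa = s_\alpha$ and its absence when $\kappa = e$.
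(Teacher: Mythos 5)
Your proof is correct and follows essentially the same route as the paper: both arguments reduce to the $SL(2,\Cset)$ picture via $\theta_\alpha$ and hinge on the same intermediate identity $\dot{s}_\alpha^\prime = \dot{s}_\alpha\,\alpha^\vee(1/\lal)$ (the paper's \eqref{eq-dot-prime}). The only cosmetic difference is that the paper obtains this by observing $\theta_\alpha^\prime = {\rm Ad}_{\alpha^\vee(\sal)}\circ\theta_\alpha$ and conjugating, whereas you compute the $2\times 2$ matrix product directly from \eqref{eq-dots}, which avoids choosing a square root of $\lal$.
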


\begin{proof}
Let $\theta_\alpha,  \theta_{\alpha}^\prime:  SL(2, {\mathbb C}) \rightarrow G$ be the Lie group homomorphisms respectively
determined by the $\sl(2)$-triples $\{e_\alpha, e_{-\alpha}, h_\alpha\}$ and 
$\{e_\alpha^\prime, e_{-\alpha}^\prime, h_\alpha\}$ (see $\S$\ref{notations}). 
Then 
\[
\theta_\alpha^\prime = {\rm Ad}_{\alpha^\vee(\sal)} \circ \theta_\alpha,
\]
where ${\rm Ad}_{\alpha^\vee(\sal)}: G \to G$ denotes conjugation by $\alpha^\vee(\sal) \in T$.
It follows that 
\begin{equation}\label{eq-dot-prime}
\dot{s}_\alpha^\prime = {\rm Ad}_{\alpha^\vee(\sal)}(\dot{s}_\alpha) = \dot{s}_\alpha \,\alpha^\vee(1/\lal),
\end{equation}
and thus
\[
p_{\kappa, \alpha}^\prime(z) = {\rm Ad}_{\alpha^\vee(\sal)} (p_{\kappa, \alpha}(z)) = \begin{cases} p_{\kappa, \alpha} (\lal z) \alpha^\vee (1/\lal), & \;\; \kappa = s_\alpha,\\
p_{\kappa, \alpha}(z/\lal), & \;\; \kappa = e.\end{cases}
\]
\end{proof}

Let $\alpha \in \Gamma$ and $\lambda_\alpha \in \Cset^\times$ be as in Lemma \ref{le-pp}. 
Choose either one of the two square roots of $\lal$ in $\Cset^\times$
and denote it by $\sqrt{\lambda_\alpha}$. Note that $e_{-\alpha}^\prime =\lal^{-1} 
e_{-\alpha}$ for each $\alpha \in \Gamma$. 
Choose any $t \in T$ such that
\begin{equation}\label{eq-t-alpha}
t^\alpha = \lal, \hs \forall \; \alpha \in \Gamma.
\end{equation}
Such an element indeed exists, as it can be taken to be any of the preimages in $T \subset G$ of
the unique such element in the maximal torus $T/Z(G)$ of $G_{\rm ad} \stackrel{{\rm def}}{=} G/Z(G)$, where $Z(G)$ is the center of $G$.
Recall from (\ref{eq-P-on-Zu}) that $\cdot$ denotes the left action of $B$ on $\Zu$.

\begin{lemma}\label{le-tp}
For any $t \in T$ satisfying (\ref{eq-t-alpha}) and for any
$\gamma = (\gamma_1, \ldots, \gamma_n) \in \!\Upu$,  one has
\begin{equation}\label{eq-tpp}
t \cdot [p_{\gamma_1, \alpha_1}(z_1), \; \ldots, \; p_{\gamma_n, \alpha_n}(z_n)] =
[p_{\gamma_1, \alpha_1}^\prime(z_1), \; \ldots, \; 
p_{\gamma_n, \alpha_n}^\prime(z_n)], \hs  (z_1, \ldots, z_n) \in \!\Cset^n.
\end{equation}
\end{lemma}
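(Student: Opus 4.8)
The plan is to push the torus element $t$ through the product $[p_{\gamma_1,\alpha_1}(z_1), \ldots, p_{\gamma_n,\alpha_n}(z_n)]$ from the left, one factor at a time, absorbing at each stage the torus element that has accumulated into the defining $B$-equivalence of $\Zu$, and to verify that what emerges in the $i$-th slot is precisely $p_{\gamma_i,\alpha_i}^\prime(z_i)$. Recall that the $T$-action on $\Zu$ in \eqref{eq-P-on-Zu} is left translation on the first factor.

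Set $P_i = p_{\gamma_i,\alpha_i}(z_i)$ and $P_i^\prime = p_{\gamma_i,\alpha_i}^\prime(z_i)$, and write $\lambda_\alpha \in \Cset^\times$ for the scalars with $e_\alpha^\prime = \lambda_\alpha e_\alpha$ as in Lemma \ref{le-pp}. The heart of the argument is a single local identity: \emph{if $t_{i-1} \in T$ satisfies $t_{i-1}^\alpha = \lambda_\alpha$ for all $\alpha \in \Gamma$, then $t_{i-1}P_i = P_i^\prime\, t_i$ for some $t_i \in T$ which again satisfies $t_i^\alpha = \lambda_\alpha$ for all $\alpha \in \Gamma$.} To prove it, I would first move $t_{i-1}$ past the unipotent factor of $P_i$ using ${\rm Ad}_{t_{i-1}}(e_{\pm\alpha_i}) = t_{i-1}^{\pm\alpha_i}e_{\pm\alpha_i} = \lambda_{\alpha_i}^{\pm 1}e_{\pm\alpha_i}$; since $-\gamma_i(\alpha_i) = \alpha_i$ when $\gamma_i = s_i$ and $= -\alpha_i$ when $\gamma_i = e$, this gives $t_{i-1}u_{-\gamma_i(\alpha_i)}(z_i) = u_{-\gamma_i(\alpha_i)}(\lambda_{\alpha_i} z_i)\,t_{i-1}$ in the first case and $u_{-\gamma_i(\alpha_i)}(z_i/\lambda_{\alpha_i})\,t_{i-1}$ in the second. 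Then conjugate $t_{i-1}$ past the representative $\dot\gamma_i$ (trivially when $\gamma_i = e$, and using $\dot{s}_{\alpha_i}^{-1}t_{i-1}\dot{s}_{\alpha_i} = s_{\alpha_i}(t_{i-1})$ when $\gamma_i = s_i$). Finally, invoke Lemma \ref{le-pp} to recognize $p_{\gamma_i,\alpha_i}(\lambda_{\alpha_i} z_i) = P_i^\prime\,\alpha_i^\vee(\lambda_{\alpha_i})$ in the reflection case and $p_{\gamma_i,\alpha_i}(z_i/\lambda_{\alpha_i}) = P_i^\prime$ in the other case. This produces $t_i = t_{i-1}$ when $\gamma_i = e$ and $t_i = \alpha_i^\vee(\lambda_{\alpha_i})\,s_{\alpha_i}(t_{i-1})$ when $\gamma_i = s_i$.

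With the local identity in hand, the lemma follows by induction on $i$, starting from $t_0 = t$, for which the hypothesis \eqref{eq-t-alpha} is exactly the invariant. Using that $[\,\ldots, q_i b, q_{i+1}, \ldots\,] = [\,\ldots, q_i, b q_{i+1}, \ldots\,]$ for $b \in B$, and $[q_1, \ldots, q_n b] = [q_1, \ldots, q_n]$ from the final quotient by $B$, one then computes
\[
t\cdot[P_1,\ldots,P_n] = [\,tP_1,\, P_2, \ldots, P_n\,] = [\,P_1^\prime,\, t_1 P_2,\, P_3, \ldots, P_n\,] = \cdots = [\,P_1^\prime, \ldots, P_{n-1}^\prime,\, t_{n-1}P_n\,] = [\,P_1^\prime, \ldots, P_n^\prime\,],
\]
which is \eqref{eq-tpp}.

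I expect the only genuinely nontrivial point to be the persistence of the invariant $t_i^\alpha = \lambda_\alpha$ across a reflection step: one must check that $\alpha\bigl(\alpha_i^\vee(\lambda_{\alpha_i})\,s_{\alpha_i}(t_{i-1})\bigr) = \lambda_\alpha$ for every $\alpha \in \Gamma$. This reduces to the elementary facts $\alpha\bigl(\alpha_i^\vee(\lambda_{\alpha_i})\bigr) = \lambda_{\alpha_i}^{\langle \alpha, \alpha_i^\vee\rangle}$ and $s_{\alpha_i}(\alpha) = \alpha - \langle \alpha, \alpha_i^\vee\rangle\,\alpha_i$, whence the powers of $\lambda_{\alpha_i}$ cancel; morally, the correction factor $\alpha_i^\vee(1/\lambda_{\alpha_i})$ supplied by Lemma \ref{le-pp} is exactly what undoes the twist by $s_{\alpha_i}$. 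Everything else is routine bookkeeping with the $B^n$-action defining $\Zu$.
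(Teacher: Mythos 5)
Your proof is correct and takes essentially the same route as the paper: the paper runs the identical computation as an induction on $n$, pushing the accumulated torus element $t_{i-1}$ (becoming $t_{i-1}$ itself when $\gamma_i=e$ and $\alpha_i^\vee(\lambda_{\alpha_i})\,s_{\alpha_i}(t_{i-1})$ when $\gamma_i=s_i$) into the next slot via Lemma \ref{le-pp} and verifying the same invariant $t_i^\alpha=\lambda_\alpha$ by the same cancellation of powers of $\lambda_{\alpha_i}$. Your iterative phrasing with an explicit loop invariant is just a repackaging of that induction.
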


\begin{proof} We prove Lemma \ref{le-tp} by induction on $n$.
When $n = 1$, $t^{-\gamma_1(\alpha_1)} = t^{\alpha_1} = \lambda_{\alpha_1}$ if $\gamma_1 = s_1$ and
$t^{-\gamma_1(\alpha_1)} = t^{-\alpha_1} = 1/\lambda_{\alpha_1}$ if $\gamma_1 = e$, so by Lemma \ref{le-pp},
\[
t \cdot [p_{\gamma_1, \alpha_1}(z_1)] = [p_{\gamma_1, \alpha_1}(t^{-\gamma_1(\alpha_1)}z)] =[p_{\gamma_1, \alpha_1}^\prime(z)].
\] 
Let $n \geq 2$ and 
assume that Lemma \ref{le-tp} holds for $n-1$. Then 
\begin{align*}
t \cdot [p_{\gamma_1, \alpha_1}(z_1), &\; \ldots, \; p_{\gamma_n, \alpha_n}(z_n)] \\
&  =
[p_{\gamma_1, \alpha_1}(t^{-\gamma_1(\alpha_1)}z_1), \; \gamma_1(t)p_{\gamma_2, \alpha_2}(z_2), \;
p_{\gamma_3, \alpha_3}(z_3), \;\ldots, \; p_{\gamma_n, \alpha_n}(z_n)].
\end{align*}
If $\gamma_1 = e$, then $p_{\gamma_1, \alpha_1}(t^{-\gamma_1(\alpha_1)}z_1) =p_{\gamma_1, \alpha_1}(z_1/\lambda_{\alpha_1})
=p_{\gamma_1, \alpha_1}^\prime(z_1)$, so (\ref{eq-tpp}) holds by the induction assumption.
Assume that $\gamma_1 = s_1$. Then by Lemma \ref{le-pp},
\begin{align*}
t \cdot [p_{\gamma_1, \alpha_1}(z_1), &\; \ldots, \; p_{\gamma_n, \alpha_n}(z_n)] \\
& =
[p_{\gamma_1, \alpha_1}^\prime(z_1), \; \alpha_1^\vee(\lambda_{\alpha_1})s_1(t)p_{\gamma_2, \alpha_2}(z_2), \;
p_{\gamma_3, \alpha_3}(z_3), \;\ldots, \; p_{\gamma_n, \alpha_n}(z_n)].
\end{align*}
Consider now  the element $\alpha_1^\vee(\lambda_{\alpha_1})s_1(t) \in T$. For every $\alpha \in \Gamma$, one has
\[
(\alpha_1^\vee(\lambda_{\alpha_1})s_1(t))^\alpha =\lambda_{\alpha_1}^{\frac{2\la \alpha, \alpha_1\ra}{\la \alpha_1, \alpha_1\ra}}
t^{s_1(\alpha)} =  t^{\frac{2\la \alpha, \alpha_1\ra}{\la \alpha_1, \alpha_1\ra}\alpha_1 + s_1(\alpha)} = t^\alpha = \lal.
\]
By the induction assumption, one then has
\begin{align*}
\alpha_1^\vee(\lambda_{\alpha_1})s_1(t)\cdot [p_{\gamma_2, \alpha_2}(z_2), &\;
p_{\gamma_3, \alpha_3}(z_3), \;\ldots, \; p_{\gamma_n, \alpha_n}(z_n)]\\
&  = 
[p_{\gamma_2, \alpha_2}^\prime(z_2), \;\ldots, \; p_{\gamma_n, \alpha_n}^\prime(z_n)] \in Z_{(s_1, \ldots, s_n)},
\end{align*}
and hence (\ref{eq-tpp}) holds.
\end{proof}

\bigskip
\noindent
{\it Proof of Proposition \ref{pr-indep}}: 
Let $t$ be any element in $T$ satisfying \eqref{eq-t-alpha}. 
By setting
\[
[p_{\gamma_1, \alpha_1}(z_1), \; \ldots, \; p_{\gamma_n, \alpha_n}(z_n)] = 
[p_{\gamma_1, \alpha_1}^\prime(z_1^\prime), \; \ldots, \; 
p_{\gamma_n, \alpha_n}^\prime(z_n^\prime)] \in \O^\gamma,
\]
and by Lemma \ref{le-tp}, one has
\[
[p_{\gamma_1, \alpha_1}(z_1^\prime), \; \ldots, \; 
p_{\gamma_n, \alpha_n}(z_n^\prime)] = t^{-1} \cdot [p_{\gamma_1, \alpha_1}(z_1), \; \ldots, \; p_{\gamma_n, \alpha_n}(z_n)].
\]
It follows from \eqref{hvectorfield} that 
\[
z_i^\prime  = (t^{-1})^* z_i = t^{\gamma^i(\alpha_i)} z_i, \hs 1 \leq i \leq n,
\]
 where $(t^{-1})^*: {\rm Reg}(\O^\gamma) \to {\rm Reg}(\O^\gamma)$
is given by $((t^{-1})^*f)(q) = f(t^{-1}\cdot q)$ for $f \in {\rm Reg}(\O^\gamma)$ and $q \in \O^\gamma$, and ${\rm Reg}(\O^\gamma)$ is
the algebra of regular functions on $\O^\gamma$.
As the action of $T$ on $(\Zu, \pi_n)$ is by Poisson isomorphisms (see $\S$\ref{piBS}), one has, for any $1 \leq i , k \leq n$,
\begin{align*}
\{z_i^\prime, \; z_k^\prime\} &= \{(t^{-1})^* z_i, \, (t^{-1})^* z_k\} = (t^{-1})^*\{z_i, \, z_k\}
= ((t^{-1})^* f_{i,k})(z_1, \ldots, z_n)\\
&= f_{i,k}(z_1^\prime, \ldots, z_n^\prime).
\end{align*}
This finishes the proof of Proposition \ref{pr-indep}.

\subsection{The Poisson algebra $({\mathbb{C}}[z_1, \ldots, z_n], \{\, , \, \}_\gamma)$ as an iterated 
$T$-Poisson Ore extension of $\Cset$}\label{subsec-Poi-Ore}
Recall \cite{GL, L-L:prime, Oh} that a Poisson polynomial algebra 
\[
A =(\Cset[z_1, \ldots, z_n], \;\{\, , \, \})
\]
is said to be an {\it iterated Poisson Ore extension} (of $\mathbb{C}$) if the Poisson bracket
$\{\, , \,\}$ satisfies
\[
\{z_i, \Cset[z_{i+1},\ldots, z_n] \} \subset z_i\Cset[z_{i+1},\ldots, z_n] + \Cset[z_{i+1},\ldots, z_n], \hs \hs
1 \leq i \leq n-1.
\]
In such a case, define the derivations $a_i$ and $b_i$ on $\Cset[z_{i+1}, \ldots, z_n]$ by
\begin{equation}\label{eq-zi-f}
\{z_i, f\} = z_i a_i(f) + b_i(f), \hs  1 \leq i \leq n-1, \;\; f \in \Cset[z_{i+1},\ldots, z_n].
\end{equation}
Then \cite{Oh} for each $1 \leq i \leq n-1$, 
$a_i$ is a Poisson derivation,
and $b_i$ an $a_i$-Poisson derivation, of the Poisson subalgebra $\Cset[z_{i+1}, \ldots, z_n]$  of the Poisson algebra
$A$,  i.e.,
\begin{align}\label{eq-a-i}
a_i\{f, \, g\} &= \{a_i(f), \; g\} + \{f, \, a_i(g)\},\\
\label{eq-b-i}
b_i\{f, \, g\} &= \{b_i(f), \; g\} + \{f, \, b_i(g)\} + a_i(f) b_i(g) - b_i(f) a_i(g)
\end{align}
for $f, g \in \Cset[z_{i+1}, \ldots, z_n]$. In this case, the Poisson algebra $A$ is also
denoted as
\begin{equation}\label{eq-A}
A = \Cset[z_n]\,[z_{n-1}; \;a_{n-1},\; b_{n-1}]\,\cdots\, [z_2;\; a_2, \;b_2]\,[z_1; \;a_1,\; b_1].
\end{equation}
An iterated Poisson Ore extension as in \eqref{eq-A} is said to be {\it nilpotent} \cite[Definition 4]{Goodearl-Yakimov:PNAS}
if $b_i$ is a locally nilpotent derivation of $\Cset[z_{i+1}, \ldots, z_n]$ for each
$1 \leq i \leq n-1$.
The following Definition \ref{de-Ore} follows \cite[Definition 4]{Goodearl-Yakimov:PNAS} but emphasizes on the 
torus actions.

\begin{definition}\label{de-Ore}
{\em 
Let $A = (\Cset[z_1, \ldots, z_n], \{\,, \, \})$ be a polynomial Poisson algebra and 
$\T$ a complex algebraic torus with Lie algebra $\t$ acting on $A$ rationally \cite{GL} by Poisson algebra automorphisms.
$A$ is said to be an {\it iterated $\T$-Poisson Ore extension (of $\Cset$)} (with respect to the given $\T$-action)
if each $z_i$, $1 \leq i \leq n$, is a weight vector for the $\T$-action with
weight $\lambda_i \in {\rm Hom}(\T, \Cset^\times)$, and if 
\[
A = \Cset[z_n]\,[z_{n-1}; \;a_{n-1},\; b_{n-1}]\,\cdots\, [z_2;\; a_2, \;b_2]\,[z_1; \;a_1,\; b_1]
\]
is an iterated Poisson Ore extension
such that there exist $h_1, \ldots, h_{n-1} \in \t$ satisfying $\lambda_i(h_i) \neq 0$ and
$a_i = h_i|_{\Cset[z_{i+1}, \ldots, z_n]}$ for each $1 \leq i \leq n-1$.
Such an iterated $\T$-Poisson Ore extension  is said to be {\it symmetric}
if 
\[
b_i(z_k) \in \Cset[z_{i+1}, \ldots, z_{k-1}], \hs 1 \leq i < k \leq n,
\]
 and if, there exist
$h_2^\prime, \ldots, h_n^\prime \in \t$ such that $\lambda_i(h_i^\prime) \neq 0$ for $2 \leq i \leq n$ and 
\begin{equation}\label{eq-lam-sym}
\lambda_i(h_k^\prime) = \lambda_k(h_i), \hs \hs 1 \leq i <  k \leq n.
\end{equation} 
Following \cite{GY:Poi} (see Remark \ref{re-nil}), a polynomial Poisson algebra which is a 
symmetric iterated $\T$-Poisson Ore extension for some torus $\T$ is  called a {\it symmetric Poisson CGL extension (of $\Cset$)}.
}
\end{definition}

\begin{remark}\label{re-a-b}
{\em For an iterated $\T$-Poisson Ore extension as in 
Definition \ref{de-Ore}, one has  
\[
\{z_i, \, z_k\} = a_i(z_k)z_i + b_i(z_k) = \lambda_k(h_i) z_iz_k + b_i(z_k) 
\in  \lambda_k(h_i)z_iz_k + \Cset[z_{i+1}, \ldots, z_n]
\]
for all $1 \leq i < k \leq n$, a property referred to as {\it semi-quadratic} in \cite[Definition 4]{Goodearl-Yakimov:PNAS}.
\hfill $\diamond$
}
\end{remark}

\begin{remark}\label{re-h-bi}
{\em 
Let $A$ be an iterated $\T$-Poisson Ore extension as in 
Definition \ref{de-Ore}. Then 
\begin{equation}\label{eq-h-b}
[h|_{\Cset[z_{i+1}, \ldots, z_n]}, \; b_i] = \lambda_i(h) b_i, \hs \hs 1 \leq i \leq n-1, \; \; h \in \t,
\end{equation}
where the left hand side denotes the commutator bracket between the two derivations
$h|_{\Cset[z_{i+1}, \ldots, z_n]}$ and $b_i$ of $\Cset[z_{i+1}, \ldots, z_n]$. 
In fact,  \eqref{eq-h-b} is equivalent to 
\[
[h|_{\Cset[z_{i+1}, \ldots, z_n]}, \; b_i](z_k) = \lambda_i(h) b_i(z_k), \hs 
1\leq i < k \leq n, \;  h \in \t,
\]
 which, by the fact that $z_j$ is a
$\T$-weight vector with weight $\lambda_j$ for each $1 \leq j \leq n$, is in turn equivalent to 
\[
h(\{z_i, z_k\}) = \{h(z_i), z_k\} + \{z_i, h(z_k)\}, \hs h \in \t, \; 1 \leq i < k \leq n,
\]
 which follows from the 
assumption that 
$\T$ acts on $A$ by Poisson automorphisms. In particular, one has 
\[
[a_i, \, b_i] = \lambda_i(h_i) b_i, \hs \hs 1 \leq i \leq n-1.
\]
Let $1 \leq i \leq n-1$ and consider the $2$-dimensional Lie bialgebra $\b_2= \Cset x + \Cset y$ with Lie bracket
$[x, y] = 2y$ and Lie co-bracket $\delta: \b_2 \to \wedge^2 \b_2$ given by $\delta(x) = 0$
and $\delta(y) = -\frac{\lambda_i(h_i)}{2}x \wedge y$.  Consider the Poisson subalgebra $A_{i+1} = \Cset[z_{i+1}, \ldots, z_n]$ 
of $A$ and let ${\rm Der}_{\Cset}(A_{i+1})$ be the Lie algebra of derivations (for the commutative algebra structure)
of $A_{i+1}$. Define the Lie algebra anti-homomorphism $\sigma: \b_2 \to {\rm Der}_{\Cset}(A_{i+1})$ by
\[
\sigma(x) = -\frac{2}{\lambda_i(h_i)}a_i, \hs \sigma(y) = \frac{1}{\lambda_i(h_i)}b_i. 
\]
Then \eqref{eq-a-i} and \eqref{eq-b-i} are equivalent to 
$\sigma$ being a {\it left Poisson action of the Lie bialgebra} $(\b_2, \delta)$ on the Poisson algebra
$A_{i+1}$ (see \cite[$\S$2]{Lu-Mou:mixed}). Let $\b_2^*$ be the dual vector space of $\b_2$ with basis $(x^*, y^*)$ dual  
to the basis $(x, y)$ of $\b_2$. Then the dual Lie bialgebra of $(\b_2, \delta)$ is $\b_2^*$ with  
Lie bracket $[x^*, y^*] = -\frac{\lambda_i(h_i)}{2}y^*$ and Lie co-bracket 
$x^* \mapsto 0$ and $y^* \mapsto 2 x^* \wedge y^*$. 
Let $\rho: \b_2^* \to {\rm Der}_{\Cset} \Cset[z_i]$ be the Lie algebra homomorphism given by 
\[
\rho(x^*) = \frac{\lambda_i(h_i)}{2}z_i \partial /\partial z_i, \hs \rho(y^*) = -\lambda_i(h_i)\partial /\partial z_i.
\]
 Then $\rho$ is a 
{\it right Poisson action of the Lie bialgebra}
$\b_2^*$ on $\Cset[z_i]$ with the trivial Poisson bracket. The Poisson Ore extension $A_i :=
\Cset[z_i, z_{i+1}, \ldots, z_n]$ of $A_{i+1}$ with the Poisson bracket given in \eqref{eq-zi-f} can now be interpreted as
the {\it mixed product Poisson structure} on $A_i = \Cset[z_i] \otimes A_{i+1}$ defined by the pair $(\rho, \sigma)$ of
Poisson actions of Lie bialgebras introduced in \cite{Lu-Mou:mixed}. 
\hfill $\diamond$
}
\end{remark}

\begin{remark}\label{re-nil}
{\em A symmetric iterated $\T$-Poisson Ore extension  is automatically nilpotent. Indeed, let $1 \leq i \leq n-1$
and let the notation be as in Definition \ref{de-Ore}.
To show that $b_i$ is locally nilpotent as a derivation of $\Cset[z_{i+1}, \ldots, z_n]$,
observe first that for integers $m, N \geq 1$ and $f_1, f_2, \ldots, f_m
\in \Cset[z_{i+1}, \ldots, z_n]$, $b_i^N(f_1f_2 \cdots f_m)$ is a linear combination of terms 
of the form $b_i^{N_1}(f_1) b_i^{N_2}(f_1) \cdots b_i^{N_m}(f_m)$ with $N_1 + N_2 + \cdots + N_m = N$. 
Thus $b_i$ is locally nilpotent if for each $i < k \leq n$, $b_i^{N_k} (z_k) = 0$ 
for some integer $N_k \geq 1$. As $b_i(z_{i+1}) \in \Cset$, one has $b_i^2(z_{i+1}) = 0$. Assume that 
there exist $N_j \geq 1$ such that $b_i^{N_j}(z_j) = 0$ for $i+1 \leq j \leq k-1$. As
$b_i(z_k) \in \Cset[z_{i+1}, \ldots, z_{k-1}]$, the above observation shows that
there is an integer $N_k \geq 1$
such that $b_i^{N_k}(z_k) = 0$. Induction on $k$ now shows that $b_i$ is locally nilpotent.
Observe also that if $A$ is a symmetric iterated $\T$-Poisson Ore extension, then for $1 \leq i < k \leq n$,
\begin{equation}\label{eq-zizk-0}
\{z_i, \, z_k\} = \lambda_k(h_i) z_iz_k + b_i(z_k) \in \lambda_k(h_i)z_iz_k + \Cset[z_{i+1}, \ldots, z_{k-1}]
\subset \Cset[z_i, \ldots, z_k].
\end{equation}
Consequently, $\Cset[z_i, \ldots, z_k]$ is a Poisson subalgebra of $A$ for all $1 \leq i < k \leq n$.
\hfill $\diamond$
}
\end{remark}

\begin{lemma}\label{le-reverse}\cite{GY:Poi}
If $A = (\Cset[z_1, \ldots, z_n], \{\,, \, \})$ is a symmetric iterated
$\T$-Poisson Ore extension, then, with respect to the same $\T$-action,
$A$ is a $\T$-Poisson Ore extension in the reversed order of the variables. More precisely,
in the notation of Definition \ref{de-Ore}, for each $2 \leq k \leq n$,
$\Cset[z_1, \ldots, z_{k-1}]$ is a 
Poisson subalgebra of $A$, and  
\begin{equation}\label{eq-f-z}
\{f, \, z_k\} = a_k^\prime(f)z_k + b_k^\prime(f), \hs f \in \Cset[z_1, \ldots, z_{k-1}],
\end{equation}
where $a_k^\prime = h_k^\prime|_{\Cset[z_1, \ldots, z_{k-1}]}$
as a derivation of $\Cset[z_1, \ldots, z_{k-1}]$ and $b_k^\prime$ is the unique derivation of
$\Cset[z_1, \ldots, z_{k-1}]$ such that $b_k^\prime(z_i) = b_i(z_k) \in \Cset[z_{i+1}, \ldots, z_{k-1}]$ for
$1 \leq i \leq k-1$. Moreover, 
for any $h \in \t$, $[h|_{\Cset[z_1, \ldots, z_{k-1}]}, \, b_k^\prime] = \lambda_k(h) b_k^\prime$
as derivations of $\Cset[z_1, \ldots, z_{k-1}]$.
\end{lemma}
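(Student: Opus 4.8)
The plan is to reduce everything to a verification on the generators $z_1, \ldots, z_n$, exploiting the two structural inputs that a symmetric iterated $\T$-Poisson Ore extension carries: the closure property $\{z_i, z_k\} \in \Cset[z_i, \ldots, z_k]$ from \eqref{eq-zizk-0} in Remark \ref{re-nil}, and the weight symmetry $\lambda_i(h_k') = \lambda_k(h_i)$ from Definition \ref{de-Ore}.

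First I would check that $\Cset[z_1, \ldots, z_{k-1}]$ is a Poisson subalgebra of $A$ for each $2 \leq k \leq n$. For $1 \leq i < j \leq k-1$, \eqref{eq-zizk-0} gives $\{z_i, z_j\} \in \Cset[z_i, \ldots, z_j] \subseteq \Cset[z_1, \ldots, z_{k-1}]$ (and $\{z_i, z_i\} = 0$), so by the Leibniz rule $\{f, g\} = \sum_{i, j \leq k-1} \frac{\partial f}{\partial z_i}\frac{\partial g}{\partial z_j}\{z_i, z_j\} \in \Cset[z_1, \ldots, z_{k-1}]$ for all $f, g \in \Cset[z_1, \ldots, z_{k-1}]$. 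The same argument shows that $\Cset[z_1, \ldots, z_k]$ is a Poisson subalgebra, which I will regard as the polynomial ring $\Cset[z_1, \ldots, z_{k-1}][z_k]$ over $\Cset[z_1, \ldots, z_{k-1}]$, so that ``coefficient of $z_k$'' and ``$z_k$-free part'' make sense.

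The core step is the bracket formula. Fix $k$, let $b_k'$ be the unique derivation of $\Cset[z_1, \ldots, z_{k-1}]$ with $b_k'(z_i) = b_i(z_k)$ for $1 \leq i \leq k-1$ — well-defined because each $b_i(z_k) \in \Cset[z_{i+1}, \ldots, z_{k-1}]$ by symmetry — and put $a_k' = h_k'|_{\Cset[z_1,\ldots,z_{k-1}]}$. Then $D_1 \colon f \mapsto \{f, z_k\}$ and $D_2 \colon f \mapsto a_k'(f)z_k + b_k'(f)$ are both derivations from $\Cset[z_1, \ldots, z_{k-1}]$ into the module $\Cset[z_1, \ldots, z_k]$ (for $D_1$ this uses that $\Cset[z_1, \ldots, z_k]$ is a Poisson subalgebra; for $D_2$ it is immediate from the Leibniz rule). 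On the generator $z_i$, using $h_k'(z_i) = \lambda_i(h_k')z_i$, the weight symmetry $\lambda_i(h_k') = \lambda_k(h_i)$, and Remark \ref{re-a-b},
\[
D_2(z_i) = \lambda_i(h_k')z_iz_k + b_i(z_k) = \lambda_k(h_i)z_iz_k + b_i(z_k) = \{z_i, z_k\} = D_1(z_i).
\]
Since two derivations of a polynomial algebra agreeing on the generators coincide, $D_1 = D_2$, i.e.\ $\{f, z_k\} = a_k'(f)z_k + b_k'(f)$; this is precisely the iteration condition in the reversed order $z_n, z_{n-1}, \ldots, z_1$. The non-degeneracy $\lambda_k(h_k') \neq 0$ and the identity $a_k' = h_k'|_{\Cset[z_1,\ldots,z_{k-1}]}$ are part of Definition \ref{de-Ore}, and the Poisson-derivation property of $a_k'$ together with the $a_k'$-Poisson-derivation property of $b_k'$ then follow from the general theory of Poisson polynomial extensions recalled around \eqref{eq-a-i}--\eqref{eq-b-i}, so $A$ is an iterated $\T$-Poisson Ore extension in the reversed order with the same $\T$-action.

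Finally, for the commutator relation $[h|_{\Cset[z_1,\ldots,z_{k-1}]}, b_k'] = \lambda_k(h) b_k'$, both sides are derivations of $\Cset[z_1, \ldots, z_{k-1}]$, so I compare them on $z_i$. Applying \eqref{eq-h-b} of Remark \ref{re-h-bi} to $b_i$ at $z_k$, which has weight $\lambda_k$, gives $h(b_i(z_k)) = (\lambda_i(h) + \lambda_k(h))b_i(z_k)$, whence
\[
[h, b_k'](z_i) = h\big(b_i(z_k)\big) - \lambda_i(h)b_i(z_k) = \lambda_k(h)b_i(z_k) = \lambda_k(h)b_k'(z_i),
\]
as required. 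The only point needing care is the bookkeeping of working inside $\Cset[z_1,\ldots,z_{k-1}][z_k]$ and tracking which elements of $\t$ pair against which weights; once the hypotheses \eqref{eq-zizk-0} and $\lambda_i(h_k') = \lambda_k(h_i)$ are in hand the lemma is purely formal, with no analytic or combinatorial content.
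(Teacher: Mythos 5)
Your proposal is correct and follows essentially the same route as the paper's own proof: establish the Poisson-subalgebra claim from \eqref{eq-zizk-0}, verify \eqref{eq-f-z} on the generators $z_i$ using the weight symmetry $\lambda_i(h_k^\prime)=\lambda_k(h_i)$ and extend by the derivation property, and derive the commutator relation by applying \eqref{eq-h-b} to $b_i$ at the weight vector $z_k$. The only difference is that you spell out the generators-suffice step and the Leibniz bookkeeping slightly more explicitly than the paper does.
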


\begin{proof}
It follows from \eqref{eq-zizk-0} that $\Cset[z_1, \ldots, z_{k-1}]$ is a 
Poisson subalgebra of $A$ for every $2 \leq k \leq n$. The assumption that $\lambda_i(h_k^\prime) = \lambda_k(h_i)$
for all $1 \leq i < k \leq n$ and the definition of the $b_k^\prime$'s imply that \eqref{eq-f-z} holds
for $f = z_i$ for each $i < k$, so it holds for all $f \in \Cset[z_1, \ldots, z_{k-1}]$. 
Let $h \in \t$ and $2 \leq k \leq n$. Then for each $1 \leq i \leq k-1$, using 
\eqref{eq-h-b}, one has $h(b_i(z_k)) - b_i(h(z_k)) = \lambda_i(h) b_i(z_k)$, from which one has
\[
h(b_i(z_k)) - \lambda_i(h) b_i(z_k) = b_i(h(z_k)) = \lambda_k(h) b_i(z_k),
\]
 and it follows that
\[
h(b_k^\prime(z_i)) - b_k^\prime(h(z_i)) = 
h(b_i(z_k))-\lambda_i(h) b_i(z_k) = \lambda_k(h) b_i(z_k) = \lambda_k(h) b_k^\prime(z_i).
\]
This proves that 
$[h|_{\Cset[z_1, \ldots, z_{k-1}]}, \, b_k^\prime] = \lambda_k(h) b_k^\prime$
as derivations of $\Cset[z_1, \ldots, z_{k-1}]$.
\end{proof}

\begin{notation}\label{note-reverse}
{\em 
In the context of Lemma \ref{le-reverse}, we also write
\begin{equation}\label{eq-reverse}
A = \Cset[z_1]\,[z_2; \;a_2^\prime, \;b_2^\prime]\,\cdots \,[z_{n-1}; \;a_{n-1}^\prime, \;b_{n-1}^\prime]\,
[z_n; \;a_n^\prime, \;b_n^\prime].
\end{equation}
}
\end{notation}

\bigskip
Returning to the Bott-Samelson variety $\Zu$ with the Poisson structure $\pi_n$, where $\bfu = (s_1, \ldots, s_n) = (s_{\alpha_1}, \ldots, s_{\alpha_n})$,
choose again any set $\{e_\alpha: \alpha \in \Gamma\}$ of root vectors for the simple roots, so that one has 
coordinates $(z_1, \ldots, z_n)$ on $\O^\gamma$ for each $\gamma \in \Upu$. Fix $\gamma \in \Upu$ and consider
the Poisson polynomial algebra
$(\Cset[z_1, \ldots, z_n], \{\, , \, \}_\gamma)$. Recall again that the maximal torus $T$ acts on  
$\O^\gamma$ by \eqref{hvectorfield}, which gives rise to 
a rational action of $T$ on $(\Cset[z_1, \ldots, z_n], \{\, , \, \}_\gamma)$ by Poisson automorphisms. More precisely,
\begin{equation}\label{eq-T-z}
t \cdot z_i = t^{-\gamma^i(\alpha_i)} z_i, \hs 1 \leq i \leq n.
\end{equation}
For $h \in \h = {\rm Lie}(T)$, denote by $\partial_h$ the Poisson derivation of
$(\Cset[z_1, \ldots, z_n], \{\, , \, \}_\gamma)$ generating the $T$-action in the direction of $h$, i.e,
\begin{equation}\label{eq-x-z}
\partial_h(z_i) = -\gamma^i(\alpha_i)(h)z_i, \hs 1 \leq i \leq n, \; h \in \h.
\end{equation}
Note that both the $T$-action and the derivations $\partial_h$, $h \in \h$, on $\Cset[z_1, \ldots, z_n]$
depend on $\gamma$,  but for notational simplicity we do not include the dependence on $\gamma$ in the notation.
For $1 \leq i \leq n-1$, recall also  the vector field $\sigma_i$ on
the Bott-Samelson variety $Z_{(s_{i+1}, \ldots, s_n)}$ defined in \eqref{eq-Vi}, and recall that the induced derivation on
$\Cset[z_{i+1}, \ldots, z_n]$, identified with the algebra of regular functions on $\O^{(s_{i+1}, \ldots, s_n)}
\subset  Z_{(s_{i+1}, \ldots, s_n)}$ is also denoted by $\sigma_i$.

\begin{theorem}\label{thm-Ore}
For each $\gamma \in \Upu$, $(\Cset[z_1, \ldots, z_n], \{\, , \, \}_\gamma)$
is an iterated $T$-Poisson Ore extension of $\Cset$ with respect to the $T$-action on 
given in \eqref{eq-T-z}. More explicitly,
\begin{equation}\label{eq-pi-gamma-Ore}
(\Cset[z_1, \ldots, z_n], \{\, , \, \}_\gamma) = 
\Cset[z_n]\,[z_{n-1}; \;a_{n-1},\; b_{n-1}]\,\cdots \,[z_2;\; a_2, \;b_2]\,[z_1; \;a_1,\; b_1],
\end{equation}
where for $1 \leq i \leq n-1$, 
\begin{equation}\label{eq-pi-gamma-Ore-1}
a_i = -\frac{\la \alpha_i, \alpha_i\ra}{2} \partial_{\gamma^{i-1}(h_{\alpha_i})}|_{\Cset[z_{i+1}, \ldots, z_n]},
\hs
b_i =  \begin{cases}0, & \;\;\; \mbox{if} \;\; \gamma_i = e,\\
- \la \alpha_i, \alpha_i\ra\sigma_i, 
& \;\;\; \mbox{if} \;\; \gamma_i = s_i.
\end{cases}
\end{equation}
When $\gamma = \bfu$, the extension is symmetric. More explicitly, for $\gamma = \bfu$, one also has
\begin{equation}\label{eq-A-u}
A = \Cset[z_1]\,[z_2; \;a_2^\prime, \;b_2^\prime]\,\cdots \,[z_{n-1}; \;a_{n-1}^\prime, \;b_{n-1}^\prime]\,
[z_n; \;a_n^\prime, \;b_n^\prime],
\end{equation}
where for $2 \leq k \leq n$, $a_k^\prime = -\frac{\la \alpha_k, \alpha_k\ra}{2} \partial_{\gamma^{k-1}(h_{\alpha_k})}|_{\Cset[z_{1}, \ldots, z_{k-1}]}$, and $b_k^\prime$ is the unique derivation
of $\Cset[z_{1}, \ldots, z_{k-1}]$ such that $b_k^\prime(z_i) = -\la \alpha_i, \alpha_i\ra \sigma_i(z_k)$ for 
$1 \leq i \leq k-1$.
\end{theorem}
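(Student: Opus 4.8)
The plan is to derive Theorem~\ref{thm-Ore} as a formal consequence of Theorem~\ref{th-zizj}, the description \eqref{hvectorfield} of the $T$-action, the automatic-derivation statement of \cite{Oh} recalled in $\S$\ref{subsec-Poi-Ore}, and Lemma~\ref{le-reverse}. Fix a set $\{e_\alpha:\alpha\in\Gamma\}$ of root vectors and extend it to a basis of $\g$ as in $\S$\ref{subsec-coor-II}; by Proposition~\ref{pr-indep} the bracket $\{\,,\,\}_\gamma$ on $\Cset[z_1,\ldots,z_n]$ does not depend on these choices. The first step is to rewrite Theorem~\ref{th-zizj} as: for $1\le i<k\le n$, one has $\{z_i,z_k\}_\gamma = c_{i,k}\,z_iz_k + r_{i,k}$, where $c_{i,k}=\langle\gamma^{i-1}(\alpha_i),\gamma^k(\alpha_k)\rangle\in\Cset$ (using $\gamma^i=\gamma^{i-1}\gamma_i$ and $s_i(\alpha_i)=-\alpha_i$, so that $\gamma^{i-1}(\alpha_i)=\pm\gamma^i(\alpha_i)$ according as $\gamma_i=e$ or $\gamma_i=s_i$), and $r_{i,k}$ equals $0$ if $\gamma_i=e$ and $-\langle\alpha_i,\alpha_i\rangle\,\sigma_i(z_k)$ if $\gamma_i=s_i$; in all cases $r_{i,k}\in\Cset[z_{i+1},\ldots,z_k]$ by \eqref{eq-Vi-zk} and does not involve $z_i$. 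Hence $\{z_i,\Cset[z_{i+1},\ldots,z_n]\}_\gamma\subset z_i\Cset[z_{i+1},\ldots,z_n]+\Cset[z_{i+1},\ldots,z_n]$ by the Leibniz rule, and each $\Cset[z_{i+1},\ldots,z_n]$ is a Poisson subalgebra since $\{z_j,z_k\}_\gamma\in\Cset[z_j,\ldots,z_k]$. Thus $A=(\Cset[z_1,\ldots,z_n],\{\,,\,\}_\gamma)$ is an iterated Poisson Ore extension; by \cite{Oh} the derivations $a_i,b_i$ determined by \eqref{eq-zi-f} are Poisson (resp.\ $a_i$-Poisson) derivations, and comparing the two decompositions of $\{z_i,z_k\}_\gamma$ gives $b_i(z_k)=r_{i,k}$ and $a_i(z_k)=c_{i,k}z_k$, which is the formula for $b_i$ in \eqref{eq-pi-gamma-Ore-1}.

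The second step identifies $a_i$ with a torus-generated derivation. Using the identity $\lambda(h_\alpha)=2\langle\lambda,\alpha\rangle/\langle\alpha,\alpha\rangle$ and the Weyl-invariance of $\lara$, the derivation $\partial_h$ of \eqref{eq-x-z} attached to $h=\gamma^{i-1}(h_{\alpha_i})\in\h$ satisfies $\partial_h(z_k)=-\gamma^k(\alpha_k)\!\left(\gamma^{i-1}(h_{\alpha_i})\right)z_k=-\frac{2}{\langle\alpha_i,\alpha_i\rangle}\langle\gamma^k(\alpha_k),\gamma^{i-1}(\alpha_i)\rangle\,z_k$, so $-\frac{\langle\alpha_i,\alpha_i\rangle}{2}\,\partial_{\gamma^{i-1}(h_{\alpha_i})}(z_k)=c_{i,k}z_k=a_i(z_k)$ for every $k>i$; since two derivations of $\Cset[z_{i+1},\ldots,z_n]$ agreeing on the generators coincide, this gives the formula for $a_i$ in \eqref{eq-pi-gamma-Ore-1}. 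Setting $h_i=-\frac{\langle\alpha_i,\alpha_i\rangle}{2}\gamma^{i-1}(h_{\alpha_i})\in\h$, the same computation with $k=i$ yields $\lambda_i(h_i)=\langle\gamma^i(\alpha_i),\gamma^{i-1}(\alpha_i)\rangle=\pm\langle\alpha_i,\alpha_i\rangle\ne0$, where by \eqref{hvectorfield} $z_i$ is a $T$-weight vector of weight $\lambda_i=-\gamma^i(\alpha_i)$ and $a_i=\partial_{h_i}|_{\Cset[z_{i+1},\ldots,z_n]}$ gives $\lambda_k(h_i)=c_{i,k}$ for $i<k$. Since $T$ acts on $(\Zu,\pi_n)$ by Poisson isomorphisms ($\S$\ref{piBS}), the conditions of Definition~\ref{de-Ore} hold and $A$ is an iterated $T$-Poisson Ore extension of the form \eqref{eq-pi-gamma-Ore}.

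The final step is symmetry when $\gamma=\bfu$. Here $\gamma_i=s_i$ for all $i$, so by the last assertion of Theorem~\ref{th-zizj} one has $\sigma_i(z_k)\in\Cset[z_{i+1},\ldots,z_{k-1}]$, hence $b_i(z_k)=-\langle\alpha_i,\alpha_i\rangle\sigma_i(z_k)\in\Cset[z_{i+1},\ldots,z_{k-1}]$. For the remaining condition in Definition~\ref{de-Ore} I would take $h_k'=-\frac{\langle\alpha_k,\alpha_k\rangle}{2}\gamma^{k-1}(h_{\alpha_k})\in\h$ for $2\le k\le n$, i.e.\ the same recipe as for $h_k$. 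Running the computation of the previous paragraph with the roles of $i$ and $k$ interchanged, and using $\gamma^{k-1}(\alpha_k)=-\gamma^k(\alpha_k)$ (valid since $\gamma=\bfu$), gives $\lambda_i(h_k')=-\langle\gamma^i(\alpha_i),\gamma^k(\alpha_k)\rangle=c_{i,k}=\lambda_k(h_i)$ for all $i<k$, together with $\lambda_k(h_k')=-\langle\alpha_k,\alpha_k\rangle\ne0$; the essential observation is simply that $c_{i,k}=-\langle\gamma^i(\alpha_i),\gamma^k(\alpha_k)\rangle$ is symmetric in $i$ and $k$ because $\lara$ is. Thus $A$ is a symmetric iterated $T$-Poisson Ore extension, and Lemma~\ref{le-reverse} then yields the reverse presentation \eqref{eq-A-u} with $a_k'=\partial_{h_k'}|_{\Cset[z_1,\ldots,z_{k-1}]}=-\frac{\langle\alpha_k,\alpha_k\rangle}{2}\partial_{\gamma^{k-1}(h_{\alpha_k})}|_{\Cset[z_1,\ldots,z_{k-1}]}$ and $b_k'$ the unique derivation of $\Cset[z_1,\ldots,z_{k-1}]$ with $b_k'(z_i)=b_i(z_k)=-\langle\alpha_i,\alpha_i\rangle\sigma_i(z_k)$, as stated. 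I do not expect a genuine obstacle: all the analytic content sits in Theorems~\ref{epsilonbeta} and \ref{th-zizj}, and what is left is bookkeeping with Weyl-group actions on $\lara$; the only place to be careful is keeping the sign conventions ($\gamma^i=\gamma^{i-1}\gamma_i$ and $s_i(\alpha_i)=-\alpha_i$) straight and checking the derivation identities on the generators $z_{i+1},\ldots,z_n$ rather than on all of $\Cset[z_{i+1},\ldots,z_n]$.
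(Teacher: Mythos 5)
Your proposal is correct and follows essentially the same route as the paper: unify the two cases of Theorem \ref{th-zizj} via $\gamma^{i-1}(\alpha_i)=\pm\gamma^i(\alpha_i)$, identify $a_i$ with $-\frac{\la\alpha_i,\alpha_i\ra}{2}\partial_{\gamma^{i-1}(h_{\alpha_i})}$ using Weyl-invariance of $\lara$, check $\lambda_i(h_i)=\pm\la\alpha_i,\alpha_i\ra\neq 0$, take $h_k'=h_k$ for the symmetry condition when $\gamma=\bfu$, and conclude \eqref{eq-A-u} from Lemma \ref{le-reverse}. The only difference is that you spell out the verification of the Ore containment \eqref{eq-zzz-0} via the Leibniz rule, which the paper leaves implicit.
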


\begin{proof} Let $\gamma = (\gamma_1, \ldots, \gamma_n) \in \Upu$ and let
$\lambda_i = -\gamma^i(\alpha_i)$ for $1 \leq i \leq n$.
By \eqref{eq-T-z}, $z_i$ is a weight vector for the $T$-action on $\Cset[z_1, \ldots, z_n]$ 
with weight $\lambda_i$.
For $1 \leq i \leq n$, define $h_i \in \h = {\rm Lie}(T)$ by
\begin{equation}\label{eq-hi}
h_i = -\frac{\la \alpha_i, \alpha_i\ra}{2} \gamma^{i-1}(h_{\alpha_i}) =\displaystyle 
\begin{cases}-\frac{\la \alpha_i, \alpha_i\ra}{2}\gamma^{i}(h_{\alpha_i}), & \;\;\; \mbox{if} \;\;\; \gamma_i =e,
\vspace{.1in}
\\
\frac{\la \alpha_i, \alpha_i\ra}{2}\gamma^{i}(h_{\alpha_i}), & \;\;\; \mbox{if} \;\;\; \gamma_i =s_i.\end{cases}
\end{equation}
Then for $1 \leq i < k \leq n$, 
\[
\partial_{h_i}(z_k) =\lambda_k(h_i) z_k = -\gamma^k(\alpha_k)(h_i) z_k = \la \gamma^{i-1}(\alpha_i), \, \gamma^k(\alpha_k)\ra z_k.
\]
It now follows from Theorem \ref{th-zizj} that \eqref{eq-pi-gamma-Ore} holds with the $a_i$'s and $b_i$'s given by
\eqref{eq-pi-gamma-Ore-1}. Moreover, for each $1 \leq i \leq n$, $\lambda_i(h_i) \neq 0$, as 
\begin{equation}\label{eq-lambda-h-i}
\lambda_i(h_i) = \la \gamma^{i-1}(\alpha_i), \, \gamma^{i}(\alpha_i)\ra = \la \alpha_i, \, \gamma_i(\alpha_i)\ra
=\begin{cases} \la \alpha_i, \, \alpha_i\ra, & \;\; \gamma_i = e,\\
-\la \alpha_i, \, \alpha_i\ra, & \;\; \gamma_i = s_i.\end{cases}
\end{equation}
Thus $(\Cset[z_1, \ldots, z_n], \{\, , \, \}_\gamma)$
is an iterated $T$-Poisson Ore extension of $\Cset$.

Assume now that $\gamma = \bfu$ is the full subexpression of $\bfu$. In this case, let
\[
h_i = -\frac{\la \alpha_i, \, \alpha_i\ra}{2} \gamma^{i-1}(h_{\alpha_i}) = -\frac{\la \alpha_i, \, \alpha_i\ra}{2}
s_1s_2 \cdots s_{i-1}(h_{\alpha_i}) \in \h, \hs 1 \leq i \leq n,
\]
and let $h_k^\prime = h_k$ for $2 \leq k \leq n$.
With $\lambda_i = s_1s_2 \cdots s_{i-1}(\alpha_i)$, one has, for $1 \leq i < k \leq n$,
\[
\lambda_i(h_k^\prime) =-\la \gamma^i(\alpha_i), \; \gamma^k(\alpha_k)\ra =
-\la s_1s_2 \cdots s_{i-1}(\alpha_i), \; s_1s_2 \cdots s_{k-1}(\alpha_k)\ra = \lambda_k(h_i).
\]
By Theorem \ref{th-zizj}, one also has $b_i(z_k) \in \Cset[z_{i+1}, \ldots, z_{k-1}]$ for $1 \leq i < k \leq n$. 
This shows that $(\Cset[z_1, \ldots, z_n], \{\, , \, \}_\bfu)$, as an
iterated $T$-Poisson Ore extension of $\Cset$ with respect to the $T$-action
given in \eqref{eq-T-z}, is symmetric.  By Lemma \ref{le-reverse}, \eqref{eq-A-u} holds.
\end{proof}

\begin{remark}\label{remark-h-b}
{\em We already know from Remark \ref{re-h-bi} that
for $h \in \t$ and $1 \leq i \leq n-1$,  the 
two derivations $a_h :=\partial_h|_{\Cset[z_{i+1}, \ldots, z_n]}$ and $b_i$
on $\Cset[z_{i+1}, \ldots, z_n]$ in Theorem \ref{thm-Ore} satisfy $[a_h, b_i] = \lambda_i(h)b_i$.
This can also be checked directly: it clearly holds when $\gamma_i = e$.  Assume that $\gamma_i = s_i$.
In the notation of \eqref{eq-sigma-k} and by Lemma \ref{le-Z-Pi-1}, one has
$a_h = \sigma^{(i+1)}_{(\gamma^i)^{-1}(h)}$ and $b_i = -\la \alpha_i, \alpha_i\ra \sigma^{(i+1)}_{e_{\alpha_i}}$. 
Thus
\[
[a_h, \, b_i] = -\la \alpha_i, \alpha_i\ra 
\left[\sigma^{(i+1)}_{(\gamma^i)^{-1}(h)}, \, \sigma^{(i+1)}_{e_{\alpha_i}}\right]
=  \la \alpha_i, \alpha_i\ra\sigma^{(i+1)}_{[(\gamma^i)^{-1}(h), e_{\alpha_i}]}
=\lambda_i(h)b_i.
\]
\hfill $\diamond$
}
\end{remark}

\begin{remark}\label{not-nil}
{\em 
For an arbitrary $\gamma \in \Upu$, $(\Cset[z_1, \ldots, z_n], \{\, ,\, \}_\gamma)$ 
expressed as an iterated $T$-Poisson Ore extension as in \eqref{eq-pi-gamma-Ore}
is not necessarily a Poisson CGL extension in the sense of \cite{GY:Poi}, as the definition in \cite{GY:Poi} 
requires the derivations $b_i$ be locally nilpotent. In Example \ref{ex-1} for $\gamma = (s_{\alpha_1}, e, e)$,
the derivation $b_1$ on 
$\Cset[z_2, z_3]$ is given by $b_1(z_2) = 0$ and $b_1(z_3) = 2z_3^2$ which is not locally nilpotent. 
\hfill $\diamond$
}
\end{remark}

\subsection{The Poisson structure $\pi_n$ in $\O^\bfu$}\label{subsec-u}
We now look in more detail at the Poisson polynomial algebra 
$(\Cset[z_1, \ldots, z_n], \{\, , \, \}_\bfu)$.
In this case, $T$ acts on 
$\Cset[z_1, \ldots, z_n]$  by
\begin{equation}\label{eq-T-z-u}
t \cdot z_i = t^{s_1s_2 \cdots s_{i-1}(\alpha_i)} z_i, \hs t \in T, \; 1 \leq i \leq n,
\end{equation}
and the Poisson structure $\{\, , \, \}_\bfu$ on $\Cset[z_1, \ldots, z_n]$ is given by 
\begin{equation}\label{eq-zizk-u}
\{z_i, z_k\}_\bfu = c_{i,k}z_iz_k -\la \alpha_i, \alpha_i\ra \sigma_i(z_k) =c_{i,k}z_iz_k + b_k^\prime(z_i) , \hs 1 \leq i < k \leq n,
\end{equation}
where for $1 \leq i, k \leq n$, 
\begin{equation}\label{eq-cik}
c_{i,k} = -\la \gamma^i(\alpha_i), \, \gamma^k(\alpha_k)\ra = -\la s_1s_2 \cdots s_{i-1}(\alpha_i), \; s_1s_2 \ldots s_{k-1}(\alpha_k)\ra,
\end{equation}
$\sigma_i$ is the derivation on $\Cset[z_{i+1}, \ldots,z_{k-1}]$ corresponding to the vector field
on the Bott-Samelson variety $Z_{(s_{i+1}, \ldots, s_n)}$ generating the $B$-action 
in the direction of $e_{\alpha_i}$ (see \eqref{eq-Vi}),
and $b_k^\prime$ is the unique derivation on $\Cset[z_1, \ldots, z_{k-1}]$ such that
$\b_k^\prime (z_i) = -\la \alpha_i, \alpha_i\ra \sigma_i(z_k)$.

We now give the geometric meaning of the derivation $b_k^\prime$ on $\Cset[z_1, \ldots, z_{k-1}]$.
To this end, consider the quotient manifold
\[
F_{-n}^\prime = B_-\backslash G \times_{B_-} G \times \cdots \times_{B_-} G
\]
of $G^n$ by $(B_-)^n$, where 
$(B_-)^n$ acts on  $G^n$ from the left by
\begin{equation}\label{eq-BP-left}
(b_1, b_2, \ldots, b_n) \cdot (g_1, g_2, \ldots, g_n)  =(b_1g_1b_2^{-1}, \, b_2g_2b_3^{-1}, \ldots, b_ng_n), \hs b_j \in B_-, \, g_j \in G. 
\end{equation}
Let $\rho_-: G^n \to F_{-n}^\prime$
be the natural projection.
Similar to the case of the quotient manifold $F_n$ in \eqref{eq-Fn}, the product Poisson structure $\pi_{\rm st}^n$ on $G^n$
projects by $\rho_-$ to a well-defined Poisson structure on $F_{-n}^\prime$, which will be denoted by $\pi_{-n}^\prime$.
Let $P_{-s_i} = B_- \cup B_-s_i B_-$ for $1 \leq i \leq n$. 
As each $P_{-s_i}$ is a Poisson submanifold of $(G, \pist)$, 
the closed submanifold
\[
Z_{-\bfu}^\prime = B_-\backslash P_{-s_1} \times_{B_-} P_{-s_2} \times \cdots \times_{B_-} P_{-s_n}
\]
of $F_{-n}^\prime$ is a Poisson submanifold with respect to $\pi_{-n}^\prime$. 
We will also call $Z_{-\bfu}^\prime$ a Bott-Samelson variety.
Note that for each $1 \leq i \leq n$, one has
\[
u_{\alpha_i}(z) \dot{s}_i = \dot{s}_i u_{-\alpha_i}(-z), \hs z \in \Cset.
\]
Setting $\rho_-(g_1, g_2, \ldots, g_n) = [g_1, g_2, \ldots, g_n]_- \in F_{-n}^\prime$ for $(g_1, g_2, \ldots, g_n) \in G^n$,
it follows that one has the open affine chart 
\[
\O^{\prime, \bfu}_-:= B_-\backslash (B_-s_1 B_-) \times_{B_-} (B_-s_2 B_-) \times \cdots \times_{B_-} (B_-s_n B_-)
\]
of $Z_{-\bfu}^\prime$, with the parametrization by $\Cset^n$ via
\begin{equation}\label{eq-coor-prime}
\Cset^n \ni (z_1, z_2, \ldots, z_n) \longmapsto 
[u_{\alpha_1} (z_1)\dot{s}_{\alpha_1}, \; u_{\alpha_2} (z_2)\dot{s}_{\alpha_2}, \;\ldots, \; 
u_{\alpha_n} (z_n)\dot{s}_{\alpha_n}]_- \in \O^{\prime, \bfu}_-.
\end{equation}
The restriction of the Poisson structure $\pi_{-n}^\prime$ to $\O^{\prime, \bfu}_-$ will also be denoted by
$\pi_{-n}^\prime$.

\begin{proposition}\label{pr-dual-pair}
The map $I: (\O^{\bfu}, \, \pi_n) \to (\O^{\prime, \bfu}_-, \; \pi_{-n}^\prime)$ given by
\begin{align*}
[u_{\alpha_1} (z_1)\dot{s}_{\alpha_1}, \; u_{\alpha_2} (z_2)\dot{s}_{\alpha_2}, \;\ldots, &\; 
u_{\alpha_n} (z_n)\dot{s}_{\alpha_n}] \\
&\hs \hs\longmapsto 
[u_{\alpha_1} (z_1)\dot{s}_{\alpha_1}, \; u_{\alpha_2} (z_2)\dot{s}_{\alpha_2}, \;\ldots, \; 
u_{\alpha_n} (z_n)\dot{s}_{\alpha_n}]_-, 
\end{align*}
where $(z_1, z_2, \ldots, z_n) \in \Cset^n$, 
is a Poisson anti-isomorphism.
\end{proposition}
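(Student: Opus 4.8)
The plan is to reduce everything to an explicit computation in coordinates. The parametrization of $\O^{\prime,\bfu}_-$ in \eqref{eq-coor-prime} and the parametrization $\Phi^\bfu$ of $\O^\bfu$ are given by literally the same group-theoretic formula, so in the respective coordinates $(z_1,\ldots,z_n)$ the map $I$ is simply the identity map of $\Cset^n$. It therefore suffices to show that the Poisson bracket $\{\,,\,\}'$ that $\pi_{-n}^\prime$ induces on $\Cset[z_1,\ldots,z_n]$ through \eqref{eq-coor-prime} satisfies $\{z_i,z_k\}' = -\{z_i,z_k\}_\bfu$ for all $1\le i<k\le n$, with $\{\,,\,\}_\bfu$ as computed in Theorem~\ref{th-intro} and \eqref{eq-zizk-u}. (A purely ``global map'' argument via an anti-Poisson automorphism such as the Chevalley involution, which sends $B\mapsto B_-$ and $P_{s_i}\mapsto P_{-s_i}$, forces the word $\bfu$ to be reversed, so I would not pursue it.)

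To compute $\{\,,\,\}'$, I would present $(Z_{-\bfu}^\prime,\pi_{-n}^\prime)$ as an iterated $\mathbb P^1$-extension built by peeling off the \emph{last} factor at each stage: $Z_{-(s_1,\ldots,s_k)}^\prime\cong Z_{-(s_1,\ldots,s_{k-1})}^\prime\times_{B_-}P_{-s_k}$, with Poisson structure the projection of $\pi_{-(k-1)}^\prime\times\pist$, where $Z_{-(s_1,\ldots,s_{k-1})}^\prime$ carries the right Poisson $(B_-,\pist)$-action coming from right translation on the last factor and $P_{-s_k}$ the left $(B_-,\pist)$-action by left translation. The first step is the left-handed analogue of Lemma~\ref{le-Z-Pi-1}: for $(Y,\piY)$ with a right Poisson $(B_-,\pist)$-action and $\alpha$ simple, the Poisson structure on $Y\times_{B_-}P_{-s_\alpha}$ in the chart $(z,y)\mapsto[y,\,u_\alpha(z)\ds_\alpha]_-$ has the shape of \eqref{eq-Pi-zy-2} up to a fixed overall sign, with $\eta_x$ now the vector field generating the right $B_-$-action in the direction of $x$; this comes out of \eqref{eq-lpi-1}--\eqref{eq-lpi} applied on the right together with $u_\alpha(z)\ds_\alpha=\ds_\alpha u_{-\alpha}(-z)$. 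Iterating as in the proof of Lemma~\ref{le-zizj-1} expresses $\{z_i,z_k\}'$ in terms of the pairing $\la s_1\cdots s_{i-1}(\alpha_i),\,s_1\cdots s_{k-1}(\alpha_k)\ra$ and a vector field $\sigma_k'$ on $Z_{-(s_1,\ldots,s_{k-1})}^\prime$ generating a right $B_-$-action in the direction of $e_{\alpha_k}$, applied to $z_i$. A left-handed version of Theorem~\ref{epsilonbeta} — in which the same root strings and structure constants appear, because the inversion $u_\alpha(z)\ds_\alpha=\ds_\alpha u_{-\alpha}(-z)$ converts the right $B_-$-picture into the one already computed — then identifies $\sigma_k'(z_i)$ with $\sigma_i(z_k)$, i.e.\ with the derivation $b_k^\prime$ of Lemma~\ref{le-reverse} and Theorem~\ref{thm-Ore} up to the overall sign, and comparison with \eqref{eq-zizk-u} gives $\{z_i,z_k\}'=-\{z_i,z_k\}_\bfu$.

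The main obstacle is keeping honest track of that overall sign. Since $\Zu$ and $Z_{-\bfu}^\prime$ are mirror images of one another (right versus left quotients, first versus last factor peeled off, $B$ versus $B_-$), a sign can potentially enter at two places — passing from left to right translation in the $\mathbb P^1$-extension lemma, and in the inversion identity $u_\alpha(z)\ds_\alpha=\ds_\alpha u_{-\alpha}(-z)$ used to reuse Theorem~\ref{epsilonbeta} — and one must check that the net effect is exactly one sign, so that $\pi_{-n}^\prime$ in the coordinates \eqref{eq-coor-prime} is the \emph{negative}, not a positive multiple, of $\pi_n$ in the Bott--Samelson coordinates on $\O^\bfu$. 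This is the conceptual payoff of the proposition: it realizes the derivations $b_k^\prime$ of the reversed iterated Ore presentation \eqref{eq-A-u} of $(\Cset[z_1,\ldots,z_n],\{\,,\,\}_\bfu)$ as the infinitesimal generators of the left $B_-$-action on the dual Bott--Samelson variety $Z_{-\bfu}^\prime$.
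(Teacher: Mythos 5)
Your reduction is correct as far as it goes: since the two parametrizations use the same group elements, $I$ is the identity in coordinates, and the statement is equivalent to $\{z_i,z_k\}'=-\{z_i,z_k\}_\bfu$. Your middle step is also sound and is in fact carried out in the paper (Lemma \ref{le-Z-Pi-2} and Lemma \ref{le-zizj-prime}): iterating the right-handed ${\mathbb P}^1$-extension lemma gives $\{z_i,z_k\}'=-c_{i,k}z_iz_k-\la\alpha_k,\alpha_k\ra\sigma_k^\prime(z_i)$. But the last step is a genuine gap, and it is not merely ``an overall sign.'' After your reduction, the proposition is equivalent to the polynomial identity $\la\alpha_k,\alpha_k\ra\,\sigma_k^\prime(z_i)=-\la\alpha_i,\alpha_i\ra\,\sigma_i(z_k)$, where $\sigma_i(z_k)$ is computed by pushing $e_{\alpha_i}$ \emph{rightward} through the factors $i+1,\dots,k-1$ (Theorem \ref{epsilonbeta}: a sum over $J_{i,k}$, i.e.\ over chains of $\alpha_l$-strings through roots built from $\alpha_i$), while a ``left-handed Theorem \ref{epsilonbeta}'' computes $\sigma_k^\prime(z_i)$ by pushing $e_{-\alpha_k}$ \emph{leftward} through the factors $k-1,\dots,i+1$, producing a sum over an a priori different index set with different products of structure constants. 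The identity $u_\alpha(z)\ds_\alpha=\ds_\alpha u_{-\alpha}(-z)$ only rewrites the parametrization; it does not convert one iteration into the other, and asserting that ``the same root strings and structure constants appear'' is precisely the nontrivial content of the proposition, not a consequence of anything you have established. (Note that in the paper the identification $b_k^\prime=\la\alpha_k,\alpha_k\ra\sigma_k^\prime$ is Corollary \ref{co-b-k-prime}, \emph{deduced from} Proposition \ref{pr-dual-pair} together with Lemma \ref{le-zizj-prime} --- the logical order is the reverse of yours.)

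The paper sidesteps this combinatorial matching entirely with a soft argument: the projections $\rho:(G^n,\pi_{\rm st}^n)\to(F_n,\pi_n)$ and $\rho_-:(G^n,\pi_{\rm st}^n)\to(F_{-n}^\prime,\pi_{-n}^\prime)$ form a Poisson pair; the points $(u_{\alpha_1}(z_1)\ds_{\alpha_1},\dots,u_{\alpha_n}(z_n)\ds_{\alpha_n})$ sweep out a Lagrangian submanifold $L$ of the product $\Sigma_{\alpha_1}\times\cdots\times\Sigma_{\alpha_n}$ of symplectic leaves of $\pist$ through the $\ds_{\alpha_j}$ (this uses the explicit description of $\Sigma_\alpha$ via the surface $t^2(1-pq)=1$ in $SL(2,\Cset)$); and the general Lemma \ref{le-graph-coisotropic} then says that $\rho_-\circ(\rho|_L)^{-1}$ is anti-Poisson, with no coordinate computation at all. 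If you want to salvage your route, the missing identity can plausibly be proved using invariance of $\lara$ to relate the coefficient of $e_{\alpha_k}$ in ${\rm Ad}_{g^{-1}}(e_{\alpha_i})$ to that of $e_{-\alpha_i}$ in ${\rm Ad}_{g}(e_{-\alpha_k})$ (which naturally produces the ratio $\la\alpha_i,\alpha_i\ra/\la\alpha_k,\alpha_k\ra$), but one must also track the intermediate $h_{\alpha_l}$-components that generate the $\psi$-terms, and none of this is in your proposal.
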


\begin{proof}
Let $\rho: G^n \to F_n$ be the natural projection, so that $\pi_n = \rho(\pi_{\rm st}^n)$.
It is proved in \cite[$\S$8]{Lu-Mou:mixed} that the pair 
\[
\rho: \;\; (G^n, \, \pi_{\rm st}^n) \lrw (F_n, \, \pi_n) \hs
\mbox{and} \hs \rho_-: \;\; (G^n, \, \pi_{\rm st}^n) \lrw \left(F_{-n}^\prime, \;\, \pi_{-n}^\prime\right)
\]
of Poisson submersions 
is a Poisson pair (see $\S$\ref{sec-Poisson-pairs} the Appendix), i.e., the map
\[
(\rho, \; \rho_-): \;\; (G^n, \, \pi_{\rm st}^n) \lrw (F_n \times F_{-n}^\prime, \;  \pi_n \times \pi_{-n}^\prime),
\;\; (g, \; g^\prime) \longmapsto (\rho(g), \; \rho_-(g')), \hs g, g' \in G^n,
\]
is Poisson.
For $\alpha \in \Gamma$, let $\Sigma_\alpha$ be the symplectic leaf of $\pist$ in $G$ through the point $\dot{s}_\alpha 
\in G$. To describe the two-dimensional symplectic manifold $(\Sigma_\alpha, \pist|_{\Sigma_\alpha})$,
consider the surface
\[
\Sigma = \{(p, q, t) \in \Cset^3: \; t^2(1-pq) = 1\}
\]
in $\Cset^3$ and equip $\Sigma$ with the Poisson structure $\pi$ given by
\begin{equation}\label{eq-pi}
\{p, q\} = 2(1-pq), \hs \{p, t\} = pt, \hs \{q, t\} = -qt.
\end{equation}
A calculation in $SL(2, \Cset)$ shows that 
the embedding
\[
J: \;\; \Sigma \longrightarrow SL(2, \Cset), \;\;\; (p, q, t) \longmapsto 
\left(\begin{array}{cc}pt & -t \\ t & -qt\end{array}\right), \hs (p, q, t) \in \Sigma,
\]
identifies $(\Sigma, \pi)$ as the symplectic leaf through 
$\displaystyle \left(\begin{array}{cc}0 & -1 \\ 1 & 0\end{array}\right) \in SL(2, \Cset)$ of
the Poisson structure $\pi_{\scriptscriptstyle{SL(2, \Cset)}}$ on $SL(2, \Cset)$ in \eqref{eq-pi-alpha}. 
Using the Poisson homomorphism $\theta_\alpha$ in \eqref{theta-Poisson}, one sees \cite{KZ} that
\[
\Sigma_\alpha = \{g_\alpha(p, q, t): \; (p, q, t) \in \Sigma\},
\]
and $\pist|_{\Sigma_\alpha} = \frac{\la \alpha, \alpha\ra}{2}(\theta_\alpha \circ J)(\pi)$, 
where for $(p, q, t) \in \Sigma$, 
\begin{equation}\label{eq-g-alpha}
g_\alpha(p, q, t) = \theta_\alpha\left(\begin{array}{cc}pt & -t \\ t & -qt\end{array}\right)= 
u_\alpha(p) \ds_\alpha \check{\alpha}(t) u_\alpha(-q)
=u_{-\alpha}(q) \check{\alpha}(t) \ds_\alpha u_{-\alpha}(-p).
\end{equation}
Consider now the product manifold $\Sigma_{\bfu} = \Sigma_{\alpha_1} \times \Sigma_{\alpha_2} \times \cdots \times \Sigma_{\alpha_n}$
and denote the restriction of the product Poisson structure $\pi_{\rm st}^n$ to
$\Sigma_{\bfu}$ still by $\pi_{\rm st}^n$. 
It follows from \eqref{eq-g-alpha} that 
\[
\rho(\Sigma_{\bfu}) = \O^\bfu
\hs \mbox{and} \hs
\rho_-(\Sigma_{\bfu}) = \O^{\prime,\bfu}_{-},
\]
and, denoting again by $\rho$ (resp. $\rho_-$) the induced map from  
$\Sigma_{\bfu}$ to $\O^\bfu$ (resp. to $\O^{\prime, \bfu}_-$), 
\begin{equation}\label{eq-rho-pm}
\rho: \; (\Sigma_{\bfu}, \, \pi_{\rm st}^n) \lrw 
(\O^\bfu, \; \pi_n)
\hs \mbox{and} \hs
\rho_-: \; (\Sigma_{\bfu}, \; \pi_{\rm st}^n) \lrw (\O^{\prime,\bfu}_{-}, \; \pi_{-n}^\prime)
\end{equation}
are Poisson submersions and form a Poisson pair. Moreover,
the submanifold 
\[
L:=\{ (u_{\alpha_1} (z_1)\dot{s}_{\alpha_1}, \; u_{\alpha_2} (z_2)\dot{s}_{\alpha_2}, \;\ldots, \; 
u_{\alpha_n} (z_n)\dot{s}_{\alpha_n}): \;  (z_1, z_2, \ldots, z_n) \in \Cset^n\}
\]
of $\Sigma_{\bfu}$ is Lagrangian with respect to  $\pi_{\rm st}^n$, and it is clear that 
$\rho|_L: L \to \O^\bfu$ is a diffeomorphism. It now follows from Lemma 
\ref{le-graph-coisotropic} in the Appendix that  $I= \rho_- \circ (\rho|_L)^{-1}: (\O^{\bfu}, \, \pi_n) \to (\O^{\prime, \bfu}_-, \; \pi_{-n}^\prime)$ 
is a Poisson anti-isomorphism.
\end{proof}

We now prove a fact similar to that in Lemma \ref{le-Z-Pi-1}: let $(X, \piX)$ be a Poisson manifold with 
a right Poisson action by the Poisson Lie group $(B_-, \pist)$, 
let $\alpha$ be a simple root, and consider the quotient manifold
$Z = X \times_{B_-} P_{-s_\alpha}$ (see notation in $\S$\ref{piBS}) 
equipped with Poisson structure $\piZ$ which is the projection to $Z$ of the product Poisson structure $\piX \times \pist$ on $X \times
P_{-s_\alpha}$. Denote by $[x, p]$ the image of $(x, p) \in X \times P_{-s_\alpha}$ in $Z$. Fix any 
$\sl(2, \Cset)$-triple $\{e_\alpha, e_{-\alpha}, h_\alpha\}$ and consider
\[
\phi: \;\; X \times \Cset \lrw Z_0, \;\; (x, z) \longmapsto [x, \; u_\alpha(z)\dot{s}_\alpha], \hs x \in X, \, z \in \Cset.
\]
Then $\phi$ is an embedding, and we regard $\phi$ as a 
diffeomorphism from $X \times \Cset$ to $Z_0 = \phi(X \times \Cset)$.
For $\xi \in \b$, let $\sigma_\xi^\prime$ be the vector field on $X$ defined by
\[
\sigma_\xi^\prime(x) = \frac{d}{dt}|_{t=0} (x \exp(t\xi)), \hs x \in X.
\]
Using the second part of \eqref{eq-lpi}, the proof of the following Lemma \ref{le-Z-Pi-2} is similar to that of Lemma \ref{le-Z-Pi-1} and is omitted.

\begin{lemma}\label{le-Z-Pi-2}
With the notation as above, one has
\[
\phi^{-1}(\piZ)(x, z) = \piX(x) + 
\frac{\la \alpha, \alpha \ra}{2} \frac{d}{dz} \wedge \left(z\sigma_{h_\alpha}^\prime(x)+2\sigma_{e_{-\alpha}}^\prime(x)\right).
\]
\end{lemma}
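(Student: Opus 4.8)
The plan is to mirror, on the opposite side, the computation used to prove Lemma~\ref{le-Z-Pi-1}. First I would record the variant of Example~\ref{ex-LQY} appropriate to the present quotient: since $(P_{-s_\alpha}, \pist)$ is a Poisson Lie group containing $(B_-, \pist)$ as a Poisson Lie subgroup, and $B_-$ acts on $P_{-s_\alpha}$ by left translation, the right action
\[
\nu: \; Z \times P_{-s_\alpha} \lrw Z, \qquad ([x', p], \, p') \longmapsto [x', \, pp'],
\]
is a Poisson action of $(P_{-s_\alpha}, \pist)$ on $(Z, \piZ)$, and, because $\pist(e) = 0$, the inclusion $\iota: X \hookrightarrow Z$, $x' \mapsto [x', e]$, is a Poisson embedding of $(X, \piX)$. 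Writing $q = \phi(x, z) = [x, g]$ with $g = u_\alpha(z)\ds_\alpha$, the Poisson-action identity for $\nu$ (exactly as in the proof of Lemma~\ref{le-Z-Pi-1}, but now for a right action) gives
\[
\piZ(q) = (\nu_g)\big(\piZ([x, e])\big) + (\rho_x)\big(\pist(g)\big),
\]
where $\nu_g: Z \to Z$ is $[x', p] \mapsto [x', pg]$ and $\rho_x: P_{-s_\alpha} \to Z$ is $p' \mapsto [x, p']$.

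Next I would evaluate the two terms after applying $\phi^{-1}$. For the first term, $\nu_g$ sends the slice $\iota(X)$ into $\phi(X \times \{z\})$ by $[x', e] \mapsto [x', g] = \phi(x', z)$, so $\phi^{-1}\circ \nu_g \circ \iota: X \to X \times \Cset$ is the inclusion at height $z$; since $\iota$ is a Poisson embedding this yields $\phi^{-1}\big((\nu_g)(\piZ([x, e]))\big) = \piX(x)$. For the second term I would feed in the \emph{second} (right-translation) expression of \eqref{eq-lpi}, namely $\pist(g) = \frac{\la \alpha, \alpha \ra}{2}\, r_{g}\big(z\, e_\alpha \wedge h_\alpha + 2\, e_\alpha \wedge e_{-\alpha}\big)$, and compute the differential of $\rho_x \circ r_g$ at the identity on the three vectors $e_\alpha$, $h_\alpha$, $e_{-\alpha}$. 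By \eqref{eb1}, $\exp(t e_\alpha)\, u_\alpha(z)\ds_\alpha = u_\alpha(t + z)\ds_\alpha$, so $(\rho_x \circ r_g)(\exp t e_\alpha) = \phi(x, t + z)$ and $e_\alpha$ pushes forward to $\tfrac{d}{dz}$. Since $h_\alpha, e_{-\alpha} \in \b_-$, one has $\exp(t h_\alpha), \exp(t e_{-\alpha}) \in B_-$, and the quotient relation $[x', p] = [x'b, \, b^{-1}p]$ for $b \in B_-$ lets one push these factors onto the $X$-side:
\[
[x, \exp(t h_\alpha)\, u_\alpha(z)\ds_\alpha] = [x\exp(t h_\alpha), \, u_\alpha(z)\ds_\alpha] = \phi\big(x \exp(t h_\alpha), \, z\big),
\]
and likewise for $e_{-\alpha}$; hence $h_\alpha$ and $e_{-\alpha}$ push forward to $\sigma_{h_\alpha}^\prime(x)$ and $\sigma_{e_{-\alpha}}^\prime(x)$ respectively. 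Pushing forward the bivector and collecting the two contributions gives
\[
\phi^{-1}(\piZ)(x, z) = \piX(x) + \frac{\la \alpha, \alpha \ra}{2}\, \frac{d}{dz} \wedge \big(z\, \sigma_{h_\alpha}^\prime(x) + 2\, \sigma_{e_{-\alpha}}^\prime(x)\big),
\]
as claimed.

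I do not expect a genuine mathematical obstacle: the argument is formally identical to that of Lemma~\ref{le-Z-Pi-1}, with left translations replaced by right translations and the $l_{g'}$-form of $\pist$ in \eqref{eq-lpi} replaced by the $r_{g'}$-form. The only points demanding care are bookkeeping ones — keeping track of the left $B_-$-quotient relation when sliding $\exp(t h_\alpha)$ and $\exp(t e_{-\alpha})$ onto the $X$-factor, and ensuring that it is $\rho_x \circ r_g$ (not $\rho_x \circ l_g$) that maps controllably into $\phi(X \times \Cset)$, which is exactly why the right-translation expression of $\pist(g)$ is the correct one to use here.
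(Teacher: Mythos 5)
Your proof is correct and follows exactly the route the paper intends: the paper omits the proof of Lemma \ref{le-Z-Pi-2}, stating only that it is analogous to Lemma \ref{le-Z-Pi-1} using the second (right-translation) expression in \eqref{eq-lpi}, which is precisely what you carry out. The bookkeeping points you flag — the mirrored Poisson-action decomposition for the right $P_{-s_\alpha}$-action, sliding $\exp(th_\alpha)$ and $\exp(te_{-\alpha})$ onto the $X$-factor via the $B_-$-quotient relation, and using \eqref{eb1} to see that $e_\alpha$ pushes forward to $\tfrac{d}{dz}$ — are all handled correctly.
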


Returning now to the Bott-Samelson variety $Z_{-\bfu}^\prime$ for $\bfu = (s_1, \ldots, s_n) = (s_{\alpha_1}, \ldots,
s_{\alpha_n})$,
let $2 \leq k \leq n$, and consider 
\[
Z_{-(s_1, \ldots, s_{k-1})}^{\prime} = B_-\backslash P_{-s_1} \times_{B_-} P_{-s_2} \times \cdots \times_{B_-} P_{-s_{k-1}}.
\]
Denote again by $[p_1, \ldots, p_{k-1}]_-$ the image of
$(p_1, \ldots, p_{k-1}) \in P_{-s_1} \times \cdots \times P_{-s_{k-1}}$ in $Z_{-(s_1, \ldots, s_{k-1})}^{\prime}$, and let
$B_-$ act on $Z_{-(s_1, \ldots, s_{k-1})}^{\prime}$ from the right by 
\[
[p_1, \, \ldots, \, p_{k-2}, \, p_{k-1}]_- \cdot b_- = [p_1, \, \ldots, \, p_{k-2}, \, p_{k-1}b_-],
\hs b_- \in B_-, \, p_i \in P_{-s_i}, \, 1 \leq i \leq k-1.
\]
For $\xi \in \b_-$, denote by $\sigma_{\xi}^{\prime, (k-1)}$
the vector field on $Z_{-(s_1, \ldots, s_{k-1})}^{\prime}$ given by 
\begin{equation}\label{eq-xi-k}
\sigma_{\xi}^{\prime, (k-1)}([p_1, \ldots, p_{k-2}, p_{k-1}]) = \frac{d}{dt}|_{t=0} [p_1, \ldots, p_{k-2}, p_{k-1}\exp(t\xi)]_-,
\end{equation}
where $p_i \in P_{-s_i}$ for $1 \leq i \leq k-1$,
so $\sigma_{\xi}^{\prime, (k-1)}$ generates the action of $B_-$ on $Z_{-(s_1, \ldots, s_{k-1})}^{\prime}$ in the direction of 
$\xi$.
Let 
\begin{equation}\label{eq-e-k-prime}
\sigma_k^\prime = \sigma_{e_{-\alpha}}^{\prime, (k-1)}.
\end{equation}
Consider the coordinates $(z_1, z_2, \ldots, z_n)$ on $\O_-^{\prime, \bfu}$ given in 
\eqref{eq-coor-prime}. Then $(z_1, \ldots, z_{k-1})$ can be considered as coordinates on 
the open submanifold 
\begin{align*}
\O^{\prime, (s_1, \ldots, s_{k-1})}_- &= B_-\backslash (B_-s_1 B_-) \times_{B_-} (B_-s_2 B_-) \times \cdots \times_{B_-}
(B_- s_{k-1} B_-)\\
&=\{[u_{\alpha_1} (z_1)\dot{s}_{\alpha_1},  \;\ldots, \; 
u_{\alpha_{k-1}} (z_{k-1})\dot{s}_{\alpha_{k-1}}]_-: \; (z_1, \ldots, z_{k-1}) \in \Cset^{k-1}\}
\end{align*}
of $Z_{-(s_1, \ldots, s_{k-1})}^{\prime}$, and $\sigma_k^\prime$ can be regarded as a derivation on
$\Cset[z_1, \ldots, z_{k-1}]$.

\begin{lemma}\label{le-zizj-prime}
In the coordinates $(z_1, z_2, \ldots, z_n)$ on $\O_-^{\prime, \bfu}$ given in 
\eqref{eq-coor-prime}, the Poisson structure $\pi_{-n}^\prime$ is given by 
\begin{equation}\label{eq-zizj-u-prime}
\{z_i, \, z_k\} = -c_{i,k} z_i z_k -\la \alpha_k, \, \alpha_k\ra \sigma_k^\prime(z_i),\hs 1 \leq i < k \leq n,
\end{equation}
where for $1 \leq i, k \leq n$, $c_{i,k}$ is given in \eqref{eq-cik}.
\end{lemma}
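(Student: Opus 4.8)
The plan is to prove \eqref{eq-zizj-u-prime} by induction on $n$, peeling off the last factor $P_{-s_n}$ and invoking Lemma \ref{le-Z-Pi-2}. Set $X = Z_{-(s_1, \ldots, s_{n-1})}^\prime$, equipped with the Poisson structure $\pi_{-(n-1)}^\prime$ and the right $B_-$-action on its last $P_{-s_{n-1}}$-factor introduced in \eqref{eq-xi-k}; exactly as in the ``mirror'' of Example \ref{ex-LQY} (using right translations on $G$ and the multiplicativity of $\pist$), this is a right Poisson action of $(B_-, \pist)$ on $(X, \pi_{-(n-1)}^\prime)$. Moreover $Z_{-\bfu}^\prime = X \times_{B_-} P_{-s_n}$ and $\pi_{-n}^\prime$ is the projection to this quotient of $\pi_{-(n-1)}^\prime \times \pist$, both being projections of $\pi_{\rm st}^n$ on $G^n$. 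Applying Lemma \ref{le-Z-Pi-2} with $\alpha = \alpha_n$ and $\phi(x, z) = [x, \, u_{\alpha_n}(z)\dot{s}_{\alpha_n}]_-$, and reading off coefficients --- noting that since here $z = z_n$ is the \emph{last} coordinate, a bivector $\frac{d}{dz_n} \wedge W$ with $W$ a vector field in $z_1, \ldots, z_{n-1}$ contributes $-W(z_i)$ to $\{z_i, z_n\}$, opposite in sign to the situation in the proof of Lemma \ref{le-zizj-1} --- one obtains, in the coordinates \eqref{eq-coor-prime},
\begin{align*}
\{z_i, z_j\}_{\pi_{-n}^\prime} &= \{z_i, z_j\}_{\pi_{-(n-1)}^\prime}, \qquad 1 \le i < j \le n-1, \\
\{z_i, z_n\}_{\pi_{-n}^\prime} &= -\frac{\la \alpha_n, \alpha_n\ra}{2}\, z_n\, \sigma_{h_{\alpha_n}}^\prime(z_i) - \la \alpha_n, \alpha_n\ra\, \sigma_{e_{-\alpha_n}}^\prime(z_i), \qquad 1 \le i \le n-1,
\end{align*}
where $\sigma_{h_{\alpha_n}}^\prime$ and $\sigma_{e_{-\alpha_n}}^\prime$ are the vector fields on $X$ as in Lemma \ref{le-Z-Pi-2}.

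For $j \le n-1$, the first line, the induction hypothesis applied to $(Z_{-(s_1, \ldots, s_{n-1})}^\prime, \pi_{-(n-1)}^\prime)$, and the fact that both the constant $c_{i,j}$ of \eqref{eq-cik} and the derivation $\sigma_j^\prime = \sigma_{e_{-\alpha_j}}^{\prime, (j-1)}$ of \eqref{eq-e-k-prime} are intrinsic to $Z_{-(s_1, \ldots, s_{j-1})}^\prime$ and independent of $n$, together yield \eqref{eq-zizj-u-prime} in the range $j < n$; the base case $n = 1$ is vacuous. For $j = n$ we have $\sigma_{e_{-\alpha_n}}^\prime = \sigma_n^\prime$ by \eqref{eq-e-k-prime}, so it remains only to prove $\sigma_{h_{\alpha_n}}^\prime(z_i) = \frac{2 c_{i,n}}{\la \alpha_n, \alpha_n\ra}\, z_i$ for $1 \le i \le n-1$.

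This last identity is the analogue of \eqref{hvectorfield} for the right $T$-action on $Z_{-(s_1, \ldots, s_{n-1})}^\prime$, and is where the only genuine computation lies. Writing $p_j = u_{\alpha_j}(z_j)\dot{s}_{\alpha_j} = \dot{s}_{\alpha_j} u_{-\alpha_j}(-z_j)$ and pushing $t \in T$ rightward through the factors --- using $u_{-\alpha}(v)\, t = t\, u_{-\alpha}(t^{\alpha} v)$, the relation $\dot{s}_\alpha t = s_\alpha(t)\, \dot{s}_\alpha$, and the equivalence relation defining $Z_{-(s_1, \ldots, s_{n-1})}^\prime$, cf.\ \eqref{eq-BP-left} (so the $T$-element reaching the $j$-th factor is $s_{\alpha_{j+1}} s_{\alpha_{j+2}}\cdots s_{\alpha_{n-1}}(t)$, after which it is absorbed into the $j$-th and $(j-1)$-st slots) --- one finds
\begin{equation*}
[p_1, \ldots, p_{n-1}]_- \cdot t = \big[\, u_{\alpha_1}(t^{\mu_1} z_1)\dot{s}_{\alpha_1}, \; \ldots, \; u_{\alpha_{n-1}}(t^{\mu_{n-1}} z_{n-1})\dot{s}_{\alpha_{n-1}}\,\big]_-, \qquad \mu_j = s_{\alpha_{n-1}} s_{\alpha_{n-2}}\cdots s_{\alpha_{j+1}}(\alpha_j),
\end{equation*}
so that $\sigma_{h_{\alpha_n}}^\prime(z_i) = \mu_i(h_{\alpha_n})\, z_i = \frac{2\la \mu_i, \alpha_n\ra}{\la \alpha_n, \alpha_n\ra}\, z_i$. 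Finally, using the $W$-invariance of $\lara$, the identity $(s_{\alpha_{i-1}}\cdots s_{\alpha_1})(s_{\alpha_1}\cdots s_{\alpha_{n-1}}) = s_{\alpha_i}\cdots s_{\alpha_{n-1}}$, and $\la \alpha_i, s_{\alpha_i}(\nu)\ra = -\la \alpha_i, \nu\ra$,
\begin{equation*}
\la \mu_i, \alpha_n\ra = \la \alpha_i, \, s_{\alpha_{i+1}}\cdots s_{\alpha_{n-1}}(\alpha_n)\ra = -\la s_{\alpha_1}\cdots s_{\alpha_{i-1}}(\alpha_i), \; s_{\alpha_1}\cdots s_{\alpha_{n-1}}(\alpha_n)\ra = c_{i,n},
\end{equation*}
which gives the required identity and closes the induction. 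The main obstacle is keeping this Weyl-group bookkeeping straight --- in particular the order in which reflections accumulate under repeated conjugation past the $\dot{s}_{\alpha_j}$, and the sign conventions relating $(s(t))^\beta$ to $t^{s(\beta)}$ and $\lara$ to the $h_\alpha$ --- but once Lemma \ref{le-Z-Pi-2} is available there is no conceptual difficulty.
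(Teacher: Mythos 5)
Your proof is correct and follows essentially the same route as the paper: the paper likewise obtains $\{z_i,z_k\}=-\tfrac{\la\alpha_k,\alpha_k\ra}{2}z_k\,\sigma^{\prime,(k-1)}_{h_{\alpha_k}}(z_i)-\la\alpha_k,\alpha_k\ra\,\sigma_k^\prime(z_i)$ by repeated application of Lemma \ref{le-Z-Pi-2} (your induction on $n$ is the same peeling), and then reduces to the weight computation $\sigma^{\prime,(k-1)}_{h}(z_i)=\bigl(s_{k-1}\cdots s_{i+1}(\alpha_i)\bigr)(h)\,z_i$ together with the identity $\la s_{k-1}\cdots s_{i+1}(\alpha_i),\alpha_k\ra=c_{i,k}$, exactly as in your last two displays. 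The only difference is that you spell out the right $T$-action bookkeeping that the paper dismisses as ``one checks directly,'' and your details check out.
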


\begin{proof}
By repeatedly applying Lemma \ref{le-Z-Pi-2} to the Poisson manifold $(\O_-^{\prime, \bfu}, \pi_{-n}^\prime)$, one sees that
$\pi_{-n}^\prime$ is given in the coordinates $(z_1, z_2, \ldots, z_n)$ on $\O_-^{\prime, \bfu}$ by (see notation in
\eqref{eq-xi-k})
\[
\{z_i, \, z_k\} = -\frac{\la \alpha_k, \, \alpha_k\ra}{2} z_k \sigma_{h_{\alpha_k}}^{\prime, (k-1)}(z_i)
-\la \alpha_k, \, \alpha_k\ra \sigma_{k}^{\prime}(z_i),\hs 1 \leq i < k \leq n,
\]
For $h \in \t$, one checks directly from the definition of the vector field $\sigma_h^{\prime, (k-1)}$ that
\begin{equation}\label{eq-h-k}
\sigma_h^{\prime, (k-1)}(z_i) = (s_{k-1}s_{k-2} \cdots s_{i+1}(\alpha_i)(h)) z_i, \hs \hs 1 \leq i \leq k-1.
\end{equation}
\eqref{eq-zizj-u-prime} now follows from 
\begin{align*}
\frac{\la \alpha_k, \, \alpha_k\ra}{2} \sigma_{h_{\alpha_k}}^{\prime, (k-1)}(z_i)& = 
\la s_{k-1}s_{k-2}\cdots s_{i+1}(\alpha_i), \; \alpha_k\ra z_i\\
&= -\la s_1s_2 \cdots s_{i-1}(\alpha_i), \;
s_1s_2 \cdots s_{k-1}(\alpha_k)\ra z_i\\
&=c_{i,k}z_i.
\end{align*}
\end{proof}

\begin{corollary}\label{co-b-k-prime}
In the notation in Theorem \ref{thm-Ore} for the case of $\gamma = \bfu$, one has 
\[
b_k^\prime = \la \alpha_k, \alpha_k\ra \sigma_k^\prime, \hs \hs 
2 \leq k \leq n.
\]
\end{corollary}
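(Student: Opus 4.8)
The goal is to identify the reverse-order derivation $b_k^\prime$ on $\Cset[z_1, \ldots, z_{k-1}]$ appearing in Theorem \ref{thm-Ore} with $\la \alpha_k, \alpha_k\ra \sigma_k^\prime$, where $\sigma_k^\prime$ is the derivation coming from the vector field $\sigma_{e_{-\alpha_k}}^{\prime, (k-1)}$ on the dual Bott-Samelson variety $Z_{-(s_1, \ldots, s_{k-1})}^\prime$. The plan is to compare two expressions for the same Poisson bracket $\{z_i, z_k\}_\bfu$ and invoke uniqueness of the derivation $b_k^\prime$. By Theorem \ref{thm-Ore}, $b_k^\prime$ is characterized as the unique derivation of $\Cset[z_1, \ldots, z_{k-1}]$ with $b_k^\prime(z_i) = -\la \alpha_i, \alpha_i\ra \sigma_i(z_k)$ for $1 \leq i \leq k-1$; equivalently, by \eqref{eq-zizk-u}, $\{z_i, z_k\}_\bfu = c_{i,k} z_iz_k + b_k^\prime(z_i)$.

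First I would bring in the Poisson anti-isomorphism $I\colon (\O^\bfu, \pi_n) \to (\O^{\prime, \bfu}_-, \pi_{-n}^\prime)$ of Proposition \ref{pr-dual-pair}. Since $I$ sends the point with Bott-Samelson coordinates $(z_1, \ldots, z_n)$ to the point of $\O^{\prime, \bfu}_-$ with the same coordinates $(z_1, \ldots, z_n)$ as in \eqref{eq-coor-prime}, pulling back functions along $I$ identifies the coordinate function $z_i$ on $\O^\bfu$ with the coordinate function $z_i$ on $\O^{\prime, \bfu}_-$. Being a Poisson anti-isomorphism, $I$ satisfies $I^*\{f, g\}_{\pi_{-n}^\prime} = -\{I^*f, I^*g\}_{\pi_n}$, so for $1 \leq i < k \leq n$ one gets
\[
\{z_i, z_k\}_\bfu = -\{z_i, z_k\}_{\pi_{-n}^\prime}.
\]
Then I would substitute the formula from Lemma \ref{le-zizj-prime}, namely $\{z_i, z_k\}_{\pi_{-n}^\prime} = -c_{i,k} z_iz_k - \la \alpha_k, \alpha_k\ra \sigma_k^\prime(z_i)$, to obtain
\[
\{z_i, z_k\}_\bfu = c_{i,k} z_iz_k + \la \alpha_k, \alpha_k\ra \sigma_k^\prime(z_i), \hs 1 \leq i < k \leq n.
\]

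Comparing this with $\{z_i, z_k\}_\bfu = c_{i,k} z_iz_k + b_k^\prime(z_i)$ from \eqref{eq-zizk-u}, I conclude $b_k^\prime(z_i) = \la \alpha_k, \alpha_k\ra \sigma_k^\prime(z_i)$ for all $1 \leq i \leq k-1$. Since both sides are derivations of $\Cset[z_1, \ldots, z_{k-1}]$ agreeing on the algebra generators $z_1, \ldots, z_{k-1}$, they coincide, giving $b_k^\prime = \la \alpha_k, \alpha_k\ra \sigma_k^\prime$ as claimed. The only subtlety — and the main thing to verify carefully rather than any real obstacle — is bookkeeping the sign: the anti-isomorphism contributes one sign flip, and Lemma \ref{le-zizj-prime} already carries a leading minus sign on the $c_{i,k} z_iz_k$ term, so the two flips conspire so that the quadratic coefficients match on the nose (both equal $c_{i,k}$) while the $\sigma_k^\prime$ term ends up with a $+\la\alpha_k,\alpha_k\ra$, matching the characterization of $b_k^\prime$. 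I would also note in passing that $\sigma_k^\prime(z_i)$ is indeed a polynomial in $z_1, \ldots, z_{k-1}$ (and in fact lies in $\Cset[z_{i+1}, \ldots, z_{k-1}]$, consistent with the symmetry statement of Theorem \ref{thm-Ore}), which is automatic since $I$ matches the two coordinate systems chart-for-chart.
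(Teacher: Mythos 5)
Your proof is correct and follows essentially the same route as the paper: the paper also combines the anti-isomorphism of Proposition \ref{pr-dual-pair} (which matches the two coordinate systems chart-for-chart) with the formula of Lemma \ref{le-zizj-prime} to rewrite $\{z_i,z_k\}_\bfu$ as $c_{i,k}z_iz_k + \la\alpha_k,\alpha_k\ra\sigma_k^\prime(z_i)$, and then concludes from the defining property of $b_k^\prime$. Your sign bookkeeping and the appeal to uniqueness of a derivation determined on the generators $z_1,\ldots,z_{k-1}$ are exactly what the paper's (more terse) argument relies on.
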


\begin{proof} By Proposition \ref{pr-dual-pair} and Lemma \ref{le-zizj-prime}, the Poisson structure $\pi_n$ is given 
in the coordinates $(z_1, \ldots, z_n)$ on the affine chart
$\O^\bfu$ by 
\[
\{z_i, \, z_k\} = c_{i,k} z_i z_k +\la \alpha_k, \, \alpha_k\ra \sigma_k^\prime(z_i),\hs 1 \leq i < k \leq n.
\]
It follows from the definition of $b_k^\prime$ that $b_k^\prime = \la \alpha_k, \alpha_k\ra \sigma_k^\prime$ for
$2 \leq k \leq n$.
\end{proof}

\begin{remark}\label{re-h-b}
{\em
We already know from Lemma \ref{le-reverse} that for any $h \in \t$ and $2 \leq k \leq n$,
$[a_h^\prime, \, b_k^\prime] =\lambda_k(h)b_k^\prime$, as derivations of $\Cset[z_1, \ldots, z_{k-1}]$, where 
$a_h^\prime = \partial_h|_{\Cset[z_1, \ldots, z_{k-1}]}$ and $\lambda_k= s_1s_2 \cdots s_{k-1}(\alpha_k)$.
This fact can also be checked directly from Corollary \ref{co-b-k-prime}. Indeed, 
that in the notation of 
\eqref{eq-xi-k}, it follows from \eqref{eq-h-k} that $a_h^\prime = -\sigma^{\prime, (k-1)}_{s_{k-1} \cdots s_2s_1(h)}$ and
$b_k^\prime = \la \alpha_k, \alpha_k\ra \sigma_{e_{-\alpha_k}}^{\prime, (k-1)}$, so 
\[
[a_h^\prime, \, b_k^\prime] = -\la \alpha_k, \alpha_k\ra \left[\sigma^{\prime, (k-1)}_{s_{k-1} \cdots s_2s_1(h)}, \;
\sigma_{e_{-\alpha_k}}^{\prime, (k-1)}\right] = \lambda_k(h) b_k^\prime.
\]
\hfill $\diamond$
}
\end{remark}

\subsection{The polynomial rings $(\Zset[z_1, \ldots, z_n], \{\, , \, \}_\gamma)$}\label{subsec-Z}
Recall from $\S$\ref{piZw} that once the Borel subgroup $B$ and the maximal torus $T \subset B$ of $G$ are fixed, 
the definition of the Poisson structure $\pi_n$ on $\Zu$ depends only on the choice
of a symmetric non-degenerate invariant bilinear form $\lara$ on $\g$ and not on the choices of root
vectors $e_\alpha$ for $\alpha \in \Delta$. Although a choice of the set $\{e_\alpha: \alpha \in \Gamma\}$ of
root vectors for the simple roots is needed to define the coordinates $(z_1, \ldots, z_n)$ on 
$\O^\gamma$ for $\gamma \in \Upu$, we proved in Proposition \ref{pr-indep} that the polynomials $f_{i,k} :=\{z_i, z_k\} \in \Cset[z_1, \ldots, z_n]$
for $1 \leq i, k \leq n$
are independent on the choices of the root vectors for the simple roots. For each $\gamma \in \Upu$, one thus has a
well-defined Poisson polynomial algebra $(\Cset[z_1, \ldots, z_n], \{\, , \, \}_\gamma)$.

\begin{theorem}\label{th-Z}
Suppose that the symmetric non-degenerate invariant bilinear form $\lara$ on $\g$ is chosen such that 
$\frac{1}{2} \la \alpha, \, \alpha\ra \in \Zset$ for each $\alpha \in \Delta$. Then for any 
$\gamma \in \Upu$, the Poisson structure $\{\, , \, \}_\gamma$ on $\Cset[z_1, \ldots, z_n]$ has the property that
$\{z_i, z_k\} \in \Zset[z_i, \ldots, z_k] \subset \Zset[z_1, \ldots, z_n]$ for all $1 \leq i < k \leq n$.
\end{theorem}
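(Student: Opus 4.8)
The proof combines the explicit formula for $\pi_n$ in Theorem~\ref{th-zizj} with the integrality of Chevalley structure constants recorded in Remark~\ref{chevbase}. Recall first that Theorem~\ref{th-zizj} already gives $\{z_i,z_k\}_\gamma \in \Cset[z_i,\ldots,z_k]$; only the integrality of the coefficients remains to be shown. The first step is a reduction to a Chevalley basis: by Proposition~\ref{pr-indep} the polynomials $f_{i,k} := \{z_i,z_k\}_\gamma \in \Cset[z_1,\ldots,z_n]$ are independent of the choice of root vectors $\{e_\alpha : \alpha \in \Gamma\}$ for the simple roots, and by Lemma~\ref{le-indep} the polynomials $\phi^\gamma_{i,k}$ and $\psi^\gamma_{i,k}$ appearing in \eqref{eq-Vi-zk} are independent of the extension of $\{e_\alpha : \alpha \in \Gamma\}$ to a basis $\{h_\alpha\}_{\alpha \in \Gamma} \cup \{e_\alpha \in \g_\alpha\}_{\alpha \in \Delta}$ of $\g$ with $[e_\alpha, e_{-\alpha}] = h_\alpha$. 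Hence it suffices to prove the claim after choosing this basis to be a Chevalley basis of $\g$, which we now do.

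The second step is to show that for every $\gamma \in \Upu$ and $1 \leq i < k \leq n$ one has $\phi^\gamma_{i,k}, \psi^\gamma_{i,k} \in \Zset[z_{i+1},\ldots,z_{k-1}]$, and therefore $\sigma_i(z_k) \in \Zset[z_{i+1},\ldots,z_k]$ by \eqref{eq-Vi-zk}. By \eqref{eq-phi-ik}, \eqref{phibeta} and \eqref{const} (applied with $\bfu$ replaced by $(s_{i+1},\ldots,s_n)$), every coefficient of $\phi^\gamma_{i,k}$ is a product of factors $c^{\gamma_l,j_l}_{\alpha_l,\beta_{(\ldots)}}$, each of which is an integer by Remark~\ref{chevbase} together with the convention \eqref{eq-c0}; hence $\phi^\gamma_{i,k}$ has integer coefficients. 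By \eqref{eq-psi-ik} and \eqref{psibeta}, every coefficient of $\psi^\gamma_{i,k}$ is an integer coming from $\phi$ times a number of the form $\frac{2\langle \beta, \delta\rangle}{\langle \beta, \beta\rangle}$ with $\beta,\delta$ roots (they are Weyl translates of simple roots, hence roots); such a number is a Cartan integer, so $\psi^\gamma_{i,k}$ also has integer coefficients.

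The third step assembles the answer from \eqref{eq-zizj-th}. The coefficient of $z_iz_k$ in $\{z_i,z_k\}_\gamma$ is $\pm\langle \gamma^i(\alpha_i), \gamma^k(\alpha_k)\rangle$; since $\gamma^i(\alpha_i)$ and $\gamma^k(\alpha_k)$ are roots and the form is Weyl-invariant, $\langle \gamma^i(\alpha_i), \gamma^k(\alpha_k)\rangle = \frac{2\langle \gamma^i(\alpha_i), \gamma^k(\alpha_k)\rangle}{\langle \gamma^i(\alpha_i), \gamma^i(\alpha_i)\rangle} \cdot \frac{\langle \alpha_i, \alpha_i\rangle}{2}$ is the product of a Cartan integer and the integer $\frac{1}{2}\langle \alpha_i,\alpha_i\rangle$, hence is an integer. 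If $\gamma_i = s_i$, \eqref{eq-zizj-th} contributes in addition $-\langle \alpha_i,\alpha_i\rangle \sigma_i(z_k)$; here $\langle \alpha_i,\alpha_i\rangle = 2 \cdot \frac{1}{2}\langle\alpha_i,\alpha_i\rangle \in \Zset$ and $\sigma_i(z_k) \in \Zset[z_{i+1},\ldots,z_k]$ by Step~2. In all cases $\{z_i,z_k\}_\gamma$ is a $\Zset$-linear combination of $z_iz_k$ and the monomials of $\sigma_i(z_k)$, so $\{z_i,z_k\}_\gamma \in \Zset[z_i,\ldots,z_k]$, which proves the theorem.

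I expect the only delicate point to be the bookkeeping in Step~2: one must verify that all the intermediate weights $\beta_{(\ldots)}$ occurring in the relevant (nonempty) index sets are genuine roots, so that the root-system facts used ($\frac{2\langle\beta,\delta\rangle}{\langle\beta,\beta\rangle} \in \Zset$ for roots $\beta,\delta$, and $\langle\beta,\beta\rangle = \langle\alpha_l,\alpha_l\rangle$ by Weyl-invariance) genuinely apply, and that the boundary convention \eqref{eq-c0} correctly handles the case $\beta_{(\ldots)} = \alpha_l$. Everything else is a direct substitution into Theorem~\ref{th-zizj}.
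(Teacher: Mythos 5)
Your proof is correct and follows essentially the same route as the paper's: reduce to a Chevalley basis (justified by Proposition \ref{pr-indep} and Lemma \ref{le-indep}), invoke the integrality of the coefficients $c^{\kappa,j}_{\alpha,\beta}$ from Remark \ref{chevbase}, and observe that $\la \alpha, \beta\ra = \frac{2\la\alpha,\beta\ra}{\la\alpha,\alpha\ra}\cdot\frac{\la\alpha,\alpha\ra}{2} \in \Zset$ for roots $\alpha,\beta$. The paper states this in two lines; you have merely spelled out the bookkeeping, and the delicate point you flag is handled automatically since the index sets $J_k$ require the intermediate weights $\beta_{(\ldots)}$ to lie in $\Delta_+$ by definition.
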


\begin{proof}
Choose any set $\{e_\alpha: \alpha \in \Gamma\}$ of
root vectors for the simple roots and extend it to a Chevalley basis of $\g$. 
Theorem \ref{th-Z} now follows from Remark \ref{chevbase} and the fact that for any $\alpha, \beta \in \Delta$, 
\[
\la \alpha, \, \beta\ra = \frac{2\la\alpha, \beta\ra}{\la\alpha, \alpha\ra}\frac{\la \alpha, \alpha\ra}{2} \in \Zset.
\]
\end{proof}

Note that a canonical choice of the bilinear form $\lara$ on $\g$ is such that $\la \alpha, \alpha\ra = 2$ for
the short roots for each of the simple factors of $\g$. 

\begin{remark}\label{re-mod-2}
{\em
By Theorem \ref{th-Z}, each $\gamma \in \Upu$ gives rise to a Poisson algebra 
\[
({\bf k}[z_1, \ldots, z_n], \;\{\, ,  \}_\gamma)
\]
over any field ${\bf k}$ of arbitrary characteristic. In particular, it follows from \eqref{eq-zizj-th} in 
Theorem \ref{th-zizj} that the Poisson structure $\{\, , \, \}_\gamma$ on 
${\bf k}[z_1, \ldots, z_n]$ is log-canonical for every $\gamma \in \Upu$ if ${\rm char}({\bf k}) = 2$.

Choose the bilinear form $\lara$ on $\g$ such that $\la \alpha, \alpha \ra = 2$ for all the short roots. Then 
$\la \alpha, \alpha \ra  \in \{2, 4, 6\}$ for all $\alpha \in \Gamma$. It follows from \eqref{eq-lambda-h-i} that
$({\bf k}[z_1, \ldots, z_n], \{\,, \, \}_{\bfu})$ is a symmetric Poisson CGL extension of any field $\bk$ with
${\rm char}(\bk) \neq 2, 3$.
\hfill $\diamond$
}
\end{remark}

\subsection{Examples}\label{subsec-examples}
Assume that $\g$ is simple and let $\lara$ be such that $\la \alpha, \alpha\ra = 2$ for the short roots of $\g$.
Based on Theorem \ref{th-zizj}, the first author has written  a 
computer program in the GAP  language \cite{GAP4} which allows one to compute
the Poisson 
bracket $\{\, , \, \}_\gamma$ on $\Zset[z_1, \ldots, z_n]$ for any $\bfu = (s_1, \ldots, s_n)$ and any
$\gamma \in \Upu$. We given some examples.

\begin{example}\label{ex-G2-1}
{\em Consider $G_2$ with the two simple roots $\alpha_1$ and $\alpha_2$ satisfying 
\[
\la \alpha_2, \alpha_2\ra = 3\la \alpha_1, \alpha_1\ra = 6.
\]
 Let ${\bf u} = (s_{\alpha_1}, s_{\alpha_2}, s_{\alpha_1}, s_{\alpha_2},
s_{\alpha_1}, s_{\alpha_2})$ and note that $s_{\alpha_1}s_{\alpha_2}s_{\alpha_1}s_{\alpha_2}s_{\alpha_1}s_{\alpha_2}$
is the longest element in the Weyl group of $G_2$. For $\gamma = \bfu$, one has 
\begin{align*}
\{z_{1},z_{2}\} &= -3z_{1}z_{2}, \hs 
\{z_{1},z_{3}\} = -z_{1}z_{3}-2z_{2}, \hs 
\{z_{1},z_{4}\} = -6z_{3}^2, \\
\{z_{1},z_{5}\} &= z_{1}z_{5}-4z_{3}, \hs  
\{z_{1},z_{6}\} = 3z_{1}z_{6}-6z_{5}, \hs
\{z_{2},z_{3}\} = -3z_{2}z_{3}\\ 
\{z_{2},z_{4}\} &= -6z_{3}^3-3z_{2}z_{4}, \hs 
\{z_{2},z_{5}\} = -6z_{3}^2, \hs 
\{z_{2},z_{6}\} = 3z_{2}z_{6}-18z_{3}z_{5}+6z_{4}\\ 
\{z_{3},z_{4}\} &= -3z_{3}z_{4}, \hs 
\{z_{3},z_{5}\} = -z_{3}z_{5}-2z_{4}, \hs 
\{z_{3},z_{6}\} = -6z_{5}^2\\ 
\{z_{4},z_{5}\} &= -3z_{4}z_{5}, \hs 
\{z_{4},z_{6}\} = -6z_{5}^3-3z_{4}z_{6}, \hs 
\{z_{5},z_{6}\} = -3z_{5}z_{6}.
\end{align*}
For the same $\bfu$ but $\gamma =(s_{\alpha_1}, s_{\alpha_2}, e, e, s_{\alpha_1}, e)$, one has
\begin{align*}
\{z_{1},z_{2}\} &= -3z_{1}z_{2}, \hs  
\{z_{1},z_{3}\} = 2z_{2}z_{3}^2+z_{1}z_{3}, \hs
\{z_{1},z_{4}\} = -6z_{2}z_{3}z_{4}+6z_{3}z_{4}^2-3z_{1}z_{4},\\
& \{z_{1},z_{5}\} = -4z_{2}z_{3}z_{5}+6z_{3}z_{4}z_{5}-z_{1}z_{5}-2z_{2}+2z_{4},\\ 
\{z_{1},z_{6}\} &= 6z_{3}z_{5}^3z_{6}^2+6z_{5}^2z_{6}^2+6z_{2}z_{3}z_{6}-6z_{3}z_{4}z_{6}, \hs  
\{z_{2},z_{3}\} = 3z_{2}z_{3},\\ 
\{z_{2},z_{4}\} &= -6z_{2}z_{4}+6z_{4}^2, \hs  
\{z_{2},z_{5}\} = -3z_{2}z_{5}+6z_{4}z_{5},\\ 
\{z_{2},z_{6}\} &= 6z_{5}^3z_{6}^2+3z_{2}z_{6}-6z_{4}z_{6}, \hs 
\{z_{3},z_{4}\} = -3z_{3}z_{4},\hs 
\{z_{3},z_{5}\} = -2z_{3}z_{5}, \\  
\{z_{3},z_{6}\} &= 3z_{3}z_{6}, \hs  
\{z_{4},z_{5}\} = 3z_{4}z_{5}, \hs 
\{z_{4},z_{6}\} = -3z_{4}z_{6}, \hs 
\{z_{5},z_{6}\} = 3z_{5}z_{6}.
\end{align*}
\hfill $\diamond$
}
\end{example}

\begin{example}\label{ex-sl2-s}
{\em Consider $G = SL(2)$ with the only simple root denoted by $\alpha$ and $s = s_\alpha$ and 
$\la \alpha, \, \alpha \ra = 2$. Let $\bfu = (s, s, s, s, s)$.
For $\gamma = \bfu$, one has
\begin{align*}
\{z_{1},z_{2}\} &= 2z_{1}z_{2}-2, \hs  
\{z_{1},z_{3}\} = -2z_{1}z_{3}, \hs  
\{z_{1},z_{4}\} = 2z_{1}z_{4}, \hs  
\{z_{1},z_{5}\} = -2z_{1}z_{5}, \\ 
\{z_{2},z_{3}\} &= 2z_{2}z_{3}-2, \hs 
\{z_{2},z_{4}\} = -2z_{2}z_{4}, \hs  
\{z_{2},z_{5}\} = 2z_{2}z_{5}, \hs  
\{z_{3},z_{4}\} = 2z_{3}z_{4}-2,\\ 
\{z_{3},z_{5}\} &= -2z_{3}z_{5}, \hs  
\{z_{4},z_{5}\} = 2z_{4}z_{5}-2. 
\end{align*}
For $\gamma = (s, e, e, e, s)$, one has
\begin{align*}
\{z_{1},z_{2}\} &= -2z_{1}z_{2}+2z_{2}^2, \hs  
\{z_{1},z_{3}\} = -2z_{1}z_{3}+4z_{2}z_{3}+2z_{3}^2,\\ 
\{z_{1},z_{4}\} &= -2z_{1}z_{4}+4z_{2}z_{4}+4z_{3}z_{4}+2z_{4}^2, \\ 
\{z_{1},z_{5}\} &= 2z_{1}z_{5}-4z_{2}z_{5}-4z_{3}z_{5}-4z_{4}z_{5}-2,\\ 
\{z_{2},z_{3}\} &= 2z_{2}z_{3}, \hs 
\{z_{2},z_{4}\} = 2z_{2}z_{4}, \hs 
\{z_{2},z_{5}\} = -2z_{2}z_{5},\\ 
\{z_{3},z_{4}\} &= 2z_{3}z_{4}, \hs  
\{z_{3},z_{5}\} = -2z_{3}z_{5}, \hs  
\{z_{4},z_{5}\} = -2z_{4}z_{5}.
\end{align*}
In general, it is easy to see from Theorem \ref{th-zizj} that for the sequence $\bfu = (s, s, \ldots, s)$ of length $n$,
and $\gamma = \bfu$, the Poisson bracket $\{\, , \, \}_\gamma$ on $\Zset[z_1, \ldots, z_n]$ is given by
\begin{align*}
&\{z_i, z_{i+1}\} = 2z_iz_{i+1} - 2, \hs 1 \leq i \leq n-1, \\
&\{z_i, z_k\}  = 2(-1)^{k-j+1} z_iz_k, \hs 1 \leq i < k \leq n, \, k-i \geq 2.
\end{align*}
The coefficient $2$ in all the Poisson brackets results from that fact that $\la \alpha, \alpha \ra = 2$.
}
\end{example}

\appendix
\section{Poisson pairs}\label{sec-Poisson-pairs}
In \cite[$\S$8.5]{Lu-Mou:mixed}, 
a {\it Poisson pair} is defined to be a pair of Poisson maps
\begin{equation}\label{eq-rho-YZ}
\rho_\sY: \;(X, \, \piX) \lrw (Y, \,\piY) \hs \mbox{and} \hs  \; \rho_\sZ: \; (X, \,\piX) \lrw (Z, \,\piZ)
\end{equation}
between Poisson manifolds such that 
the  map
\[
(\rho_\sY, \rho_\sZ): \;\; (X, \,\piX) \lrw (Y \times Z, \; \pi_\sY \times \piZ), \;\;\; x \longmapsto
(\rho_\sY(x), \; \rho_\sZ(x)), \hs x \in X,
\]
is Poisson. If $(Y, \piY)$ and $(Z, \piZ)$ are two Poisson manifolds, the projections from the product
Poisson manifold $(Y \times Z, \, \piY \times \piZ)$ to the two factors
clearly form a Poisson pair. Moreover, for a differentiable map $\phi: Y \to Z$, it is well-known
\cite{weinstein:coisotropic} that $\phi: (Y, \piY) \to (Z, \piZ)$ is
anti-Poisson if and only if the graph of $\phi$, i.e., 
\[
{\rm Graph}(\phi) = \{(y, \, \phi(y): \,y \in Y\} \subset Y \times Z,
\]
is a coisotropic submanifold of $(Y \times Z, \; \piY \times \piZ)$. The following
Lemma \ref{le-graph-coisotropic} is a (partial) generalization of this fact to the case of
Poisson pairs.

\begin{lemma}\label{le-graph-coisotropic}
Let $(\rho_{\sY}, \rho_{\sZ})$ be a Poisson pair as in \eqref{eq-rho-YZ}. Suppose that $X'$ is a coisotropic submanifold of
$(X, \piX)$ such that $\rho_\sY|_{\sX'}: X' \to Y$ is a diffeomorphism. Then
\[
\phi = \rho_\sZ \circ (\rho_\sY|_{\sX'})^{-1}:  \;\; (Y, \piY) \lrw (Z, \piZ)
\]
is an anti-Poisson map.
\end{lemma}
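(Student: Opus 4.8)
The plan is to reduce the statement to the criterion, recalled just above in the paper, that a smooth map $\phi\colon Y\to Z$ is anti-Poisson precisely when $\mathrm{Graph}(\phi)$ is a coisotropic submanifold of $(Y\times Z,\;\piY\times\piZ)$, and then to obtain this coisotropicity by pushing the coisotropic submanifold $X'$ forward along the Poisson map $(\rho_\sY,\rho_\sZ)$.

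First I would record the identification $\mathrm{Graph}(\phi)=(\rho_\sY,\rho_\sZ)(X')$. Since $\rho_\sY|_{X'}\colon X'\to Y$ is a diffeomorphism, every $y\in Y$ equals $\rho_\sY(x')$ for a unique $x'\in X'$, and by definition $\phi(y)=\rho_\sZ(x')$; hence $(y,\phi(y))=(\rho_\sY(x'),\rho_\sZ(x'))$, and conversely every such pair arises this way. In particular $\mathrm{Graph}(\phi)$, being the graph of the smooth map $\phi=\rho_\sZ\circ(\rho_\sY|_{X'})^{-1}$, is an embedded submanifold of $Y\times Z$ that coincides with the image of $X'$ under $(\rho_\sY,\rho_\sZ)$.

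The core step is then a general lemma, which I would state and prove in a few lines: if $f\colon(M,\pi)\to(N,\sigma)$ is a Poisson map and $C\subset M$ is coisotropic with $f(C)$ an embedded submanifold $D$ of $N$, then $D$ is coisotropic. Working locally, take functions $g,h$ vanishing on $D$; then $f^{*}g,f^{*}h$ vanish on $C$, so $\{f^{*}g,f^{*}h\}_{\pi}$ vanishes on $C$ because $C$ is coisotropic; since $f$ is Poisson, $\{f^{*}g,f^{*}h\}_{\pi}=f^{*}\{g,h\}_{\sigma}$, whence $\{g,h\}_{\sigma}$ vanishes on $f(C)=D$. Thus the ideal of functions vanishing on $D$ is closed under $\{\,,\,\}_{\sigma}$, i.e.\ $D$ is coisotropic. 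Equivalently, infinitesimally: for $\xi$ in the conormal space of $D$ at $f(x)$ one has $df_x^{*}\xi$ in the conormal space of $C$ at $x$, because $df_x(T_xC)\subseteq T_{f(x)}D$; coisotropy of $C$ gives $\pi^{\sharp}(df_x^{*}\xi)\in T_xC$, and then $\sigma^{\sharp}(\xi)=df_x\bigl(\pi^{\sharp}(df_x^{*}\xi)\bigr)\in df_x(T_xC)\subseteq T_{f(x)}D$.

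To finish, I would apply this lemma with $f=(\rho_\sY,\rho_\sZ)\colon(X,\piX)\to(Y\times Z,\piY\times\piZ)$ — which is a Poisson map exactly because $(\rho_\sY,\rho_\sZ)$ is a Poisson pair — together with $C=X'$ and $D=\mathrm{Graph}(\phi)$, concluding that $\mathrm{Graph}(\phi)$ is coisotropic in $(Y\times Z,\piY\times\piZ)$ and hence, by the cited criterion, that $\phi$ is anti-Poisson. I do not expect a serious obstacle: the only subtlety is that the general lemma needs $f(C)$ to be a submanifold, which here is automatic since that image is a graph, and if one argues with ideals of vanishing functions one should simply work locally, as $D$ and $X'$ are not assumed closed.
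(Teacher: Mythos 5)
Your proof is correct, but it takes a genuinely different route from the paper's. You reduce the statement to Weinstein's characterization (recalled in the paper just before the lemma) that $\phi$ is anti-Poisson if and only if $\mathrm{Graph}(\phi)$ is coisotropic in $(Y\times Z,\;\piY\times\piZ)$, identify $\mathrm{Graph}(\phi)$ with the image $(\rho_\sY,\rho_\sZ)(X')$, and then invoke the general fact that a Poisson map carries a coisotropic submanifold onto a coisotropic submanifold whenever the image is an embedded submanifold; your infinitesimal justification of that fact, namely $\sigma^{\sharp}(\xi)=df_x\bigl(\pi^{\sharp}(df_x^{*}\xi)\bigr)\in df_x(T_xC)\subseteq T_{f(x)}D$ for $\xi$ conormal to $D$, is airtight and is preferable to the ideal-theoretic version when the submanifolds are not closed. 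The paper instead argues pointwise with bivectors: it reduces to the auxiliary linear-algebra Lemma \ref{le-linear-algebra}, where the coisotropy of $U=T_xX'$ and the splitting $V=U\oplus\ker\rho_{\sY,x}$ allow one to write $\piX(x)=-\pi_{\sU}+\sum_i u_i\wedge x_i$ with each $x_i\in\ker\rho_{\sZ,x}$, from which $\phi_*(\piY(y))=-\piZ(z)$ follows directly. Your argument is shorter and more conceptual modulo the two standard facts it cites; the paper's is self-contained, and its linear-algebra lemma isolates exactly which part of the Poisson-pair hypothesis is used (only the vanishing of the mixed term $\piX(V_1^0,V_2^0)$, not that $\rho_\sY$ and $\rho_\sZ$ are individually Poisson). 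Both proofs are valid.
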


\begin{proof}
Fix $x \in X'$ and let $\rho_\sY(x) = y$ and $z = \rho_\sZ(x) \in Z$. Let 
\[
\rho_{\sY, x}: \;\; T_xX \lrw T_yY \hs \mbox{and} \hs 
\rho_{\sZ, x}: \;\; T_xX \lrw T_zZ
\]
be respectively the
differentials of $\rho_\sY$ and $\rho_\sZ$ at $x$. Lemma \ref{le-graph-coisotropic}
now follows from  the following Lemma \ref{le-linear-algebra} by 
taking
$(V, \pi) = (T_xX, \piX(x))$, $V_1 = \ker \rho_{\sY, x}$, $V_2 = \ker \rho_{\sZ, x}$, and $U = T_xX'$.
\end{proof}

In the following Lemma \ref{le-linear-algebra}, for a finite dimensional vector space $V$ 
and a subspace $U_1 \subset V$, set $U_1^0 = \{\xi \in V^*: \xi|_{{\scriptscriptstyle U}_1} = 0\} \subset V^*$, and $U_1$ is said to be 
coisotropic with respect to $\pi \in \wedge^2 V$ if
 $\pi \in U_1 \wedge V$, where for any subspace $U_2$ of $V$, 
\[
U_1 \wedge U_2 = (\wedge^2 V) \cap (U_1 \otimes U_2 + U_2 \otimes U_1) \subset \wedge^2 V.
\]

\begin{lemma}\label{le-linear-algebra}
Let $V$ be a finite dimensional vector space, let $\pi \in \wedge^2 V$, and let $V_1$ and $V_2$
be two vector subspaces of $V$ such that $\pi(V_1^0, V_2^0) = 0$. For $j = 1, 2$, let $\rho_j: V \to V/V_j$ 
be the projections so that $\rho_j(\pi) \in \wedge^2 (V/V_j)$. 
Assume that $U$ is a coisotropic subspace of $V$ and that $\rho_1|_{{\scriptscriptstyle U}}: U \to V/V_1$ is an isomorphism. 
Let $\psi = \rho_2 \circ (\rho_1|_{{\scriptscriptstyle U}})^{-1}: V/V_1 \to V/V_2$. Then 
$\psi(\rho_1(\pi)) = -\rho_2(\pi)$.
\end{lemma}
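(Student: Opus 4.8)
The plan is to work entirely at the level of linear algebra, using bilinear forms rather than bivectors wherever possible. First I would unwind the objects: $\pi \in \wedge^2 V$ induces a linear map $\pi^\sharp : V^* \to V$ by $\langle \xi, \pi^\sharp(\eta)\rangle = \pi(\xi, \eta)$, and for $j = 1, 2$ the pushed-forward bivector $\rho_j(\pi) \in \wedge^2(V/V_j)$ has sharp map fitting into the commuting square $\rho_j \circ \pi^\sharp = (\rho_j(\pi))^\sharp \circ \rho_j^*$, where $\rho_j^* : (V/V_j)^* \to V^*$ is the inclusion with image exactly $V_j^0$. The hypothesis $\pi(V_1^0, V_2^0) = 0$ says precisely $\pi^\sharp(V_1^0) \subseteq (V_2^0)^0 = V_2$, or equivalently $\rho_2 \circ \pi^\sharp|_{V_1^0} = 0$; symmetrically $\rho_1 \circ \pi^\sharp|_{V_2^0} = 0$. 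These are the two identities I expect to carry the whole argument.

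Next I would set up the map $\psi = \rho_2 \circ (\rho_1|_U)^{-1}$. Since $\rho_1|_U : U \to V/V_1$ is an isomorphism, I pick for each $\bar v \in V/V_1$ its unique lift $\tilde v \in U$ with $\rho_1(\tilde v) = \bar v$; then $\psi(\bar v) = \rho_2(\tilde v)$. To compare $\psi(\rho_1(\pi))$ with $-\rho_2(\pi)$, I would evaluate both sides on an arbitrary pair $(\xi, \eta) \in (V/V_2)^* \times (V/V_2)^*$, identifying $(V/V_2)^*$ with $V_2^0 \subset V^*$. Unwinding, $(\psi(\rho_1(\pi)))(\xi, \eta) = (\rho_1(\pi))(\psi^*\xi, \psi^*\eta)$ and $\psi^* : V_2^0 \to V_1^0$ is the transpose; so the task reduces to showing, for all $\xi, \eta \in V_2^0$, that $(\rho_1(\pi))(\psi^*\xi, \psi^*\eta) = -\pi(\xi, \eta)$. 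Here is where coisotropy of $U$ enters: $U$ coisotropic means $\pi \in U \wedge V$, equivalently $\pi^\sharp(U^0) \subseteq U$. The key computation is to express $\psi^*\xi \in V_1^0$ concretely. For $\xi \in V_2^0 = (V/V_2)^*$, by definition $\langle \psi^*\xi, \bar v\rangle = \langle \xi, \psi(\bar v)\rangle = \langle \xi, \rho_2(\tilde v)\rangle = \langle \xi, \tilde v\rangle$ for the lift $\tilde v \in U$; so $\psi^*\xi$ is the element of $V_1^0$ that agrees with $\xi$ on $U$. Thus $\xi - \psi^*\xi \in V^*$ vanishes on $U$, i.e. $\xi - \psi^*\xi \in U^0$.

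With that identity in hand, the endgame is a short expansion. Write $\xi = \psi^*\xi + (\xi - \psi^*\xi)$ with $\psi^*\xi \in V_1^0$ and $\xi - \psi^*\xi \in U^0$, and similarly for $\eta$. Then
\[
\pi(\xi, \eta) = \pi(\psi^*\xi, \psi^*\eta) + \pi(\psi^*\xi, \eta - \psi^*\eta) + \pi(\xi - \psi^*\xi, \eta).
\]
In the last term, $\xi - \psi^*\xi \in U^0$ so $\pi^\sharp(\xi - \psi^*\xi) \in U$ by coisotropy; but it also lies in $\pi^\sharp(V_2^0) \subseteq V_1$ (using $\rho_1 \circ \pi^\sharp|_{V_2^0} = 0$, since $\xi - \psi^*\xi = \xi - \psi^*\xi$ with $\xi \in V_2^0$ and $\psi^*\xi \in V_1^0 \cap$ ... ) — I need to be a little careful here, so let me instead organize the three terms differently: note $\psi^*\xi \in V_1^0$ pairs with $\pi^\sharp$ of anything in a way controlled by $\rho_1 \circ \pi^\sharp|_{V_2^0} = 0$ only when the other argument is in $V_2^0$, and $\xi - \psi^*\xi \in U^0$ is controlled by coisotropy. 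So the clean route is: $\pi(\xi, \eta) = \langle \xi, \pi^\sharp \eta\rangle$; split $\eta = \psi^*\eta + (\eta - \psi^*\eta)$; then $\pi^\sharp(\eta - \psi^*\eta) \in U$ (coisotropy, as $\eta - \psi^*\eta \in U^0$) and separately $\pi^\sharp(\psi^*\eta) \in V_2$ (hypothesis, as $\psi^*\eta \in V_1^0$, using $\rho_2 \circ \pi^\sharp|_{V_1^0} = 0$), hence $\langle \xi, \pi^\sharp(\psi^*\eta)\rangle = 0$ since $\xi \in V_2^0$. Therefore $\pi(\xi, \eta) = \langle \xi, \pi^\sharp(\eta - \psi^*\eta)\rangle = \langle \psi^*\xi, \pi^\sharp(\eta - \psi^*\eta)\rangle$ because $\xi$ and $\psi^*\xi$ agree on $U \supseteq \pi^\sharp(U^0) \ni \pi^\sharp(\eta - \psi^*\eta)$. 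Now split $\eta - \psi^*\eta = \eta - \psi^*\eta$ back the other way: $\pi^\sharp(\eta) \in V_2 = (V_2^0)^0$... here the bookkeeping is that $\langle \psi^*\xi, \pi^\sharp \eta\rangle$ with $\psi^*\xi \in V_1^0$ need not vanish, but $\langle \psi^*\xi, \pi^\sharp(\psi^*\eta)\rangle = (\rho_1(\pi))(\psi^*\xi, \psi^*\eta)$ by the pushforward square. Collecting signs via antisymmetry of $\pi$ yields $\pi(\xi,\eta) = -(\rho_1(\pi))(\psi^*\xi, \psi^*\eta)$, which is the claim.

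The main obstacle I anticipate is exactly this sign-and-subspace bookkeeping in the last step: keeping straight which of the two hypotheses ($\pi(V_1^0, V_2^0) = 0$ versus coisotropy of $U$) kills which cross-term, and not double-counting. I would handle it by fixing the convention $\pi^\sharp$ once and only ever expanding $\pi(\xi,\eta) = \langle \xi, \pi^\sharp\eta\rangle$, never expanding both arguments simultaneously, so that each reduction is a single substitution justified by a single containment. Once the identity $\psi^*\xi \equiv \xi$ on $U$ and the two containments $\pi^\sharp(V_1^0) \subseteq V_2$, $\pi^\sharp(U^0) \subseteq U$ are recorded as displayed facts, the computation is three lines.
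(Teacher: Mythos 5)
Your argument is correct, and it takes a genuinely different route from the paper's. You dualize: you test both bivectors against covectors $\xi,\eta$ in $V_2^0\cong (V/V_2)^*$, reduce the claim to the identity $\pi(\psi^*\xi,\psi^*\eta)=-\pi(\xi,\eta)$, and prove it from three recorded facts --- $\psi^*\xi$ agrees with $\xi$ on $U$ (so $\xi-\psi^*\xi\in U^0$), $\pi(V_1^0,V_2^0)=0$, and $\pi(U^0,U^0)=0$ from coisotropy. The cleanest packaging of your endgame is the four-term bilinear expansion
\[
\pi(\xi,\eta)=\pi(\xi-\psi^*\xi,\,\eta-\psi^*\eta)+\pi(\xi-\psi^*\xi,\,\psi^*\eta)+\pi(\psi^*\xi,\,\eta-\psi^*\eta)+\pi(\psi^*\xi,\,\psi^*\eta),
\]
where the first term dies by coisotropy, and in each cross term the piece $\pi(\xi,\psi^*\eta)$ resp.\ $\pi(\psi^*\xi,\eta)$ dies by the hypothesis $\pi(V_1^0,V_2^0)=0$, leaving $0-\pi(\psi^*\xi,\psi^*\eta)-\pi(\psi^*\xi,\psi^*\eta)+\pi(\psi^*\xi,\psi^*\eta)=-\pi(\psi^*\xi,\psi^*\eta)$; your single-substitution version via $\pi^\sharp$ arrives at the same place. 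The paper instead works directly with the bivector: it chooses a basis $\{u_i\}$ of $U$ with dual covectors $\xi_i\in V_1^0$, writes $\pi=\pi_{\sU}+\pi_1$ with $\pi_{\sU}\in\wedge^2U$ and $\pi_1\in U\wedge V_1$, and rearranges to $\pi=-\pi_{\sU}+\sum_i u_i\wedge(\xi_i\rfloor\pi)$ with $\xi_i\rfloor\pi\in V_2$, so that $\rho_2(\pi)=-\rho_2(\pi_{\sU})=-\psi(\rho_1(\pi))$. Your version is basis-free and makes the origin of the minus sign more transparent (it comes from the two cross terms overcounting the $\pi(\psi^*\xi,\psi^*\eta)$ contribution); the paper's version keeps everything in $\wedge^2V$ and avoids introducing the adjoint $\psi^*$. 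The only cosmetic issue with your write-up is the meandering middle passage where you reorganize the computation twice; the final organized argument is complete and needs no repair.
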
 

\begin{proof}
For  $\pi' = \sum_j v_j \wedge v_j^\prime \in \wedge^2V$ and $\xi \in V^*$, let
$\xi \rfloor \pi' = \sum_j (\la \xi, v_j\ra v_j^\prime - \la \xi, v_j^\prime\ra v_j)$, 
where $\lara$ denotes the pairing between $V$ and $V^*$. Then the condition $\pi(V_1^0, V_2^0) = 0$ is equivalent to
$\xi \rfloor \pi \in V_2$ for all $\xi \in V_1^0$. 
By assumption, $V = U + V_1$ is a 
direct sum. As $U$ is coisotropic with respect to $\pi$, one can uniquely write $\pi =\pi_{\sU} + \pi_1$, where $\pi_{\sU} \in \wedge^2 U$ and  $\pi_1 \in U \wedge V_1$.
Let $\{u_1, \ldots, u_m\}$ be a basis of $U$ and let $\xi_i \in V_1^0$, $1 \leq i \leq m$, be such that $\la u_i, \xi_j\ra = \delta_{i,j}$ for $1\leq i, j \leq m$.
Then 
\[
\pi_\sU = \frac{1}{2} \sum_{i=1}^m u_i \wedge (\xi_i \rfloor \pi_\sU) \hs \mbox{and} \hs
\pi_1 =\sum_{i=1}^m u_i \wedge (\xi_i \rfloor \pi_1).
\]
For $1 \leq i \leq m$, let $x_i = \xi_i \rfloor \pi = \xi_i \rfloor (\pi_\sU + \pi_1)$. Then
\begin{align*}
\pi &= \frac{1}{2} \sum_{i=1}^m u_i \wedge (\xi_i \rfloor \pi_\sU) + \sum_{i=1}^m u_i \wedge (\xi_i \rfloor (\pi_\sU +\pi_1))
-\sum_{i=1}^m u_i \wedge (\xi_i \rfloor \pi_\sU)\\
& = -\frac{1}{2} \sum_{i=1}^m u_i \wedge (\xi_i \rfloor \pi_\sU) + \sum_{i=1}^m u_i \wedge x_i = -\pi_\sU + 
\sum_{i=1}^m u_i \wedge x_i.
\end{align*}
As $x_i \in V_2$ for each $1 \leq i \leq m$,  $\rho_2(\sum_{i=1}^m u_i \wedge x_i) = 0$, so
$\psi(\rho_1(\pi)) = -\rho_2(\pi)$.
\end{proof}

\end{document}